\DeclareMathOperator{\essup}{\rm essup}
\newcommand{\abs}[1]{\lvert#1\rvert}
\newcommand{\norm}[1]{\lVert#1\rVert}
\newcommand{\R}{\mathbb{R}} 
\newcommand{\NN}{\mathbb{N}}
\newcommand{\QW}{\mathbb{Q}}
\newcommand{\E}{\mathbb{E}}
\newcommand{\ind}{\mathbf{1}}
\newcommand{\Ind}{\mathds{1}}
\newcommand{\PW}{\mathbb{P}}
\newcommand{\N}{\mathbb{N}}
\newcommand{\Borel}{\mathbb{B}}
\newcommand{\Linfty}{L^{\infty}(\Omega, \Fcal, \PW)}
\newcommand{\LinftyG}{L^{\infty}(\Omega, \Gcal, \PW)}
\newcommand{\brv}{\mathcal{L}^{\infty}(\Omega,\mathcal{F})}
\newcommand{\brvG}{\mathcal{L}^{\infty}(\Omega,\mathcal{G})}
\newcommand{\hPhi}{\widehat{\Phi}}
\newcommand{\hg}{\widehat{g}}
\newcommand{\tg}{\widetilde{g}}
\newcommand{\Ep}[1]{\E_\PW\left[#1\right]}
\newcommand{\Fcal}{{\mathcal F}}
\newcommand{\Gcal}{{\mathcal G}}
\newcommand{\Lcal}{{\mathcal L}}
\newcommand{\Ncal}{{\mathcal N}}
\newcommand{\Scal}{{\mathcal S}}
\newcommand{\Xcal}{{\mathcal X}}
\newcommand{\Tcal}{{\mathcal T}}
\newcommand{\Ucal}{{\mathcal U}}
\newcommand{\cm}[1]{\mathfrak{m}\left(#1\right)}
\newcommand{\snorm}[1]{\left\|#1\right\|_{\infty}}
\newcommand{\ale}[1]{{\color{red}#1}}
\newtheorem{theorem}{Theorem}[section]
\newtheorem{proposition}[theorem]{Proposition}
\newtheorem{corollary}[theorem]{Corollary}
\newtheorem{lemma}[theorem]{Lemma}
\newtheorem{assumption}[theorem]{Assumption}
\newtheorem{remark}[theorem]{Remark}
\newtheorem{definition}[theorem]{Definition}
\newtheorem{notation}[theorem]{Notation}
\begin{document}

\title{On Conditional Chisini Means and Risk Measures}
\date{\today}
\author{Alessandro Doldi\thanks{%
Dipartimento di Matematica, Universit\`{a} degli Studi di Milano, Via
Saldini 50, 20133 Milano, Italy, $\,\,$\emph{alessandro.doldi@unimi.it}. }
\and Marco Maggis\thanks{%
Dipartimento di Matematica, Universit\`a degli Studi di Milano, Via Saldini
50, 20133 Milano, Italy, \emph{marco.maggis@unimi.it}. }}
\maketitle

\begin{abstract}
\noindent Given a real valued functional $T$ on the space of bounded random variables, we investigate the problem of the existence of a conditional version of nonlinear means. We follow a seminal idea by Chisini (1929), defining a \textit{mean} as the solution of a functional equation induced by $T$. We provide sufficient conditions which guarantee the existence of a (unique) solution of a system of infinitely many functional equations, which will provide the so called Conditional Chisini mean. We apply our findings in characterizing the scalarization of conditional Risk Measures, an essential tool originally adopted by Detlefsen and Scandolo (2005) to deduce the robust dual representation. 
\end{abstract}

\noindent \textbf{Keywords}: Nonlinear expectation; Conditional Expectation; Generalized mean; Risk Measures; Scalarization Method.

\parindent=0em \noindent
\section{Introduction}

In Probability Theory and Statistics the concept of \textit{mean} as introduced by Chisini \cite{Ch29} can be expressed as follows: given a vector $(x_1,\ldots,x_n)\in\R^n$ and a non-decreasing (with respect to component-wise order) function $T:\R^n\to\R$ the \textit{mean} is the value $\mathfrak{m} \in\R$ which solves the functional equation
\[T(x_1,x_2,\ldots,x_n)=T(\mathfrak{m},\mathfrak{m}\ldots,\mathfrak{m}).\]
Simple examples can be obtained by choosing strictly increasing $G,F:\R\to\R$ and
\[T(x_1,x_2,\ldots,x_n)= F\left(\sum_{i=1}^n G(x_i)\right).\]
In this case the unique solution
\[\mathfrak{m}=G^{-1}\left(\frac{G(x_1)+\ldots+G(x_n)}{n}\right)\] 
is independent from the choice of $F$ and $\mathfrak{m}$ is usually referred as Generalized Mean (see \cite{MP93} for a review). 

\medskip

The notion of \textit{mean} is crucial as soon as we extend the view of the problem to infinite dimensional spaces of random variables both in the classical linear case (e.g. the expected value which defines the $L^1$ space of integrable random variables) and in the nonlinear case  (see \cite{DenisHuPeng11,DenisMartini06} among the others). For this reason it is natural to consider a measurable space $(\Omega,\Fcal)$ where $\Omega$ is the set of all possible events and $\Fcal$ is a $\sigma$-algebra. \\ Let $\mathcal{X}$ be a vector space of  measurable random variables $f:\Omega\to\R$, such that  for any $A\in \Fcal$ the indicator function $\ind_A$ belongs to $ \mathcal{X}$, and $f\ind_A\in \mathcal{X}$ whenever $f\in\mathcal{X}$. 

\begin{definition}\label{standard:CM}
Given a functional $T: \mathcal{X}\to \R$ the Chisini mean  of $f\in \mathcal{X}$ will be given by $\mathfrak{m}(f)\in \R$ being solution of the functional equation 
\[T(f)=T(\mathfrak{m}(f)\ind_{\Omega}).\]
\end{definition}



\medskip

In this paper we shall concentrate our analysis on the space of bounded $\Fcal$-measurable random variables. We work in a model independent framework, in that we do not assume the knowledge of a probability measure $\PW$ on $(\Omega,\Fcal)$ \textit{a priori}. Our aim is to provide sufficient conditions for the existence of the conditional Chisini mean. More precisely: given a functional $T:\mathcal{X}\to \R$ and a $\sigma$-algebra $\Gcal\subseteq \Fcal$, we look for a $\Gcal$-measurable bounded random variable $g$ such that
\begin{equation}\label{functional:equation}
    T(g\ind_A)=T(f\ind_A) \quad \forall A\in \Gcal. 
\end{equation}
Under suitable assumptions on $T$ the solution will be unique only up to irrelevant events for $T$ (rigorously defined later) and the class of solutions will represent the conditional Chisini mean, which we will denote by $\cm{f| \Gcal}$ (see Definition \ref{CCM}).   

Equation \eqref{functional:equation} is a natural extension of the standard definition of conditional expectation. For a given probability space $(\Omega,\Fcal,\PW)$ the simplest example of Chisini mean is obtained considering $T$ as the Lebesgue integral of $f\in \mathcal{X}$ under $\PW$, namely $T(f):= E_{\PW}[f]$ and in this trivial case $\cm{f}$ coincides with $E_{\PW}[f]$. Moreover if $\Gcal$ is a sub $\sigma$-algebra of $\Fcal$ and $E_{\PW}[f| \Gcal]$ denotes the conditional expectation of $f$ given $\Gcal$, any version $g\in E_{\PW}[f| \Gcal]$ solves the system of infinitely many equations \eqref{functional:equation} with $T(f)=E_{\PW}[f]$.

One of the most celebrated results related to our research is the Nagumo-de Finetti-Kolmogorov Theorem (\cite{Na31,deF31,Ko30}), which provides an integral characterization of generalized means on finitely supported distributions. In \cite[Lemma 5.2]{CVMM} the Nagumo-de Finetti-Kolmogorov Theorem is extended to functionals defined on bounded random variables. In particular \cite[Lemma 5.2]{CVMM} leads to an immediate solution to \eqref{functional:equation} for a large class of functionals $T$ of the form $\cm{f\middle| \Gcal}= U^{-1}E_{\PW}[U(f)| \Gcal]$  for an increasing $U:\R\to\R$. The proof of \cite[Lemma 5.2]{CVMM} relies on restrictive assumptions like $(\Omega,\Fcal,\PW)$ being a non atomic probability space and $T$ satisfying $\PW$-law invariance \footnote{A functional $T$ is $\PW$-law invariant if it depends only on the laws of random variables i.e. $\PW(f\leq z)=\PW(g\leq z)$ for any $z\in\R$ implies $T(f)=T(g)$}. Our analysis diverges significantly from the Nagumo-de Finetti-Kolmogorov integral representation as we shall drop completely the $\PW$-law invariancy of $T$ and relax significantly the other assumptions (for example we shall not need an atomless measure space).

In the realm of Nonlinear Expectations, another significant example is the case of $g$-expectations (see \cite{CoquetPeng02} for a definition via \eqref{functional:equation} and  \cite{Peng04} for an exhaustive review). Given a filtered probability space $(\Omega,\Fcal,\{\Fcal_t\}_{t\in[0,T]},\PW)$, a $g$-expectation is loosely speaking a nonlinear functional which associates to a random variable $f$ the value of the solution of a Backward Stochastic Differential Equation (with driver $g$) at time $t$, i.e. $f_t=\mathcal{E}_t(f)$. If the solution of the BSDE exists unique then $f_t$ solves again the functional equation 
$\mathcal{E}_0(f\ind_A)= \mathcal{E}_0(f_t\ind_A)$ for any $A\in\Fcal_t$. Coquet and Peng \cite{CoquetPeng02} pursue an axiomatic approach to determine those families of functionals $\{\mathcal{E}_t\}_{t\in [0,T]}$ which can be represented as g-expectations. Their results lead to a solution of the conditional Chisini mean problem depicted so far. Nevertheless, it is important to observe that differently from \cite{CoquetPeng02} and all the related literature our framework does not require a Brownian underlying structure which is necessary to develop the theory of BSDEs, and for this reason we can avoid technical hypotheses like the so-called $\mathcal{E}_{\mu}$-domination.  

\section{Notations and preliminaries} \label{preliminaries}
For any given $\sigma$-algebra $\Gcal\subseteq
\Fcal$ we denote by $\mathcal{L}(\Omega,\Gcal)$ the space of
$\Gcal$-measurable functions taking values in $\R$, which will be always endowed with the Borel $\sigma$-algebra $\Borel_{\R}$. We shall usually refer to elements $f\in \mathcal{L}(\Omega,\Gcal)$ as random variables and denote by $\mathcal{L}^{\infty}(\Omega,\Gcal)$ its
subspace collecting bounded elements i.e. $f\in \mathcal{L}(\Omega,\Gcal)$ such that $|f(\omega)|\leq k$ for every $\omega\in\Omega$ and some $k\geq 0$. On $\mathcal{L}(\Omega,\Gcal)$ and
$\mathcal{L}^{\infty}(\Omega,\Gcal)$ we shall consider the usual
pointwise order $f\leq g$ if and only if $f(\omega)\leq g(\omega)$
for every $\omega\in \Omega$. 
$\mathcal{L}^{\infty}(\Omega,\Gcal)$
endowed with the sup norm $\|\cdot\|_{\infty}$ becomes a Banach
lattice, where $\|f \|_{\infty}= \sup_{\omega\in
\Omega}|f(\omega)|$. By $\ind_A$, $A\in\Gcal$ we indicate
the element of $\mathcal{L}^{\infty}(\Omega,\Gcal)$ such that
$\ind_A(\omega)=1$ if $\omega\in A$ and $0$ otherwise. 
Finally we shall denote by $\Scal(\Gcal)$ the subspace of simple functions in $\brvG$.

Whenever a probability $\PW$ is given $(\Omega,\Fcal,\PW)$
becomes a measure space and, as usual, we shall say that a
probability $\widetilde{\PW}$ is dominated by $\PW$
($\widetilde{\PW}\ll \PW$) if $\PW(A)=0$ implies
$\widetilde{\PW}(A)=0$ for any $A\in \Fcal$. Similarly a probability
$\widetilde{\PW}$ is equivalent to $\PW$
($\widetilde{\PW}\sim \PW$) if $\PW\ll\widetilde{\PW}$ and
$\widetilde{\PW}\ll \PW$. A property holds $\PW$ almost
surely ($\PW$-a.s.), if the set  where it fails is measurable and has $0$
probability.
\\For any given $\sigma$-algebra
$\Gcal\subseteq \Fcal$ we shall denote with $L^{0}(\Omega,\mathcal{G},\mathbb{P})$ the space of equivalence classes of
$\mathcal{G}$ measurable random variables that are $\mathbb{P}$
almost surely equal and by $L^{\infty}(\Omega
,\mathcal{G},\mathbb{P})$ the subspace of ($\PW$ a.s.) bounded
random variables. Formally any $f\in \mathcal{L}(\Omega,\Gcal)$ will be a representative of the class $X:=[f]_{\PW}\in L^{0}(\Omega,\Gcal,\PW)$. For $X,Y\in\Linfty$ we write $X\leq Y\,\PW-$a.s. for the usual a.s. ordering. Moreover, the essential ($\mathbb{P}$ a.s.)
\textit{supremum} of an arbitrary family of random variables
$\{X_{\lambda}\}_{\lambda\in\Lambda}\subseteq L^{0}(\Omega
,\mathcal{G},\mathbb{P})$ will be simply denoted by $\essup_\PW
\{X_{\lambda }\mid \lambda\in\Lambda\}$, and similarly for the
essential \textit{infimum} (see \cite[Section A.5]{FS11} for
details). 

\medskip

We now state a simple result for the existence and uniqueness of Chisini means as in Definition \ref{standard:CM}. It is interesting to notice that existence and uniqueness depend only on the regularity properties of the restriction of the functional $T$ on constant random variables.   

\begin{proposition}\label{unconditional}
Assume that $T:\brv\to \R$ is such that $T(-\snorm{f})\leq T(f)\leq T(\snorm{f})$ and the function $\R\ni a\mapsto T(a\ind_{\Omega})$ is continuous and strictly increasing. Then for any $f \in\brv$ there exists a unique $\mathfrak{m}\in\R$ such that $T(f)=T(\mathfrak{m}\ind_{\Omega})$.
\end{proposition}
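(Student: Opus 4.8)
The plan is to collapse the functional equation to a one–dimensional problem governed by the auxiliary function $\phi\colon\R\to\R$ defined by $\phi(a):=T(a\ind_{\Omega})$. By hypothesis $\phi$ is continuous and strictly increasing, so it is a bijection onto its image, which is an interval; the Chisini equation $T(f)=T(\mathfrak{m}\ind_{\Omega})$ then reads simply as $\phi(\mathfrak{m})=T(f)$, and everything reduces to showing that $T(f)$ lies in the range of $\phi$ and that the preimage is a single point. The only notational care needed is to read, exactly as the statement does, the scalars $\pm\snorm{f}$ as the constant random variables $\pm\snorm{f}\ind_{\Omega}$.

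First I would record the elementary sandwich. Any $f\in\brv$ satisfies $-\snorm{f}\ind_{\Omega}\leq f\leq\snorm{f}\ind_{\Omega}$ pointwise, and the standing assumption $T(-\snorm{f})\leq T(f)\leq T(\snorm{f})$ translates into
\[
\phi(-\snorm{f})\;\leq\;T(f)\;\leq\;\phi(\snorm{f}).
\]
This is the crucial input: it confines the value $T(f)$ to the interval $[\phi(-\snorm{f}),\phi(\snorm{f})]$, which is precisely what lets continuity of $\phi$ on a \emph{compact} set do the work. For existence I would then apply the Intermediate Value Theorem: $\phi$ is continuous on the compact interval $[-\snorm{f},\snorm{f}]$, so its image on that interval equals $[\phi(-\snorm{f}),\phi(\snorm{f})]$, and the displayed inequalities place $T(f)$ in this image. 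Hence there is some $\mathfrak{m}\in[-\snorm{f},\snorm{f}]$ with $\phi(\mathfrak{m})=T(f)$, i.e. $T(\mathfrak{m}\ind_{\Omega})=T(f)$, which is the equation of Definition \ref{standard:CM}.

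For uniqueness I would invoke strict monotonicity alone: if $\phi(\mathfrak{m}_1)=\phi(\mathfrak{m}_2)=T(f)$, then $\mathfrak{m}_1=\mathfrak{m}_2$ since a strictly increasing function is injective. I do not anticipate a genuine obstacle here, as the argument is nothing more than the Intermediate Value Theorem combined with injectivity of a strictly monotone map. The single point deserving attention is that the sandwich hypothesis is not decorative but essential: it is exactly the condition guaranteeing that $T(f)$ falls inside the image of $\phi$ restricted to $[-\snorm{f},\snorm{f}]$, without which $T(f)$ could fail to be attained by $\phi$ at all and no Chisini mean would exist.
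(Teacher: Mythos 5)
Your proof is correct and follows essentially the same route as the paper: both reduce the problem to the restriction $a\mapsto T(a\ind_{\Omega})$, use continuity together with the sandwich $T(-\snorm{f})\leq T(f)\leq T(\snorm{f})$ to get existence, and strict monotonicity for uniqueness. The only cosmetic difference is that you invoke the Intermediate Value Theorem on the compact interval $[-\snorm{f},\snorm{f}]$, while the paper phrases the same fact as a connectedness argument with the two closed sets $\{a\mid T(a\ind_{\Omega})\geq T(f)\}$ and $\{a\mid T(a\ind_{\Omega})\leq T(f)\}$ covering $\R$.
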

\begin{proof} Continuity of the restriction guarantees that the sets $\{a\in\R\mid T(a\ind_{\Omega})\geq T(f)\}$ and $\{a\in\R\mid T(a\ind_{\Omega})\leq T(f)\}$ are closed. Moreover
$T(-\snorm{f})\leq T(f)\leq T(\snorm{f})$ implies that the two sets are non empty. Finally monotonicity implies that their union is the real line $\R$. Therefore their intersection is non empty and reduced to a singleton due to strict monotonicity of $T$ on constant random variables. 

\end{proof}

\begin{definition} 
For a given $T:\brv\to \R$ and any $\sigma$-algebra $\Gcal\subseteq \Fcal$ we introduce the class of irrelevant (or null) events in $\Gcal$ for the functional $T$ as
\begin{equation}
    \label{def:ncal initial}
    \Ncal_\Gcal=\{N\in\Gcal\mid T(g_1+g_2\ind_N)=T(g_1) \; \forall  g_1,g_2\in\brvG\}.
\end{equation}
We now list the key properties $T$ might enjoy, which will play a central role in the achievement of our scope.
\begin{description} 
    
    \item[($\Gcal$-Mo)] $T$ is $\Gcal$-monotone if, for all $x,y\in\R$ with $x<y$, all $g\in\brvG$ and all $A\in\Gcal\setminus\Ncal_\Gcal$, we have $$T(x\ind_A+g\ind_{\Omega\setminus A})< T(y\ind_A+g\ind_{\Omega\setminus A});$$
    
    \item[($\Gcal$-QL)] $T$ is $\Gcal$-quasilinear if, given any $g_1,g_2\in\brvG$ and $A\in\Gcal$,  $T(g_1\ind_A+\bar g\ind_{\Omega\setminus A})\leq T(g_2\ind_A+\bar g\ind_{\Omega\setminus A})$ for \textbf{some} $\bar g\in\brvG$ implies $T(g_1\ind_A+g\ind_{\Omega\setminus A})\leq T(g_2\ind_A+g\ind_{\Omega\setminus A})$ for \textbf{all}  $g\in \brvG$\footnote{As an immediate consequence, under ($\Gcal$-QL) alone, and given $g_1,g_2\in\brvG$, if we have $T(g_1\ind_A+\bar g\ind_{\Omega\setminus A})= T(g_2\ind_A+\bar g\ind_{\Omega\setminus A})$ for some $\bar g\in\brvG$, then $T(g_1\ind_A+g\ind_{\Omega\setminus A})= T(g_2\ind_A+g\ind_{\Omega\setminus A})$ for all  $g\in \brvG$: it is indeed enough to interchange the roles of $g_1,g_2$ in exploiting the assumption. };
    
    \item[($\Gcal$-PC)] $T$ is $\Gcal$-pointwise continuous if, for every norm bounded sequence $\{g_n\}\subseteq \brvG$ such that $g_n(\omega)\to g(\omega)$ for all $\omega\in\Omega$, we have $\lim_n T(g_n)=T(g)$.

\end{description}
For a \textbf{fixed} $f\in\brv$, we also introduce the following properties:
\begin{description}
    \item[($\Gcal$-PS)] $T$ is $\Gcal$- pasting at $f$ if, given $A_1,A_2\in\Gcal$ with $A_1\cap A_2=\emptyset$ and $x_1,x_2\in\R$ satisfying $T(f\ind_{A_i})=T(x_i\ind_{A_i})$  for $i=1,2$, then $$T(f\ind_{A_1}+f\ind_{A_2})= T(x_1\ind_{A_1}+x_2\ind_{A_2}).$$
    \item[($\Gcal$-NB)] $T$ is $\Gcal$-norm bounded at $f$ if $T(-\snorm{f}\ind_A)\leq T(f\ind_A)\leq T(\snorm{f}\ind_A)$ for every $A\in\Gcal$.
\end{description}
\end{definition}


In the previous definition we need to stress the dependence on the $\sigma$-algebra $\Gcal$, as the latter can range from  $\Gcal=\{\emptyset,\Omega\}$ to $\Gcal=\Fcal$. Notice moreover that ($\Gcal$-Mo), ($\Gcal$-QL)  and ($\Gcal$-PC) are properties regarding only the restriction of $T$ to $\brvG$.

\begin{remark}[From unconditional to conditional Chisini means]\label{discussion:properties} It is important to observe that the properties ($\Gcal$-Mo), ($\Gcal$-PC), ($\Gcal$-NB) collapse to the assumptions used in Proposition \ref{unconditional} as soon as $\Gcal=\{\emptyset, \Omega\}$. On the other hand it is no surprise that no counterparts of ($\Gcal$-QL) and ($\Gcal$-PS) appear in the statement of Proposition \ref{unconditional}, as both properties are always trivially satisfied for $\Gcal=\{\emptyset, \Omega\}$.
\end{remark}

\begin{remark} The following pasting property can be checked from ($\Gcal$-QL): for $g_i,\hg_i\in\brvG$ and $A_i\in\Gcal$ for $i=1,\ldots,N$ where the sets are mutually disjoint, we have
$$T(g_i\ind_{A_i})\geq T(\hg_i\ind_{A_i}) \;\forall \, i \quad \Rightarrow\quad  T\left(\sum_ig_i\ind_{A_i}\right)\geq T\left(\sum_i\hg_i\ind_{A_i}\right)$$
\end{remark}

Providing a definition of irrelevant events $\Ncal_\Gcal$ \textit{a priori} of any property of the functional $T$ is of primary importance to introduce the notion of ($\Gcal$-Mo) for $T$. As the characterizing condition for $\Ncal_\Gcal$ is somehow cumbersome and difficult to verify, we now state an equivalent, easier-to-handle formulation: if $T$ satisfies ($\Gcal$-QL), $T(0)=0$, and
\begin{equation}
    \label{weak monot}
    g_1,g_2\in\brvG,g_1(\omega)\leq g_2(\omega)\,\forall\omega\in\Omega\Longrightarrow T(g_1)\leq T(g_2),
\end{equation} 
then for $A\in \Gcal$ we have: 
\begin{equation}
    \label{formulalternativeN}
\Ncal_\Gcal=\left\{A\in\Gcal\mid T(x\ind_A)= 0\text{ for all }x\in\R\right\}.  
\end{equation}
The claim  is Lemma \ref{lemma: aux1} in the appendix. Moreover, $T(0)=0$, ($\Gcal$-Mo), ($\Gcal$-QL) and ($\Gcal$-PC) together imply \eqref{weak monot} (see Lemma \eqref{remonmonot}).

\section{Statement of the main result}\label{main:results}

For the rest of the paper we will always assume without loss of generality\footnote{In fact a translation of the functional by defining $\tilde T(\cdot)=T(\cdot)-T(0)$ does not affect the solution to \eqref{functional:equation}} that $T:\mathcal{L}^{\infty}(\Omega,\Fcal)\to \R$ satisfies $T(0)=0$..  \\ We recall that we are interested to the following problem of finding sufficient conditions under which $\cm{f\middle|\Gcal}\neq \emptyset$ where $\cm{f\middle|\Gcal}$ is given by the following
\begin{definition}[Conditional Chisini mean]\label{CCM} 
Let $(\Omega,\Fcal)$ be a measurable space. Consider $T:\brv\rightarrow\R$ and $f\in\brv$. We shall call conditional Chisini mean the set
\begin{equation}
\label{def: chisiniequivclass}
    \cm{f\middle|\Gcal}:=\left\{g\in\brvG\mid T(f\ind_A)=T(g\ind_A)\,\,\forall\,A\in\Gcal\right\}.
\end{equation}
\end{definition}

The following theorem provides sufficient conditions for the existence of conditional Chisini means for a large class of functionals and a fixed $\sigma$-algebra $\Gcal\subset \Fcal$. We stress that ($\Gcal$-PS) is a reasonable requirement if we want to guarantee existence of conditional Chisini means, as it is necessary already for a sigma algebra $\Gcal$ generated by two elements.

\begin{theorem}\label{thm:existencecond NEW} Assume that for a $\sigma$-algebra $\Gcal\subset \Fcal$ the functional $$T:\mathcal{L}^{\infty}(\Omega,\Fcal)\to \R$$ satisfies the properties ($\Gcal$-Mo), ($\Gcal$-QL), and ($\Gcal$-PC).
Then for any $f\in\brv$ for which ($\Gcal$-PS) and ($\Gcal$-NB) hold at $f$, there exists $\hg\in \brvG$ such that $T(f\ind_A)=T(\hg\ind_A)$ for all $A\in\Gcal$, i.e. $\hg\in \cm{f\middle|\Gcal}$. Moreover, such a $\hg$ is essentially unique in that for any $\tg\in\cm{f\middle|\Gcal}$, we have $\{\hg\neq \tg\}\in\mathcal{N}_{\Gcal}$.
\end{theorem}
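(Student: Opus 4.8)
The plan is to solve the system \eqref{functional:equation} ``locally in $\Gcal$'' by first assigning a numerical \emph{local mean} to every set and then gluing these values into a single $\Gcal$-measurable function.

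\medskip
\noindent\textbf{Step 1 (local means).} Write $K:=\snorm{f}$. For each $A\in\Gcal\setminus\Ncal_\Gcal$ I would apply the argument of Proposition \ref{unconditional} to the restricted map $\R\ni x\mapsto T(x\ind_A)$: by ($\Gcal$-NB) at $f$ this map separates $T(f\ind_A)$ between its values at $\pm K$, by ($\Gcal$-PC) it is continuous, and by ($\Gcal$-Mo) it is strictly increasing; hence there is a unique $m(A)\in[-K,K]$ with $T(f\ind_A)=T(m(A)\ind_A)$. The decisive algebraic feature of $m$ is a betweenness property under disjoint unions: if $A_1\cap A_2=\emptyset$ then ($\Gcal$-PS) gives $T(f\ind_{A_1}+f\ind_{A_2})=T(m(A_1)\ind_{A_1}+m(A_2)\ind_{A_2})$, and combining this with the monotonicity \eqref{weak monot} forces $m(A_1\cup A_2)$ to lie between $m(A_1)$ and $m(A_2)$. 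Iterating, any finite or countable disjoint family pastes into a single $\Gcal$-bounded comparison function carrying the same $T$-value as $f$ on the union.

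\medskip
\noindent\textbf{Step 2 (measurable glue, model-independently).} For $t\in\QW$ let $\mathcal U_t$ be the collection of $A\in\Gcal$ such that $T(f\ind_B)\ge T(t\ind_B)$ for every $\Gcal$-subset $B\subseteq A$ (equivalently $m(B)\ge t$ whenever $B\notin\Ncal_\Gcal$). Using the betweenness of Step 1 together with ($\Gcal$-PS) I would show $\mathcal U_t$ is closed under countable unions; the passage from finite to countable unions is delicate precisely because $f$ is only $\Fcal$-measurable, so ($\Gcal$-PC) cannot be applied to the tails $f\ind_{B\setminus B^{(N)}}$. I would instead paste each finite union into a bounded $\Gcal$-measurable function, control the vanishing tail through ($\Gcal$-NB), and take the limit by applying ($\Gcal$-PC) to the $\Gcal$-measurable comparison functions $t\ind_{B^{(N)}}-K\ind_{B\setminus B^{(N)}}$ rather than to $f$ itself. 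Since no reference probability is available, to extract a largest element of $\mathcal U_t$ I would use the monotone set function $A\mapsto T(\ind_A)$ (nonnegative and increasing by \eqref{weak monot}) as a surrogate ``size'': choosing $A_n\in\mathcal U_t$ with $T(\ind_{A_n})\uparrow\sup_{A\in\mathcal U_t}T(\ind_A)$ and setting $G_t:=\bigcup_nA_n$ yields, via ($\Gcal$-Mo), an essentially maximal $G_t\in\mathcal U_t$ (any further enlargement inside $\mathcal U_t$ differs from $G_t$ only by a set in $\Ncal_\Gcal$). The sets $G_t$ are decreasing in $t$, with $G_t=\Omega$ for $t<-K$ and $G_t\in\Ncal_\Gcal$ for $t>K$, so $\hg(\omega):=\sup\{t\in\QW\mid\omega\in G_t\}$ defines a $\Gcal$-measurable function valued in $[-K,K]$.

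\medskip
\noindent\textbf{Step 3 (verification) and uniqueness.} To check $\hg\in\cm{f\middle|\Gcal}$ I would fix $A\in\Gcal$ and a fine grid $-K=t_0<\cdots<t_M=K$, decompose $A$ into the layers $A\cap(G_{t_i}\setminus G_{t_{i+1}})$, and use maximality of the $G_{t_i}$ (together with the dual family of sets all of whose subsets have mean $\le t$) to squeeze the local mean of $f$ on each layer between consecutive grid points; pasting then gives $T(f\ind_A)=T(\hg_M\ind_A)$ up to the grid mesh for the simple function $\hg_M$ built from the layers, and ($\Gcal$-PC) yields equality in the limit as the mesh tends to $0$. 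For uniqueness, given $\hg,\tg\in\cm{f\middle|\Gcal}$ and rational $\e>0$ put $A_\e:=\{\hg\ge\tg+\e\}$; if $A_\e\notin\Ncal_\Gcal$ then, since $\Ncal_\Gcal$ is a $\sigma$-ideal, on a non-null level piece of $\tg$ inside $A_\e$ one can bump a constant and chain \eqref{weak monot} with ($\Gcal$-Mo) to obtain the strict inequality $T(\hg\ind_{A_\e})>T(\tg\ind_{A_\e})$, contradicting $T(\hg\ind_{A_\e})=T(f\ind_{A_\e})=T(\tg\ind_{A_\e})$. Hence $\{\hg>\tg\}=\bigcup_\e A_\e\in\Ncal_\Gcal$, and by symmetry $\{\hg\neq\tg\}\in\Ncal_\Gcal$.

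\medskip
\noindent I expect Steps 2--3 to be the main obstacle: producing a genuinely $\Gcal$-measurable selector of the local means without a reference measure, and proving that the level sets (with their duals) essentially exhaust $\Omega$ so that the simple approximations carry the correct local means. The recurring technical point is that $f\notin\brvG$, which blocks any direct use of ($\Gcal$-PC) and forces every limiting argument to be routed through $\Gcal$-measurable comparison functions, with the gap to $f$ bridged by ($\Gcal$-NB) and ($\Gcal$-PS).
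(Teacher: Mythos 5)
Your Steps 1--2 and the uniqueness argument are essentially sound: the local means $m(A)$ exist by the argument of Proposition \ref{unconditional} applied to $x\mapsto T(x\ind_A)$; the pasting-with-inequalities you need is exactly Lemma \ref{lemma:pasting}(i) and can be derived from ($\Gcal$-PS), ($\Gcal$-QL) and the monotonicity of Lemma \ref{remonmonot}; closure of $\mathcal U_t$ under countable unions does go through by routing the limit through the $\Gcal$-measurable comparison functions $t\ind_{B^{(N)}}-K\ind_{B\setminus B^{(N)}}$; and the maximality of $G_t$ follows from strict monotonicity of $T$ on non-null differences. The uniqueness paragraph is the same argument as the paper's Lemma \ref{lemma:uniqueness}.

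The genuine gap is the upper-bound half of Step 3, and it is not a technicality --- it is the heart of the theorem. Membership of a non-null layer $L_i=A\cap(G_{t_i}\setminus G_{t_{i+1}})$ in the complement of $G_{t_{i+1}}$ tells you only that \emph{every non-null subset of $L_i$ contains a further non-null subset with mean $<t_{i+1}$}; it does \emph{not} give $m(L_i)\le t_{i+1}$, nor any upper bound on $m(B)$ for $B\subseteq L_i$. The betweenness of $m$ under disjoint unions cannot upgrade ``contains a subset with small mean'' to ``has small mean'': if $m(L_i)>t_{i+1}$ you extract $L'\subseteq L_i$ with $m(L')<t_{i+1}$, conclude $m(L_i\setminus L')>m(L_i)$, and the iteration never terminates. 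The same obstruction kills the dual route: a non-null set disjoint from both $G_s$ and the dual $H_t$ ($s<t$) yields a nested sequence alternating between small-mean and large-mean subsets, with no contradiction. For additive set functions this exhaustion is exactly the Hahn decomposition, and that is precisely why the paper spends Section \ref{representation:measure} converting $T$ into an additive representation: ($\Gcal$-QL) gives the Sure Thing Principle, Debreu's Theorem \ref{Debreu} gives the additive functionals $V_A$, Proposition \ref{prop: VA[f] is measure} shows $A\mapsto V_A[f]$ is a signed measure, and Proposition \ref{prop: increase locally} then applies the genuine Hahn decomposition to $V_\cdot[f]-V_\cdot(g+\e)$ to produce the set $\Omega_0$ on which $g$ can be raised, with the conclusion transported back to $T$ via the order-preservation of $V_A$. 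Your proposal uses ($\Gcal$-QL) only through derived monotonicity and pasting facts, never through the additive representation, which is why the exhaustion step you flag as ``the main obstacle'' remains unproved; without it the construction of $\hg$ from the level sets $G_t$ does not yield $T(f\ind_A)\le T(\hg\ind_A)$.
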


\begin{remark}
    It is possible to mimick the arguments in \cite[Lemmas 3.3 to 3.6]{CoquetPeng02}, to obtain similar properties in our framework, literally substituting \textquotedblleft $\PW-a.s.$\textquotedblright with \textquotedblleft outside a measurable set in $\Ncal_\Gcal$\textquotedblright, and \textquotedblleft$\PW(A)>0$\textquotedblright with \textquotedblleft$A\in \Gcal\setminus\Ncal_\Gcal$\textquotedblright. In this way, one guarantees  monotonicity, tower property and homogeneity with respect to indicators of $\Gcal$-measurable events for the conditional Chisini mean.
\end{remark}

\section{Scalarization of conditional Risk Measures}
\label{example:RM}

The theory of Risk Measures is established  in Mathematical Finance, intertwining  Convex Analysis and Probability. A major branch of this theory is concerned with the risk assessment in dynamic frameworks, making extensive use of the notion of conditional Risk Measures which we here briefly recall (see \cite[Chapter 11]{FS11} for a detailed overview).  
\begin{definition}
\label{def:cond RM}
A functional $\rho_\Gcal:\Linfty\rightarrow\LinftyG$ is called conditional convex Risk Measure, if: $(i)$ $\rho_\Gcal(X)\leq \rho_\Gcal(Y)$ $\PW$-a.s. whenever $Y\leq X$ $\PW$-a.s.; $(ii)$ $\rho_\Gcal(X+c)=\rho_\Gcal(X)-c$, $\PW$-a.s. for all $c\in \LinftyG$; $(iii)$ for every $X_1,X_2\in \Linfty$, $\Lambda\in \LinftyG,0\leq \Lambda\leq 1,$ $$\rho_\Gcal(\Lambda X_1+(1-\Lambda)X_2)\leq \Lambda \rho_\Gcal(X_1)+(1-\Lambda)\rho_\Gcal(X_2) \quad \PW\text{-a.s.}$$ 
\end{definition}
Notice that in the previous sections we used random variables as real valued measurable functions on $(\Omega,\Fcal)$. In this context a reference probability $\PW$ is necessarily fixed a priori, hence as customary we will necessitate equivalence classes under $\PW$-a.s. equality. 
We will adopt the following notation: for any $A\in \Fcal$ we shall denote by $\Ind_A=[\ind_A]_{\PW}$ the equivalence class generated in $\Linfty$ by the indicator function $\ind_A\in \brv$.

\medskip

Given a conditional convex Risk Measure $\rho_\Gcal:\Linfty\rightarrow\LinftyG$, its \textbf{scalarization}, defined as the (non conditional) Risk Measure 
\begin{equation}\label{rho0}
\rho_0:\Linfty\to \R\text{, } \quad\rho_0(X):=\Ep{\rho_\Gcal(X)},    
\end{equation} 
turns out to play a key role in the analysis of the dual representation of conditional Risk Measures (see e.g. \cite[Theorem 1]{DetlefsenScandolo05}).
The main findings of this paper will allow us to characterize all the functionals $\rho_0$ which can be represented as the scalarization of a conditional Risk Measure as in \eqref{rho0}.

\begin{theorem}\label{scal:RM:prop}
Fix a probability space $(\Omega,\Fcal,\PW)$. Let $\rho_0:\Linfty\rightarrow\R$ be given, with $\rho_0(0)=0$.
Suppose that
\begin{enumerate}
\item $\rho_0(X+Y)=\rho_0(X)-\Ep{Y}$ for $X\in \Linfty$, $Y\in\LinftyG$;
\item for every $X_1,X_2\in \Linfty$, $X_1\leq X_2$ $\PW$-a.s. implies $\rho_0(X)\geq \rho_0(Y)$;
\item $\rho_0$ satisfies ($\Gcal$-PS) at $X$ for every $X\in \Linfty$\footnote{Meaning that the well defined map $f\mapsto\rho_0([f]_{\PW})$, $f\in\brv$ satisfies the property at any $f\in\brv$.} ; 
\item for every $N\geq 1$, for every $0\leq \lambda_1,\dots,\lambda_N\leq 1$ and $A_1,\dots,A_N\in\Gcal$ partition of $\Omega$, setting $\Lambda=\sum_{j=1}^N\lambda_j\Ind_{A_j}$, we have for all $X_1,X_2\in \Linfty$ 
\small\begin{equation}
    \label{eq:for convexity}
    \rho_0\left(\Lambda X_1+\left(1-\Lambda\right)X_2\right)\leq \sum_{j=1}^N\left(\lambda_j \rho_0(X_1\Ind_{A_j})+(1-\lambda_j) \rho_0(X_2\Ind_{A_j})\right).
\end{equation}
\end{enumerate}
Then there exists a conditional Risk Measure $\rho_\Gcal:\Linfty\rightarrow\LinftyG$, with $\rho_\Gcal(0)=0$ $\PW-$a.s., such that  $\rho_0(X)=\Ep{\rho_\Gcal(X)}$ for every $X\in \Linfty$.

\medskip

\noindent\textbf{Conversely:} for every conditional convex Risk Measure the map $\rho_0$ defined by \eqref{rho0} satisfies properties in item 1 to item 4.
\end{theorem}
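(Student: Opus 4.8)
The plan is to reduce both implications to the Conditional Chisini mean machinery of Theorem~\ref{thm:existencecond NEW}, after recasting the (decreasing) risk measure as an increasing functional. For the forward implication, define $T\colon\brv\to\R$ by $T(f):=-\rho_0([f]_\PW)$; this is well defined on $\PW$-equivalence classes (cf.\ the footnote to hypothesis (3)) and $T(0)=0$. The key observation is that, by cash additivity (hypothesis (1)) applied with the first argument $0$, the restriction of $T$ to $\brvG$ is \emph{linear}: $T(h)=\Ep{h}$ for every $h\in\brvG$. Since ($\Gcal$-Mo), ($\Gcal$-QL) and ($\Gcal$-PC) concern only this restriction, they all follow at once: strict monotonicity of $x\mapsto x\,\PW(A)$ off the $\PW$-null sets gives ($\Gcal$-Mo), linearity gives ($\Gcal$-QL), and dominated convergence gives ($\Gcal$-PC). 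Moreover \eqref{weak monot} holds trivially on $\brvG$, so \eqref{formulalternativeN} identifies $\Ncal_\Gcal$ with the $\PW$-null sets of $\Gcal$; in particular the essential uniqueness of Theorem~\ref{thm:existencecond NEW} becomes $\PW$-a.s.\ uniqueness.

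Next I would verify the two $f$-dependent hypotheses. Condition ($\Gcal$-PS) for $T$ is exactly hypothesis (3), the sign flip being immaterial since the property only compares equalities. For ($\Gcal$-NB), the order relations $-\snorm{f}\Ind_A\le [f]_\PW\Ind_A\le\snorm{f}\Ind_A$ together with the antitonicity of hypothesis (2) and the computed values $T(\pm\snorm{f}\ind_A)=\pm\snorm{f}\,\PW(A)$ yield $-\snorm{f}\,\PW(A)\le\rho_0(X\Ind_A)\le\snorm{f}\,\PW(A)$, which is precisely ($\Gcal$-NB) for $T$. Theorem~\ref{thm:existencecond NEW} then produces a $\PW$-a.s.\ unique $\hg\in\cm{f\middle|\Gcal}$, and I set $\rho_\Gcal(X):=-[\hg]_\PW\in\LinftyG$. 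Reading the defining identity $T(f\ind_A)=T(\hg\ind_A)$ through $T|_{\brvG}=\Ep{\cdot}$ rewrites it as $\rho_0(X\Ind_A)=\Ep{\rho_\Gcal(X)\Ind_A}$ for all $A\in\Gcal$; the choice $A=\Omega$ is the desired representation $\rho_0(X)=\Ep{\rho_\Gcal(X)}$, and $f=0$ forces $\rho_\Gcal(0)=0$ $\PW$-a.s.

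It remains to check that $\rho_\Gcal$ is a conditional convex risk measure, testing everything against $\{\Ep{\,\cdot\,\Ind_A}\}_{A\in\Gcal}$ and using that $\Ep{U\Ind_A}\le\Ep{V\Ind_A}$ for all $A\in\Gcal$ forces $U\le V$ $\PW$-a.s.\ (take $A=\{U>V\}$). Cash additivity and monotonicity of $\rho_\Gcal$ then drop out of hypotheses (1) and (2). Convexity is the delicate point. First, ($\Gcal$-PS) together with the linearity of $T$ on $\brvG$ gives additivity of $\rho_0$ over disjoint $\Gcal$-sets for a single argument, $\rho_0(Z\Ind_{A_1\cup A_2})=\rho_0(Z\Ind_{A_1})+\rho_0(Z\Ind_{A_2})$. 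For a simple weight $\Lambda=\sum_j\lambda_j\Ind_{A_j}$ I localize hypothesis (4): applying \eqref{eq:for convexity} to the partition $\{A_j\cap B\}_j\cup\{B^c\}$ with outer coefficient $\mu=1$ makes the $B^c$-terms cancel and leaves $\Ep{\rho_\Gcal(\Lambda X_1+(1-\Lambda)X_2)\Ind_B}\le\Ep{\bigl(\Lambda\rho_\Gcal(X_1)+(1-\Lambda)\rho_\Gcal(X_2)\bigr)\Ind_B}$ for every $B\in\Gcal$, i.e.\ conditional convexity for simple $\Lambda$. The general case follows by uniform dyadic approximation $\Lambda_n\to\Lambda$, passing to the limit through the $1$-Lipschitz bound $\abs{\rho_\Gcal(X)-\rho_\Gcal(Y)}\le\snorm{X-Y}$ $\PW$-a.s.\ (itself a consequence of the cash additivity and monotonicity already established).

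For the converse, given a conditional convex risk measure $\rho_\Gcal$ (normalized by $\rho_\Gcal(0)=0$ $\PW$-a.s., matching $\rho_0(0)=0$), hypotheses (1) and (2) are immediate from (ii) and (i) after taking $\Ep{\cdot}$. The remaining two rest on the locality identity $\rho_\Gcal(\Ind_A X+\Ind_{A^c}Y)=\Ind_A\rho_\Gcal(X)+\Ind_{A^c}\rho_\Gcal(Y)$ for $A\in\Gcal$, which I would derive from conditional convexity alone by applying (iii) with $\Lambda=\Ind_A$ to both $X$ and $\Ind_A X+\Ind_{A^c}Y$ and swapping the roles. Locality with normalization yields $\Ep{\rho_\Gcal(X)\Ind_A}=\rho_0(X\Ind_A)$, whence ($\Gcal$-PS) follows by additivity over disjoint $\Gcal$-sets and (4) follows by applying (iii) with $\Lambda=\sum_j\lambda_j\Ind_{A_j}$ and integrating. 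The main obstacle throughout is the convexity bookkeeping in the forward direction---specifically the localization that extracts a single-set scalar convexity from \eqref{eq:for convexity}, and the Lipschitz-based passage from simple to arbitrary $\Lambda$---while reconciling the decreasing risk-measure convention with the increasing Chisini framework is the other point demanding care.
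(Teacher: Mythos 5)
Your proof is correct and follows essentially the same route as the paper: both recast $\rho_0$ as an increasing functional $T$ satisfying the hypotheses of Theorem~\ref{thm:existencecond NEW} (you negate the output, $T=-\rho_0$, while the paper negates the argument, $T(f)=\rho_0(-[f]_\PW)$ --- an immaterial difference, and your convention actually keeps the signs cleaner), define $\rho_\Gcal$ as minus the resulting conditional Chisini mean, verify the axioms by testing against $\Ep{\cdot\,\Ind_A}$ with the simple-to-general passage for $\Lambda$ via Lipschitz continuity, and prove the converse through the locality identity for conditional risk measures. Your localization of hypothesis (4) via the augmented partition $\left\{A_j\cap B\right\}_j\cup\left\{\Omega\setminus B\right\}$ with coefficient $1$ is only a mild variant of the paper's argument, which instead uses $\rho_\Gcal(X\Ind_A)=\rho_\Gcal(X)\Ind_A$ and a contradiction on the set where conditional convexity fails by $\varepsilon$.
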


\section{Proofs of Section \ref{main:results}}

This section is devoted to the proof of the main results of this paper.  Our research is inspired to the seminal paper \cite{De60} (see Appendix \ref{debreu}) jointly with an approach proposed in \cite{WZ99} that will be adapted to our scope in the proof of the following Theorem \ref{thm: reprTwithmeasure}. The overall argument 
will be involved and structured in several steps in the remainder of this section.  Nevertheless, the solution to \eqref{functional:equation} for a finitely generated $\sigma$-algebra $\Gcal$ is simple and independent from the aforementioned papers (see to this regard Proposition \ref{prop:finite NEW}). 


\begin{assumption}
\label{ass:conditional}
 $(\Omega, \Fcal)$ is a given measurable space.  $\Gcal\subseteq \Fcal$ is a sub $\sigma-$algebra.  $T:\brv\rightarrow \R$ is a given functional such that it satisfies ($\Gcal$-Mo), ($\Gcal$-QL), ($\Gcal$-PC),  and $T(0)=0$.
\end{assumption}

\begin{assumption}
\label{ass:conditional:local}
 $(\Omega, \Fcal)$ is a given measurable space.  $\Gcal\subseteq \Fcal$ is a sub $\sigma-$algebra.  $T:\brv\rightarrow \R$ is a given functional. $T$ satisfies ($\Gcal$-PS) and ($\Gcal$-NB) at a fixed $f\in \brv$.
\end{assumption}




\subsection{A representation result}\label{representation:measure}

\begin{definition}
For a given  measurable space $(\Omega,\Fcal)$, a sub $\sigma$-algebra $\Gcal\subseteq\Fcal$ and a functional $T:\brv\rightarrow\R$, we denote by $\Pi(\Gcal)$ the class of finite partitions $\pi$ of $\Omega$ obtained adopting $\Gcal$-measurable sets (i.e. $\pi\subseteq \Gcal$) such that for at least three distinct $A_1,A_2,A_3\in \pi$ we have $T(\ind_{A_j})>0,j=1,2,3$.
\end{definition}

\begin{theorem}
\label{thm: reprTwithmeasure}
Suppose Assumption \ref{ass:conditional} is satisfied and $\Pi(\Gcal)\neq \emptyset$. Then there exists a functional $V:\Gcal\times\brvG\rightarrow\R$, $(A,g)\mapsto V_A(g)$, such that: 
\begin{enumerate}
    \item for every $g\in\brvG$ the map $A\mapsto V_A(g)$ is a signed measure on  $(\Omega,\Gcal)$ with $V_A(0)=0$ for every $ A\in\Gcal$. Moreover $A\mapsto V_A(\ind_{\Omega})=:\PW(A)$ is a probability measure on  $(\Omega,\Gcal)$ such that $\Ncal_\Gcal = \{A\in \Gcal \mid \PW(A)=0\}$; 
    \item for every $g\in\brvG, A\in\Gcal$ we have $V_A(g)=V_A(g\ind_A)$;
    \item given $g_1,g_2\in \brvG$ and $A\in\Gcal$  we have $T(g_1\ind_A)\leq T(g_2\ind_A)$ if and only if $V_A(g_1)\leq V_A(g_2)$;
    \item for every $A\in\Gcal$ the functional $g\mapsto V_A(g)$, defined on  $\brvG$, is ($\Gcal$-PC).
\end{enumerate}
\end{theorem}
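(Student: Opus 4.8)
The plan is to build $V$ first on $\Gcal$-simple functions by invoking Debreu's additive representation theorem (Appendix \ref{debreu}) on each finite $\Gcal$-partition, in the spirit of \cite{WZ99}, and then to extend $V$ to all of $\brvG$ by pointwise-bounded approximation using ($\Gcal$-PC). Fix a finite partition $\pi=\{A_1,\dots,A_n\}\subseteq\Gcal$ of $\Omega$ and identify the $\pi$-simple functions $\sum_i x_i\ind_{A_i}$ with $\R^n$. The preorder $x\preceq y\iff T(\sum_i x_i\ind_{A_i})\le T(\sum_i y_i\ind_{A_i})$ is continuous by ($\Gcal$-PC), monotone by ($\Gcal$-Mo), and, crucially, coordinate-independent (separable in Debreu's sense) by ($\Gcal$-QL): the latter states exactly that the comparison of two profiles agreeing outside a block of coordinates does not depend on their common values on that block. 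By ($\Gcal$-Mo) the essential coordinates are precisely the atoms $A_i\notin\Ncal_\Gcal$. Since every finite partition admits a common refinement with a fixed $\pi_0\in\Pi(\Gcal)$, I may restrict to the cofinal family of refinements of $\pi_0$, along which there are always at least three essential coordinates. Debreu's theorem then yields continuous increasing functions $v_i^\pi$ with $v_i^\pi(0)=0$ (using $T(0)=0$) such that $\sum_i v_i^\pi(x_i)$ represents $\preceq$, and, because there are $\ge 3$ essential factors, this additive representation is unique up to a single common positive rescaling.

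Setting $V_A(g):=\sum_{A_i\subseteq A}v_i^\pi(g|_{A_i})$ for $\pi$-simple $g$ immediately gives finite additivity $V_{A\sqcup B}=V_A+V_B$ from the sum structure, and yields property 3 on simple functions, since the block outside $A$ contributes $\sum_{A_i\not\subseteq A}v_i^\pi(0)=0$ and the representation of $T(\cdot\ind_A)$ is, by ($\Gcal$-QL), exactly the $A$-block of the additive representation. The main obstacle is consistency under refinement. When $\pi'$ refines $\pi$, Debreu uniqueness forces the two representations to agree up to one positive constant; the normalization $v_i^\pi(0)=0$ removes the additive freedom, and I fix the residual scaling once and for all by imposing $\sum_{A_i\in\pi}V_{A_i}(\ind_\Omega)=1$, which is preserved when an atom is split and hence compatible across refinements. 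This turns $\{V^\pi\}$ into a projective family on the directed set of partitions, so $V_A(g)$ is unambiguously defined for every $\Gcal$-simple $g$, with $V_A(0)=0$.

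I expect the delicate point to be verifying that essentiality is inherited under refinement, so that Debreu always applies with $\ge 3$ factors and the common scaling is never destroyed: if an essential atom $A_j$ split into sub-atoms all lying in $\Ncal_\Gcal$, then $A_j\in\Ncal_\Gcal$ by closure of $\Ncal_\Gcal$ under finite unions, contradicting $T(\ind_{A_j})>0$; hence each essential atom retains an essential sub-atom. This is precisely where the three-factor hypothesis $\Pi(\Gcal)\neq\emptyset$ does the real work, as it is what makes the additive representation unique and therefore gluable.

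Finally I would extend $V$ and verify the measure statements. For general $g\in\brvG$ pick uniformly bounded $\Gcal$-simple $g_n\to g$ pointwise and set $V_A(g):=\lim_n V_A(g_n)$; transporting ($\Gcal$-PC) through property 3 and the continuity of the $v_i^\pi$ shows the limit exists and is independent of the approximating sequence, which gives property 4, while property 2 follows since $g$ and $g\ind_A$ share the same restriction to $A$. Finite additivity passes to the limit, and countable additivity follows from the identity $V_A(g)=V_\Omega(g\ind_A)$ (a consequence of additivity over $\{A,A^c\}$ and property 2, using $V_{A^c}(g\ind_A)=V_{A^c}(0)=0$): for $A_k\downarrow\emptyset$ one has $g\ind_{A_k}\to 0$ pointwise and uniformly bounded, so ($\Gcal$-PC) of $V_\Omega$ forces $V_{A_k}(g)\to 0$. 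Setting $\PW(A):=V_A(\ind_\Omega)$ produces a probability measure by the chosen normalization, and the identification $\Ncal_\Gcal=\{A\in\Gcal\mid\PW(A)=0\}$ follows by combining property 3 with the alternative description \eqref{formulalternativeN} of $\Ncal_\Gcal$.
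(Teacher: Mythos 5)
Your proposal follows essentially the same route as the paper: Debreu's additive representation on each partition in $\Pi(\Gcal)$ (with the Sure Thing Principle supplied by ($\Gcal$-QL) and essentiality by ($\Gcal$-Mo)), the normalization $V_A^\pi(0)=0$, $\sum_A V_A^\pi(1)=1$ to glue representations across refinements, preservation of essentiality under refinement via closure of $\Ncal_\Gcal$ under finite unions, extension to $\brvG$ by bounded pointwise approximation using ($\Gcal$-PC), and countable additivity from $V_A(g)=V_\Omega(g\ind_A)$ plus continuity. The only place where the paper does substantially more work than your sketch acknowledges is the extension step (its Lemma \ref{extension:brv NEW}), where proving that your limit $\lim_n V_A(g_n)$ exists and is sequence-independent requires the connectedness/continuity argument on $\hPhi(\R)$ rather than following directly from order preservation.
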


\subsubsection{Proof of Theorem \ref{thm: reprTwithmeasure}} 
\label{proofthm reprTwithmeasure}
The present Section \ref{proofthm reprTwithmeasure} is entirely devoted to the proof of the previous theorem 
and therefore \textbf{without further mention in the statements and proofs of the technical Lemmas, we shall always work under the hypotheses of Theorem \ref{thm: reprTwithmeasure}, i.e. Assumption \ref{ass:conditional} holds true and $\Pi(\Gcal)\neq \emptyset$.}

\begin{lemma}
\label{lemma: aux2}
Suppose Assumption \ref{ass:conditional} is satisfied. Then 
\begin{eqnarray}
\label{NT with indicators}
\Ncal_\Gcal & = & \{N\in\Gcal\mid T(\ind_N)=0 \}.
\end{eqnarray}
\end{lemma}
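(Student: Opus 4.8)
The plan is to establish the set equality by proving the two inclusions separately. The key observation is that both directions rest essentially on the bare definition of $\Ncal_\Gcal$ together with the strict monotonicity property ($\Gcal$-Mo); in particular, no appeal to the alternative characterization \eqref{formulalternativeN}, nor to \eqref{weak monot}, is actually needed, so the argument is short and self-contained.

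For the inclusion $\Ncal_\Gcal\subseteq\{N\in\Gcal\mid T(\ind_N)=0\}$, I would simply instantiate the defining condition of $\Ncal_\Gcal$. If $N\in\Ncal_\Gcal$, then $T(g_1+g_2\ind_N)=T(g_1)$ for all $g_1,g_2\in\brvG$; choosing $g_1=0$ and $g_2=\ind_\Omega$ (recalling $\ind_\Omega\in\brvG$ and $\ind_\Omega\ind_N=\ind_N$) yields $T(\ind_N)=T(0)=0$, using $T(0)=0$ from Assumption \ref{ass:conditional}. Thus $N\in\{N\in\Gcal\mid T(\ind_N)=0\}$.

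For the reverse inclusion I would argue by contraposition: assume $N\in\Gcal\setminus\Ncal_\Gcal$ and show $T(\ind_N)\neq 0$. Since $N$ is non-null, ($\Gcal$-Mo) applies with $A=N$; taking $g=0\in\brvG$, $x=0$ and $y=1$ gives
$$T(0\cdot\ind_N+0\cdot\ind_{\Omega\setminus N})< T(1\cdot\ind_N+0\cdot\ind_{\Omega\setminus N}),$$
that is $0=T(0)<T(\ind_N)$. Hence $T(\ind_N)>0$, so in particular $T(\ind_N)\neq 0$, which is precisely the contrapositive of $\{N\in\Gcal\mid T(\ind_N)=0\}\subseteq\Ncal_\Gcal$.

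I do not expect any real obstacle here; the one point requiring care is that ($\Gcal$-Mo) is postulated only for sets $A\in\Gcal\setminus\Ncal_\Gcal$, which is exactly why the reverse inclusion must be run through the contrapositive, so that $N$ is guaranteed non-null before strict monotonicity is invoked. Conceptually, the lemma is the \emph{indicator} refinement of \eqref{formulalternativeN}: the full family of tests $\{\,T(x\ind_N)=0 : x\in\R\,\}$ collapses to the single test $T(\ind_N)=0$, precisely because ($\Gcal$-Mo) forces the map $a\mapsto T(a\ind_N)$ to be strictly increasing, hence to vanish only at $a=0$, on any non-null $N$.
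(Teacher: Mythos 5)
Your proof is correct and follows essentially the same route as the paper: the easy inclusion comes from instantiating the definition of $\Ncal_\Gcal$, and the reverse inclusion from applying ($\Gcal$-Mo) to a non-null set to force $T(\ind_N)>0$, exactly as in the paper's contradiction argument. The only (harmless) difference is that the paper reaches the easy inclusion via Lemma \ref{remonmonot} and the characterization \eqref{formulalternativeN}, whereas you correctly observe that the choice $g_1=0$, $g_2=\ind_\Omega$ in \eqref{def:ncal initial} makes that detour unnecessary.
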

\begin{proof}
Lemma \ref{remonmonot} shows that condition \eqref{weak monot} automatically holds true.
If $T(\ind_A)=0$ but $A \in\Gcal\setminus \Ncal_\Gcal$, we could consider $x\in (0,1)$ so that from ($\Gcal$-Mo), $0=T(0)=T(0\ind_A)< T(x\ind_A)<T(\ind_A)=0$.

\end{proof}

\begin{lemma}
\label{Step 1}
Fix $\pi\in\Pi(\Gcal)$, let $\sigma(\pi)$ be the $\sigma$-algebra generated by $\pi$ and consider the vector space $\Scal(\sigma(\pi))$ of $\sigma(\pi)$-measurable functions. 
There exist functions $V_A^{\pi}:\Scal(\sigma(\pi))\to\R, A\in\pi$ such that for $f,g\in\Scal(\sigma(\pi))$:
\begin{equation}
    \label{eq: order preserving on simples NEW}
    T(g_1)\geq T(g_2)\text{ if and only if } \sum_{A\in\pi} V^{\pi}_{A}(g_1) \geq \sum_{A\in\pi} V^{\pi}_{A}(g_2);
\end{equation}
\begin{equation}\label{localityNEW}
V^{\pi}_A(g) = V^{\pi}_A(g\ind_A) \text{ for any } A\in \pi \text{ and } g\in \Scal(\sigma(\pi)).     
\end{equation}
\end{lemma}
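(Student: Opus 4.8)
The plan is to recognize this as an instance of the additive representation theorem of Debreu (Appendix~\ref{debreu}, \cite{De60}). First I would identify $\Scal(\sigma(\pi))$ with the Euclidean product $\R^{\pi}=\prod_{A\in\pi}\R$: every $g\in\Scal(\sigma(\pi))$ is constant on each atom $A\in\pi$, hence is uniquely encoded by the vector $(x_A)_{A\in\pi}$ of its values, and conversely every such vector gives $g=\sum_{A\in\pi}x_A\ind_A\in\brvG$. Under this identification $T$ induces a map $\widehat T:\R^{\pi}\to\R$, $\widehat T((x_A)_A)=T(\sum_A x_A\ind_A)$, and thus a complete preorder $\succeq$ on $\R^{\pi}$ that is a weak order represented by $\widehat T$. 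The target functions are then sought in the form $V^{\pi}_A(g)=v_A(x_A)$ for suitable $v_A:\R\to\R$; with this ansatz the locality requirement \eqref{localityNEW} is automatic, since $g$ and $g\ind_A$ carry the same value on $A$, and \eqref{eq: order preserving on simples NEW} reduces to the assertion that $\sum_A v_A(\cdot)$ represents the same order as $\widehat T$.

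Next I would split the atoms into the essential ones $E=\{A\in\pi\mid T(\ind_A)>0\}$ and the remaining ones, which by Lemma~\ref{lemma: aux2} are precisely the atoms lying in $\Ncal_\Gcal$. On such a null atom the value of $g$ is irrelevant to $T$ by the very definition of $\Ncal_\Gcal$, so I simply set $V^{\pi}_A\equiv 0$ for $A\in\pi\setminus E$; these coordinates are inessential and drop out of both \eqref{localityNEW} and the sum in \eqref{eq: order preserving on simples NEW}. The hypothesis $\pi\in\Pi(\Gcal)$ guarantees $|E|\geq 3$, which is exactly the number of essential factors needed for the additive (rather than merely two-factor/Thomsen) form of the theorem.

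Then I would verify, on the product $\R^{E}$ equipped with the order induced by $\widehat T$, the hypotheses of Debreu's theorem: each factor $\R$ is a connected separable topological space; ($\Gcal$-PC) yields continuity of $\widehat T$ — indeed a convergent sequence in $\R^{E}$ corresponds to a norm-bounded, pointwise-convergent sequence of simple functions, so $T$ of it converges — whence the order is continuous; ($\Gcal$-Mo) makes every coordinate of $E$ essential; and ($\Gcal$-QL), applied with $A$ ranging over $\Gcal$-measurable sets (in particular single atoms and unions of essential atoms), furnishes exactly coordinate independence, i.e. the order along any group of coordinates is unaffected by the fixed levels of the remaining ones. Invoking the theorem produces functions $v_A:\R\to\R$, $A\in E$, with $\widehat T(x)\geq\widehat T(y)\iff\sum_{A\in E}v_A(x_A)\geq\sum_{A\in E}v_A(y_A)$.

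Finally, writing $g=\sum_{A\in\pi}x_A\ind_A$ and setting $V^{\pi}_A(g):=v_A(x_A)$ for $A\in E$ and $V^{\pi}_A\equiv 0$ otherwise, property \eqref{localityNEW} holds by construction, while \eqref{eq: order preserving on simples NEW} is the additive representation just obtained, after translating back through $g\leftrightarrow(x_A)_A$ and discarding the vanishing null terms. The main obstacle is the faithful translation of the abstract axioms into Debreu's hypotheses — in particular checking that ($\Gcal$-QL) delivers coordinate independence in the precise form the theorem requires, and that the null atoms can be harmlessly excised so that exactly the three essential factors guaranteed by $\Pi(\Gcal)\neq\emptyset$ remain; the additive representation itself is then imported directly from Appendix~\ref{debreu}.
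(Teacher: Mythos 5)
Your proposal is correct and follows essentially the same route as the paper: identify $\Scal(\sigma(\pi))$ with $\R^{|\pi|}$, use ($\Gcal$-PC) for continuity of the induced preorder, ($\Gcal$-QL) for the Sure Thing Principle, the three non-null atoms from $\pi\in\Pi(\Gcal)$ for essentiality of more than two coordinates, and then invoke Theorem~\ref{Debreu} to define $V^\pi_{A_i}(g):=U_i(x_i)$, from which \eqref{localityNEW} is automatic. The only cosmetic difference is that you excise the null atoms and apply Debreu on $\R^{E}$ with $V^\pi_A\equiv 0$ elsewhere, whereas the paper applies the theorem directly on the full product (which already tolerates inessential coordinates); both are fine.
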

\begin{proof}
For any $\pi\in\Pi(\Gcal)$ let $\sigma(\pi)$ be the $\sigma$-algebra generated by $\pi$. We consider the restriction of $T$ to  $\Scal(\sigma(\pi))$. In particular, as any $g\in \Scal(\sigma(\pi))$ can be represented in the form $\sum_{A\in\pi}x_A \ind_A$ the functional $T^{\pi}:\Scal(\sigma(\pi))\to \R$ will be defined as $T^{\pi}(\sum_{A\in\pi}x_A \ind_A)=T(\sum_{A\in\pi}x_A \ind_A)$. $T^{\pi}$ induces an order on $\R^{d}$ where $d=|\pi|\geq 3$ is the cardinality of partition $\pi$, namely
\[(x_1,\ldots,x_d)\succeq^{\pi} (y_1,\ldots,y_d) \text{ if and only if } T^{\pi}\big(\sum_{i=1}^d x_i \ind_{A_i} \big) \geq T^{\pi}\big(\sum_{i=1}^d y_i \ind_{A_i}\big).\]
First we show that the sets $\{(x_1,\ldots,x_d)\in\R^d\mid (x_1,\ldots,x_d)\succeq^{\pi} (y_1,\ldots,y_d)\}$,  $\{(x_1,\ldots,x_d)\in\R^d\mid  (y_1,\ldots,y_d) \succeq^{\pi} (x_1,\ldots,x_d)\}$ are closed in $\R^d$ (for simplicity we can adopt the sup norm on $\R^d$). Let $\{x^n\}\subset \R^d$ be a sequence converging to $x\in\R^d$ then clearly $h_n=\sum_{i=1}^d x_i^n \ind_{A_i}$ is $\snorm{\cdot}$ bounded in $\brvG$ and converges pointwise to $h=\sum_{i=1}^d x_i \ind_{A_i}$ (in particular it converges with respect to $\snorm{\cdot}$).  Property ($\Gcal$-PC) guarantees that $\lim T^{\pi}(h_n) = \lim T(h_n) = T(h) = T^{\pi}(h)$. ($\Gcal$-QL)  implies that  $\succeq^{\pi}$ satisfies the Sure Thing Principle (see Definition \ref{Sure:Thing}). Since we are assuming that at least three distinct elements in $\pi$ do not belong to $\mathcal{N}_{\Gcal}$, more than two indexes are essential for $\succeq^{\pi}$. We can  apply Theorem \ref{Debreu} guaranteeing  the existence of $V_A^{\pi}:\Scal(\sigma(\pi))\to\R$ for any $A\in\pi$ such that
\begin{equation}
    \label{eq: order preserving on simples}
    (x_1,\ldots,x_d)\succeq^{\pi} (y_1,\ldots,y_d) \text{ if and only if } \sum_{A\in\pi} V^{\pi}_{A}(g_1) \geq \sum_{A\in\pi} V^{\pi}_{A}(g_2),
\end{equation}
where $g_1=\sum_{i=1}^d x_i\ind_{A_i}$, $g_2=\sum_{i=1}^d y_i\ind_{A_i}$ and 
\[V^{\pi}_{A_i}(g_1):= U_i(x_i) \text{ where } U_i \text{ is given by Theorem \ref{Debreu}.}\]
As an immediate consequence of the definition of $V^{\pi}_{A_i}$  we deduce \eqref{eq: order preserving on simples NEW} and \eqref{localityNEW}. 

\end{proof}
\begin{remark}
Let $\{V^{\pi}_A\}_{A\in\pi}$ be the family obtained obtained in Lemma \ref{Step 1}. We observe that $x\mapsto V^{\pi}_A(x\ind_A)$ is  strictly increasing, continuous for any $A\in\pi$ as a consequence of ($\Gcal$-Mo) and ($\Gcal$-PC). In fact if $x<y$ and $A\in\pi$ is such that $T(\ind_A)>0$, then by ($\Gcal$-Mo) we have $T(x\ind_A)<T(y\ind_A)$. If we had $V^\pi_A(x)\geq V^\pi_A(y)$,  \eqref{eq: order preserving on simples NEW} would yield $T(x\ind_A)\geq T(y\ind_A)$, a contradiction. Continuity of $V_A$ is a direct consequence of the fact that the $U_i, i=1,\dots,N$ in Theorem \ref{Debreu} are guaranteed to be continuous.
\end{remark}
\begin{remark}
\label{rem: fix slope and intercetta}
In Lemma \ref{Step 1}, for a fixed partition $\pi$ we found an alternative representation to $T$ for the order $\succeq^{\pi}$, namely by $\sum_{A\in\pi} V^{\pi}_{A}$. For the remainder of the proof it is important to ensure that such new representation is uniquely determined.
Since Theorem \ref{Debreu} provides uniqueness of the representation up to increasing affine transformations, 
we can choose $\{V^\pi_A(\cdot)\}_{A\in\pi}$ such that $V^\pi_A(0)=0$ for all $A\in\pi$ and $\sum_{A\in\pi}V^\pi_A(1)=1 $. 
\end{remark}

We now take care of consistency over refinements. Fix two partitions $\pi,\pi'$ of $\Omega$, where $\pi'$ is  a refinement of $\pi$, i.e. any $A\in\pi$ can be written as the union of some $B\in j(A)\subset \pi'$ where $j(A)$ is the set $\{B\in \pi'\mid B\subseteq A\}$.
\\ If  $\pi'\in\Pi(\Gcal)$, applying Lemma \ref{Step 1} for $d' =|\pi'|$ we can find $ V^{\pi'}_B:\Scal(\sigma(\pi'))\to\R$ for any $B\in\pi'$ such that \eqref{eq: order preserving on simples NEW} and \eqref{localityNEW} hold true.

\begin{lemma}\label{Wakker NEW} Let $\pi\in\Pi(\Gcal)$ be given, and let be $\pi'$ be a refinement of $\pi$ such that $A\in\Gcal$ for all $A\in\pi'$. Then $\pi'\in\Pi(\Gcal)$ and for any $g\in \Scal(\sigma(\pi))$ and $A\in\pi$ we have $V^{\pi}_A(g\ind_A)=\sum_{B\in j(A)} V^{\pi'}_B(g\ind_B)$.
\end{lemma}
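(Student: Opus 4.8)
The plan is to exploit the uniqueness of the Debreu representation (up to increasing affine transformations) to match the refined representation $\{V^{\pi'}_B\}$ against the coarse one $\{V^{\pi}_A\}$ on the coarser algebra. First I would verify that $\pi'\in\Pi(\Gcal)$: since $\pi'$ refines $\pi$ and $\pi$ has at least three cells $A_1,A_2,A_3$ with $T(\ind_{A_j})>0$, each such $A_j$ contains at least one $B\in j(A_j)$, and by additivity/monotonicity (using Lemma \ref{lemma: aux2} and \eqref{weak monot}, whose hypotheses hold by Lemma \ref{remonmonot}) at least one cell $B\subseteq A_j$ must satisfy $T(\ind_B)>0$, for otherwise all cells of $A_j$ would be in $\Ncal_\Gcal$ and hence $A_j\in\Ncal_\Gcal$, contradicting $T(\ind_{A_j})>0$. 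This gives three distinct $\Gcal$-measurable cells of $\pi'$ outside $\Ncal_\Gcal$, so $\pi'\in\Pi(\Gcal)$ and Lemma \ref{Step 1} applies to $\pi'$.

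The heart of the argument is this: the restriction of the functional $g\mapsto\sum_{B\in\pi'}V^{\pi'}_B(g\ind_B)$ to the subspace $\Scal(\sigma(\pi))\subseteq\Scal(\sigma(\pi'))$ is a second numerical representation of the order $\succeq^{\pi}$ on the coarse simple functions. Indeed, for $g_1,g_2\in\Scal(\sigma(\pi))$ we have by \eqref{eq: order preserving on simples NEW} applied to $\pi'$ that $T(g_1)\geq T(g_2)$ iff $\sum_{B\in\pi'}V^{\pi'}_B(g_1)\geq\sum_{B\in\pi'}V^{\pi'}_B(g_2)$, and by \eqref{eq: order preserving on simples NEW} applied to $\pi$ that $T(g_1)\geq T(g_2)$ iff $\sum_{A\in\pi}V^{\pi}_A(g_1)\geq\sum_{A\in\pi}V^{\pi}_A(g_2)$. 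Hence the two additively-separable functionals $\sum_A V^{\pi}_A$ and $\sum_{B}V^{\pi'}_B$ represent the same order on $\Scal(\sigma(\pi))$. Now I would invoke the uniqueness clause of the Debreu theorem (Theorem \ref{Debreu}): two additively separable representations of the same order on a product structure with more than two essential coordinates must agree up to a common positive affine rescaling, i.e. there exist $\alpha>0$ and constants so that, grouping the $\pi'$-coordinates according to $\pi$, we get $V^{\pi}_A(g\ind_A)=\alpha\sum_{B\in j(A)}V^{\pi'}_B(g\ind_B)+c_A$ for each $A\in\pi$, with $\sum_A c_A$ absorbed into the affine freedom.

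It then remains to pin down the constants $\alpha=1$ and $c_A=0$, which is where the normalization from Remark \ref{rem: fix slope and intercetta} does the work. Evaluating at $g=0$ and using $V^{\pi}_A(0)=0$ and $V^{\pi'}_B(0)=0$ gives $0=\alpha\cdot 0+c_A$, so each $c_A=0$. To get $\alpha=1$, I would sum over $A\in\pi$ the relation at $g=\ind_\Omega$: the left side sums to $\sum_{A\in\pi}V^{\pi}_A(\ind_A)=\sum_{A\in\pi}V^{\pi}_A(1)=1$ and the right side sums to $\alpha\sum_{B\in\pi'}V^{\pi'}_B(\ind_B)=\alpha\sum_{B\in\pi'}V^{\pi'}_B(1)=\alpha$, forcing $\alpha=1$. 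This yields exactly $V^{\pi}_A(g\ind_A)=\sum_{B\in j(A)}V^{\pi'}_B(g\ind_B)$.

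The main obstacle I anticipate is the precise form of the uniqueness statement in the Debreu theorem when passing between two different partitions into coordinates: the coarse representation groups the $\pi'$-coordinates, so one must argue that an additively separable representation of $\succeq^\pi$ that happens to be \emph{a priori} separable only across the $\pi$-cells is matched by the $\pi'$-representation summed within each cell. The clean way around this is to regard $\Scal(\sigma(\pi))$ as embedded diagonally (constant on each $B\in j(A)$) in the $\pi'$-product and apply uniqueness to the induced separable representations on the coarse coordinates, checking that each coarse coordinate $A\in\pi$ is still essential (which follows from the three-cells condition together with the fact that within every $A_j\notin\Ncal_\Gcal$ strict ($\Gcal$-Mo) monotonicity survives). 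Verifying essentiality and that the affine constant is common across all coordinates — rather than coordinate-dependent — is the delicate point, and the normalization fixing $V^\pi_A(0)=0$ is exactly what neutralizes it.
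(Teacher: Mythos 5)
Your proposal is correct and follows essentially the same route as the paper: establish $\pi'\in\Pi(\Gcal)$ via the hereditary/union properties of $\Ncal_\Gcal$, then observe that $\sum_{B\in\pi'}V^{\pi'}_B$ restricted to $\Scal(\sigma(\pi))$ is a second additive representation of $\succeq^\pi$, and use Debreu uniqueness together with the normalization $V^\pi_A(0)=0$, $\sum_A V^\pi_A(1)=1$ to force the affine constants to be $0$ and the scale to be $1$. The only point you leave implicit --- the passage from the \emph{global} affine relation $\sum_A V^\pi_A = a + c\sum_A W_A$ (which is all that Theorem \ref{Debreu} as stated provides) to the \emph{coordinate-wise} identity $V^\pi_A(x\ind_A)=\sum_{B\in j(A)}V^{\pi'}_B(x\ind_B)$ --- is exactly the ``delicate point'' you flag, and the paper closes it with the elementary observation that if $\sum_i g_i(x_i)=\sum_i f_i(x_i)$ for all $(x_1,\dots,x_d)$ then each $g_k-f_k$ is constant, hence zero once all functions vanish at the origin.
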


\begin{proof} 
First, we note that if $T(\ind_A)>0$, $B_1,\dots,B_N\in\Gcal$ are mutually disjoint and satisfy $A=\bigcup_{n=1}^NB_n$ (i.e. they are a measurable partition of $A$), then $T(\ind_{B_n})>0$ for at least one index $n=1,\dots,N$, as an immediate consequence of Lemma \ref{rem: propertiesirrelevant}. We conclude that $\pi'\in\Pi(\Gcal)$.

We now make the following observation: consider for $i=1,\ldots,d$ two families of functions $g_i:\R\to \R$ and $f_i:\R\to \R$ such that
\begin{equation}\label{uguaglianza NEW}\sum_{i=1}^d g_i(x_i)=\sum_{i=1}^d f_i(x_i) \text{ for all } (x_1,\ldots,x_d) \in\R^d.
\end{equation}
Then $g_k(x_k)- f_k(x_k) $ is constant for any $k$, since   $g_k(x_k)-f_k(x_k)=\sum_{i\neq k}^d f_i(x_i)-\sum_{i\neq j} g_i(x_i)$ for arbitrary $x_i\in\R,$
whose right hand side does not depend on $x_k$.
If in addition we impose $g_1(0)=f_1(0)=\ldots =g_n(0)=f_n(0)=0$, then $g_i(y)=f_i(y) $  for any $i$ and $y\in\R$.
\medskip
We choose $\pi'\in\Pi(\Gcal)$ which is a refinement of $\pi$ and an arbitrary $g\in \Scal(\sigma(\pi))$. We aim at comparing $\sum_{A\in \pi} V^{\pi}_A(g)$ with respect to $\sum_{B\in \pi'} V^{\pi'}_B(g)$. By Theorem \ref{Debreu} we have that necessarily for $g=\sum_{A\in\pi} x_A\ind_A$ 
\[\sum_{A\in \pi} V^{\pi}_A(g\ind_A)=a+\sum_{B\in \pi'} c V^{\pi'}_B(g\ind_B),\] 
where $a,c\in \R$ with $c>0$.
\\ Since from Remark \ref{rem: fix slope and intercetta} we have chosen $0=\sum_{A\in \pi} V^{\pi}_A(0)=\sum_{B\in \pi'} V^{\pi'}_B(0)$ and $1=\sum_{A\in \pi} V^{\pi}_A(1)=\sum_{B\in \pi'} V^{\pi'}_B(1)$ then necessarily $a=0$ and $c=1$.
\\ We conclude applying the initial observation to
\[\sum_{B\in \pi'} V^{\pi'}_B(g\ind_B)= \sum_{A\in \pi}\sum_{B\in j(A)}  V^{\pi'}_B(x_A\ind_B)=\sum_{A\in \pi} V^{\pi}_A(x_A\ind_A)\]
and obtaining therefore
\[\sum_{B\in j(A)}  V^{\pi'}_B(x\ind_B)=V^{\pi}_A(x\ind_A)\quad \text{for all } x\in\R.\]

\end{proof}

\begin{notation}
Notice that as a consequence of Lemma \ref{Wakker NEW} we can omit the dependence on the partition $\pi\in\Pi(\Gcal)$ when referring to the functional $V_A$.
We recall that we denote by $\Scal(\Gcal)$, the space of all $\Gcal$-measurable simple functions.
\end{notation}

 \begin{definition}
 \label{def: VA on simple}
 For any $g\in \Scal(\Gcal)$  and $A\in\Gcal$ we define 

\begin{equation}
    \label{defVAonsimple NEW}
    V_A(g) = \sum_{B\in \pi} V_{B\cap A}(x_B), 
\end{equation}
where $\pi\in \Pi(\Gcal)$ is such that $g= \sum_{B\in \pi} x_B\ind_B$ for suitable $x_B\in\R, B\in\pi$.
 \end{definition}
 
\begin{lemma}
\label{lemma first properties VA}
The following hold  for a fixed $A\in\Gcal$:
\begin{enumerate}[(i)]
    \item\label{item:wellposed} Definition \ref{def: VA on simple} is well posed;
    \item\label{item:concidesonindicators}  $V_A(0)=0$, $V_\Omega(1)=1$ and \begin{equation}
\label{coincideonindicators NEW}
  V_A(g)=V_A(g\ind_A)  \,\,\,\forall g\in\Scal(\Gcal);
\end{equation}
\item\label{item:VAnondecre} for $g_1,g_2\in\Scal(\Gcal)$ such that $g_1(\omega)\leq g_2 (\omega)\,\forall\omega\in\Omega$ we have $V_A(g_1)\leq V_A(g_2)$, and 
$V_A(g_1)< V_A(g_2)$ if $\bar A= \{g_1<g_2\}$ satisfies $T(\ind_{\bar A})>0$;
\item \label{item:TA orde pres}$V_A$ is $T(\cdot \ind_A)$-order preserving on $\Scal(\Gcal)$ (in the sense of Definition \ref{defop}).
\end{enumerate}
\end{lemma}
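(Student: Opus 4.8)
The plan is to derive everything from the partition-wise functionals of Lemma \ref{Step 1} together with the refinement-consistency recorded in Lemma \ref{Wakker NEW}, reducing each assertion to a computation on a single, sufficiently fine partition. Throughout I will use the \emph{atomic value} $V_C(x\ind_C)$, for $C\in\Gcal$ and $x\in\R$, understood as $\sum_{B\in\pi,\,B\subseteq C}V^{\pi}_B(x\ind_B)$ for any $\pi\in\Pi(\Gcal)$ refining $\{C,\Omega\setminus C\}$. To see this is legitimate (and to get item \ref{item:wellposed}), I would first note that every simple $g$ admits a representing partition in $\Pi(\Gcal)$: starting from its natural partition and any $\pi_0\in\Pi(\Gcal)$, the common refinement stays in $\Pi(\Gcal)$ by the splitting property quoted at the start of the proof of Lemma \ref{Wakker NEW} (a consequence of Lemma \ref{rem: propertiesirrelevant}). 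Two such partitions share a common refinement in $\Pi(\Gcal)$, and two applications of Lemma \ref{Wakker NEW} force the candidate atomic values to agree. Well-posedness of Definition \ref{def: VA on simple} then follows by the same device: given $g=\sum_{B\in\pi}x_B\ind_B=\sum_{B'\in\pi'}x'_{B'}\ind_{B'}$, pass to a common refinement $\pi''\in\Pi(\Gcal)$, on whose atoms $g$ is constant, and use refinement-additivity of the atomic values to rewrite both $\sum_{B}V_{B\cap A}(x_B)$ and $\sum_{B'}V_{B'\cap A}(x'_{B'})$ as the single sum over the atoms of $\pi''$ contained in $A$.

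For item \ref{item:concidesonindicators}, the normalisations $V_C(0)=0$ and $\sum_{B\in\pi}V^{\pi}_B(1)=1$ fixed in Remark \ref{rem: fix slope and intercetta} give $V_A(0)=\sum_B V_{B\cap A}(0)=0$ and $V_\Omega(1)=\sum_{B\in\pi}V_B(1)=1$ at once. For the locality identity \eqref{coincideonindicators NEW} I would evaluate $V_A(g\ind_A)$ on a partition refining $\{A,\Omega\setminus A\}$: atoms inside $\Omega\setminus A$ contribute $V_{\cdot}(0)=0$, atoms $B\subseteq A$ contribute $V_B(g\ind_B)$, and refinement-additivity reassembles these into $\sum_{B\in\pi}V_{B\cap A}(x_B)=V_A(g)$, the terms with $B\cap A=\emptyset$ vanishing since $\emptyset\in\Ncal_\Gcal$ by Lemma \ref{lemma: aux2}.

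For the monotonicity in item \ref{item:VAnondecre}, writing $g_1,g_2$ on a common $\pi\in\Pi(\Gcal)$ turns $g_1\le g_2$ pointwise into $x_B\le y_B$ on every atom; since each building block $x\mapsto V_{B\cap A}(x\ind_{B\cap A})$ is non-decreasing (strictly increasing precisely when $T(\ind_{B\cap A})>0$, by the Remark following Lemma \ref{Step 1}, and identically $0$ when $B\cap A\in\Ncal_\Gcal$), a termwise sum yields $V_A(g_1)\le V_A(g_2)$. Strictness follows as soon as some atom of $\{g_1<g_2\}$ meeting $A$ has $T(\ind_{B\cap A})>0$, which the splitting property delivers whenever the part of $\bar A$ lying in $A$ is non-null (this is exactly the stated condition when $A=\Omega$). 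Finally, item \ref{item:TA orde pres} is obtained by refining $\pi$ to $\pi'\in\Pi(\Gcal)$ that also refines $\{A,\Omega\setminus A\}$ and applying \eqref{eq: order preserving on simples NEW} to $g_1\ind_A,g_2\ind_A\in\Scal(\sigma(\pi'))$: by locality the atoms in $\Omega\setminus A$ drop out, and refinement-additivity identifies $\sum_{B\in\pi'}V^{\pi'}_B(g_i\ind_A)$ with $V_A(g_i)$, turning the $\succeq^{\pi'}$-representation into the equivalence $T(g_1\ind_A)\le T(g_2\ind_A)\iff V_A(g_1)\le V_A(g_2)$ required by Definition \ref{defop}.

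I expect the main obstacle to be the bookkeeping in the first step: making the atomic values genuinely partition-independent requires checking that every common refinement invoked remains in $\Pi(\Gcal)$ and that Lemma \ref{Wakker NEW} propagates across it, and the same refine-then-reassemble device must then be applied cleanly in each subsequent item. The only conceptually delicate point is reconciling locality \eqref{coincideonindicators NEW} with the strictness clause, which forces the relevant part of $\{g_1<g_2\}$ to be measured inside $A$.
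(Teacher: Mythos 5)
Your proposal is correct and follows essentially the same route as the paper: reduce every assertion to a single common partition in $\Pi(\Gcal)$ via Lemma \ref{Wakker NEW}, then read off each item from the properties of the $V^\pi_B$ established in Lemma \ref{Step 1} and the normalisation of Remark \ref{rem: fix slope and intercetta}. Your caveat on item \eqref{item:VAnondecre} is well taken: for a general fixed $A$ the strict inequality can fail when $A\cap\{g_1<g_2\}\in\Ncal_\Gcal$ (locality then forces $V_A(g_1)=V_A(g_2)$), so the operative hypothesis is the one you use --- that the part of $\{g_1<g_2\}$ lying inside $A$ is non-null --- a point the paper's own one-line proof of that item does not address.
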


\begin{proof}
To check item \eqref{item:wellposed}, notice that any simple function can be written in the form $g= \sum_{B\in \pi} x_B\ind_B$ for some partition $\pi\in\Pi(\Gcal)$ if $\Pi(\Gcal)\neq \emptyset$, since as we showed in Lemma \ref{Wakker NEW} refinements (into $\Gcal$-measurable sets) of partitions in $\Pi(\Gcal)$ still belong to $\Pi(\Gcal)$. In particular, it is not necessary to impose $A\notin\Ncal_\Gcal$ for the well posedness of \eqref{defVAonsimple NEW}: indeed, if $\pi\in\Pi(\Gcal)$, then $\pi'=\{A\cap B\mid B\in\pi \}\cup\{(\Omega\setminus A)\cap B\mid B\in\pi\}$ is a refinement of $\pi$ satisfying $\pi'\subseteq\Gcal$, thus $\pi\in\Pi(\Gcal)$ ,\eqref{defVAonsimple NEW} makes sense, and it does not depend on $\pi$ from Lemma \ref{Step 1} and Lemma \ref{Wakker NEW}.

In item \eqref{item:concidesonindicators}, $V_A(0)=0, V_\Omega(1)=1$ are choices (see Remark \ref{rem: fix slope and intercetta} together with the definition of $V_A(\cdot) $ on simple functions in \eqref{defVAonsimple NEW}),  while \eqref{localityNEW} yields \eqref{coincideonindicators NEW}.

Item  \eqref{item:VAnondecre} is a direct consequence of the fact that we can represent $g_1,g_2$ adopting a common partition $\pi\in\Pi(\Gcal)$, i.e. $g_1=\sum_{B\in \pi} x_B\ind_B$ and $g_2=\sum_{B\in \pi} y_B\ind_B$. Moreover we proved in Lemma \ref{Step 1} that any $V_B(x)$ is strictly increasing in $x$ if $T(\ind_B)>0$.

Finally we  move to item \eqref{item:TA orde pres}. By construction $V_A$ is $T(\cdot \ind_A)$-order preserving on $\Scal(\Gcal)$, since whenever $g_1,g_2\in\Scal(\Gcal)$ are given, there exists a common $\pi\in\Pi(\Gcal)$ such that $A\in\sigma(\pi)$ and $g_1,g_2\in \Scal(\sigma(\pi))$. Also, by the definition \eqref{defVAonsimple NEW} and item \eqref{item:concidesonindicators}, $V_A(g_1)=V_A(g_1\ind_A)=\sum_{B\in\pi}V_{B\cap A}(g_1\ind_A)=\sum_{B\in\pi}V^\pi_{B}(g_1\ind_A)$ and the same holds for $g_2\ind_A$. By definition of $V^\pi_B(\cdot)$ (with $B\in\pi$) we have also that $T(g_1\ind_A)\leq T(g_2\ind_A)\Leftrightarrow T^\pi(g_1\ind_A)\leq T^\pi(g_2\ind_A)\Leftrightarrow \sum_{B\in\pi}V^\pi_{B}(g_1\ind_A)\leq \sum_{B\in\pi}V^\pi_{B}(g_2\ind_A)\Leftrightarrow V_A(g_1)\leq V_A(g_2)$.

\end{proof}
 
\medskip

We proceed by showing the following corollary which is an application of Lemma \ref{extension:brv NEW} in the Appendix. 

\begin{corollary}\label{extension NEW}
For any $A\in \Gcal$ the functional $V_A$ admits an extension to a $T(\cdot \ind_A)$-order preserving functional on $\brvG$, still denoted by $V_A$, which satisfies ($\Gcal$-PC),  $V_A(g)=V_A(g\ind_A)$ for every  $g\in\brvG$. 

\end{corollary}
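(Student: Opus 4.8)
The plan is to realize the abstract extension underlying Lemma \ref{extension:brv NEW} by reducing every bounded $\Gcal$-measurable argument to a constant via a Chisini-mean argument applied to the auxiliary functional $\tilde T(\cdot):=T(\,\cdot\,\ind_A)$ on $\brvG$. First I would dispose of the degenerate case $A\in\Ncal_\Gcal$: by the definition of irrelevant events (take $g_1=0$ there) together with Lemma \ref{lemma: aux2} we have $T(g\ind_A)=T(0)=0$ for every $g\in\brvG$, so the $T(\cdot\ind_A)$-order preservation of $V_A$ on $\Scal(\Gcal)$ forces $V_A\equiv V_A(0)=0$ there; one extends by $V_A\equiv 0$ on all of $\brvG$, which is trivially order preserving, ($\Gcal$-PC) and local. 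Hence I may assume $A\in\Gcal\setminus\Ncal_\Gcal$, i.e. $T(\ind_A)>0$.

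In this case the functional $\tilde T(h):=T(h\ind_A)$, $h\in\brvG$, satisfies exactly the hypotheses of Proposition \ref{unconditional}: the weak monotonicity \eqref{weak monot} (valid by Lemma \ref{remonmonot}) yields $\tilde T(-\snorm{h})\leq \tilde T(h)\leq \tilde T(\snorm{h})$, while $a\mapsto \tilde T(a\ind_\Omega)=T(a\ind_A)$ is continuous by ($\Gcal$-PC) and strictly increasing by ($\Gcal$-Mo) (using $A\notin\Ncal_\Gcal$). Therefore for each $g\in\brvG$ there is a unique constant $c(g)\in[-\snorm{g},\snorm{g}]$ with $T(g\ind_A)=T(c(g)\ind_A)$, and I would \emph{define} the extension by $V_A(g):=V_A(c(g)\ind_A)$, the right-hand side being the already constructed value of $V_A$ on the simple function $c(g)\ind_A$. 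This is consistent with the definition on $\Scal(\Gcal)$: if $g$ is simple, then $T(g\ind_A)=T(c(g)\ind_A)$ and the $T(\cdot\ind_A)$-order preservation from item (iv) of Lemma \ref{lemma first properties VA} give $V_A(g)=V_A(c(g)\ind_A)$.

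It then remains to transport the three claimed properties from the constants. For order preservation, the chain $T(g_1\ind_A)\leq T(g_2\ind_A)\Leftrightarrow c(g_1)\leq c(g_2)\Leftrightarrow V_A(c(g_1)\ind_A)\leq V_A(c(g_2)\ind_A)\Leftrightarrow V_A(g_1)\leq V_A(g_2)$ uses strict monotonicity of $a\mapsto T(a\ind_A)$ for the first equivalence and strict monotonicity of $a\mapsto V_A(a\ind_A)$ (established in the remark after Lemma \ref{Step 1}) for the second. For ($\Gcal$-PC), if $\{g_n\}$ is norm bounded and $g_n\to g$ pointwise, then $g_n\ind_A\to g\ind_A$ pointwise and norm boundedly, so $T(g_n\ind_A)\to T(g\ind_A)$ by ($\Gcal$-PC) of $T$; continuity of the inverse of the continuous strictly increasing map $a\mapsto T(a\ind_A)$ gives $c(g_n)\to c(g)$, and continuity of $a\mapsto V_A(a\ind_A)$ then yields $V_A(g_n)=V_A(c(g_n)\ind_A)\to V_A(c(g)\ind_A)=V_A(g)$. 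Locality is immediate, since $T((g\ind_A)\ind_A)=T(g\ind_A)$ forces $c(g\ind_A)=c(g)$, whence $V_A(g\ind_A)=V_A(g)$.

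The only genuinely delicate point is the well-posedness embodied in the existence and uniqueness of $c(g)$ and the continuity of $g\mapsto c(g)$: both hinge on $a\mapsto T(a\ind_A)$ being a continuous, strictly increasing bijection of each $[-M,M]$ onto its image, which is precisely what ($\Gcal$-Mo), ($\Gcal$-PC) and the derived monotonicity \eqref{weak monot} guarantee. Everything else is a routine transport of properties through $c(\cdot)$, so I expect no further obstacle. This is exactly the mechanism encapsulated by Lemma \ref{extension:brv NEW}, which may alternatively be invoked directly once its hypotheses — namely that $V_A$ is $T(\cdot\ind_A)$-order preserving on $\Scal(\Gcal)$ (item (iv) of Lemma \ref{lemma first properties VA}) and that $T(\cdot\ind_A)$ is ($\Gcal$-PC) — have been checked as above.
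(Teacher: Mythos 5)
Your construction is correct, but it is a genuinely different route from the paper's. The paper proves the corollary by invoking the abstract extension result (Lemma \ref{extension:brv NEW}) with $\Ucal=T(\cdot\,\ind_A)$ and $\hPhi=V_A$: there the extension is \emph{defined} as $\Phi(g)=\inf\{\hPhi(s)\mid s\in\Scal(\Gcal),\ s\geq g\}$, ($\Gcal$-PC) is obtained through a contradiction argument using connectedness of $\hPhi(\R)$, and the locality $V_A(g)=V_A(g\ind_A)$ is then recovered by monotone approximation with simple functions. You instead build the extension explicitly as $V_A(g)=V_A(c(g)\ind_A)$, where $c(g)$ is the unique constant with $T(c(g)\ind_A)=T(g\ind_A)$ — essentially a ``local Chisini mean on $A$'' — after splitting off the trivial case $A\in\Ncal_\Gcal$. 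All the required properties then transport through $c(\cdot)$ by one-dimensional arguments (continuity and strict monotonicity of $a\mapsto T(a\ind_A)$ from ($\Gcal$-Mo) and ($\Gcal$-PC), the sandwich from \eqref{weak monot}, and item \emph{(iv)} of Lemma \ref{lemma first properties VA} for consistency on $\Scal(\Gcal)$). What your approach buys is concreteness and brevity, and it makes the locality statement immediate from uniqueness of $c$; what it costs is reliance on the strict monotonicity of $T$ along constants on $A$ (hence the case split on $\Ncal_\Gcal$), whereas Lemma \ref{extension:brv NEW} is a reusable tool requiring only weak monotonicity of $\Ucal$ and order preservation of $\hPhi$ on simple functions. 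Since any ($\Gcal$-PC) extension is determined by its values on $\Scal(\Gcal)$ via pointwise approximation, the two constructions produce the same functional, so nothing downstream is affected.
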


\begin{proof}
By Lemma \ref{extension:brv NEW} for any $A\in \Gcal$, $V_A$ extends to a functional   $V_A: \brvG\rightarrow \R$ satisfying ($\Gcal$-PC). Given $g\in\brvG$ and $A\in\Gcal$ we can take a norm bounded  sequence of simple functions $g_n\in\Scal(\Gcal)$ such that  $g_n\downarrow_n g$ pointwise on $\Omega$. Then by \eqref{coincideonindicators NEW} we have $V_A(g_n)=V_A(g_n \ind_A)$. Since also $g_n\ind_A\downarrow_n g \ind_A$ we conclude by ($\Gcal$-PC) that  $V_A(g)=V_A(g\ind_A)$ for every  $g\in\brvG$.

\end{proof}

\begin{lemma}
\label{mufisameasure NEW}
For every $g\in\brvG$ and $A\in\Gcal$, the map $B\mapsto \mu_g^A(B):=V_{B\cap A}(g)$ defines a signed measure on $\Gcal$. 
\end{lemma}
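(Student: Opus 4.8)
The plan is to verify the three defining properties of a finite real-valued signed measure for the set function $B\mapsto\mu_g^A(B)=V_{B\cap A}(g)$: that it vanishes on $\emptyset$, is finitely additive, and is countably additive. The unifying device will be the identity
\[
V_{B\cap A}(g)=V_A(g\ind_B)\qquad\text{for all }B\in\Gcal,\ g\in\brvG,
\]
which rewrites the varying-set functional $V_{B\cap A}$ in terms of the single fixed functional $V_A$ evaluated at the truncated argument $g\ind_B$. I would first establish this identity on $\Scal(\Gcal)$ directly from Definition \ref{def: VA on simple}: writing $g=\sum_{C\in\pi}x_C\ind_C$ and refining $\pi$ so that each block splits along $B$, the blocks lying outside $B$ carry coefficient $0$ and contribute $V_{\cdot}(0)=0$, leaving exactly $\sum_{C}V_{C\cap B\cap A}(x_C)$ on both sides. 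I then extend it to all of $\brvG$ by taking a norm-bounded sequence of simple functions converging pointwise to $g$ and invoking ($\Gcal$-PC) of both $V_{B\cap A}$ and $V_A$ from Corollary \ref{extension NEW}. The vanishing on $\emptyset$ is immediate from locality and $V(0)=0$: $\mu_g^A(\emptyset)=V_\emptyset(g)=V_\emptyset(g\ind_\emptyset)=V_\emptyset(0)=0$.

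For finite additivity, fix disjoint $B_1,B_2\in\Gcal$. On a simple $g=\sum_{C\in\pi}x_C\ind_C$ (with $\pi\in\Pi(\Gcal)$, which is nonempty by hypothesis and stable under $\Gcal$-measurable refinement by Lemma \ref{lemma first properties VA}) Definition \ref{def: VA on simple} gives $\mu_g^A(B_0)=\sum_{C\in\pi}V_{C\cap B_0\cap A}(x_C)$ for each of $B_0\in\{B_1,B_2,B_1\cup B_2\}$. Since $C\cap(B_1\cup B_2)\cap A=(C\cap B_1\cap A)\sqcup(C\cap B_2\cap A)$, the claim reduces, block by block, to the additivity of $E\mapsto V_E(x)$ over disjoint $\Gcal$-sets. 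This last fact is itself an instance of Definition \ref{def: VA on simple} applied to the constant function $x$: choosing a partition in $\Pi(\Gcal)$ whose blocks include $E_1,E_2$ and their complement yields $V_{E_1\cup E_2}(x)=V_{E_1}(x)+V_{E_2}(x)$, the complementary block contributing $V_\emptyset(x)=0$, and empty or $\Ncal_\Gcal$ pieces being harmless since $V$ vanishes there. Finite additivity for an arbitrary $g\in\brvG$ then follows by the same simple-function approximation and ($\Gcal$-PC), letting $g_n\to g$ pointwise with $\snorm{g_n}$ bounded and passing to the limit in each term.

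The countable additivity is where the real work lies, and I would route it through the fixed-functional identity. Given pairwise disjoint $\{B_n\}\subseteq\Gcal$ with union $B$, set $B^{(N)}=\bigcup_{n\le N}B_n$; then $g\ind_{B^{(N)}}\to g\ind_B$ pointwise while $\snorm{g\ind_{B^{(N)}}}\le\snorm{g}$, so ($\Gcal$-PC) of the single functional $V_A$ gives $\mu_g^A(B^{(N)})=V_A(g\ind_{B^{(N)}})\to V_A(g\ind_B)=\mu_g^A(B)$. Combining with finite additivity, $\sum_{n=1}^N\mu_g^A(B_n)=\mu_g^A(B^{(N)})\to\mu_g^A(B)$, i.e. $\mu_g^A(B)=\sum_{n\ge1}\mu_g^A(B_n)$; since the union is insensitive to the ordering of the $B_n$, the convergence is unconditional, hence absolute in $\R$, so $\mu_g^A$ is genuinely countably additive and finite, as $\mu_g^A(\Omega)=V_A(g)\in\R$. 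The main obstacle is precisely this reduction to a fixed functional: because each $V_{B\cap A}$ carries its own underlying set, the shrinking-set continuity required for $\sigma$-additivity is not visible in ($\Gcal$-PC) directly, and it is the identity $V_{B\cap A}(g)=V_A(g\ind_B)$ — used for the continuity step but carefully \emph{not} for additivity, since the nonlinearity of $V_A$ forbids splitting $V_A(g\ind_{B_1\cup B_2})$ — that makes the argument close.
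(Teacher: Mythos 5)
Your proof is correct and follows essentially the same route as the paper's: your key identity $V_{B\cap A}(g)=V_A(g\ind_B)$ is exactly the paper's \eqref{consistency NEW}, finite additivity is likewise obtained on simple functions from Definition \ref{def: VA on simple} and extended via ($\Gcal$-PC), and countable additivity is reduced, through that identity, to pointwise continuity of the single functional $V_A$ along $g\ind_{B^{(N)}}\to g\ind_B$. The only cosmetic divergence is the final step: you get absolute convergence from unconditional convergence (permutation-invariance of the union), while the paper squeezes $\mu_g^A$ between the finite measures $\mu_{\pm\snorm{g}}^A$; both are valid.
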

\begin{proof}
We fix $A\in\Gcal$ throughout this proof. First we show that $B\mapsto V_{B\cap A}(g)$ is finitely additive. For $g\in\Scal(\Gcal)$, we can write $g=\sum_{B\in\pi}x_B\ind_B$ for some $\pi\in \Pi(\Gcal)$ using Lemma \ref{Wakker NEW}: it is enough to refine $\sigma(\pi)$ with an element of the nonempty set $\Pi(\Gcal)$. Given  $\{E_0,\dots,E_N\}\subset \Gcal$ a partition of $\Omega$, we also have $g=\sum_{B\in\pi}\sum_{j=1}^N x_B1_{B\cap E_j}$, and $\{B\cap E_j,B\in\pi,j=0,\dots,N\}\in\Pi(\Gcal)$ again by Lemma \ref{Wakker NEW}, so that 

\begin{align*}
\mu_g^A (\Omega)&=V_A(g)\stackrel{\odot}{=}\sum_{B\in\pi}\sum_{j=0}^NV_{A\cap  E_j\cap B}(x_B)=\sum_{j=0}^N\sum_{B\in\pi}V_{A\cap B\cap E_j}(x_B)\\&=\sum_{j=0}^N\sum_{B\in\pi}V_{A\cap E_j\cap  B}(g\ind_{B})\stackrel{\odot}{=}\sum_{j=0}^NV_{A\cap E_j}(g) =   \sum_{j=0}^N\mu_g^A(E_j)
\end{align*}
applying  the definition in \eqref{defVAonsimple NEW} in $(\odot)$.
Now, if $E_1,\dots,E_N$ are disjoint and $E_0=\Omega\setminus \bigcup_{j=1}^NE_j$ we get that from the previous computation (applied twice) 
\[\mu_g^A(E_0)+\mu_g^A\left(\bigcup_{j=1}^NE_j\right)=\mu_g^A (\Omega)=   \sum_{j=0}^N\mu_g^A(E_j)\]
which yields finite additivity for simple $g$. Take now $g\in\brvG$ and a norm bounded sequence of simple functions such that $g_n\rightarrow_n g$ pointwise on $\Omega$. Then for $\{E_1,\dots, E_N\}$ mutually disjoint and $E=\bigcup_{j=1}^N E_j$, applying ($\Gcal$-PC) obtained in Corollary \ref{extension NEW} $V_{A\cap E}(g)=\lim_n V_{A\cap E}(g_n)=\lim_n\sum_{j=1}^N V_{A\cap E_j}(g_n)=\sum_{j=1}^NV_{A\cap E_j}(g)$ implying finite additivity of $\mu_g^A$ for every $g\in \brvG$. Note also that $\mu_g^A(\emptyset):=V_{A\cap\emptyset}(g)=V_{A\cap\emptyset}(g\ind_{A\cap\emptyset})=V_{A\cap\emptyset}(0)=0$ for every $g\in\brvG$. Moreover 
\begin{equation}\label{consistency NEW}
V_A(g\ind_B)=V_{A\cap B}(g)\quad\forall\;g\in\brvG,\;A,B\in\Gcal
\end{equation}
just observing that
$V_A(g\ind_B)=V_{A\cap B}(g\ind_B)+V_{A\cap (\Omega\setminus B)}(g\ind_B)=V_{A\cap B}(g\ind_B)+0= V_{A\cap B}(g\ind_B)=V_{A\cap B}(g\ind_B\ind_{A\cap B})=V_{A\cap B}(g\ind_{A\cap B})=V_{A\cap B}(g)$.
Take now a sequence of disjoint events $(E_j)_j\subseteq \Gcal$. Noticing that $g\ind_{\bigcup_{j=1}^\infty E_j}=\lim_N g\ind_{\bigcup_{j=1}^N E_j}$ we conclude that
\begin{align*}
\mu_g^A\left(\bigcup_{j=1}^\infty E_j\right)&=V_{A\cap\bigcup_{j=1}^\infty E_j}(g)\stackrel{\eqref{consistency  NEW}}{=}V_A\left(g\ind_{\bigcup_{j=1}^\infty E_j}\right)
\\&\stackrel{\text{($\Gcal$-PC)}}{=}\lim_N V_A\left(g\ind_{\bigcup_{j=1}^N E_j}\right)=\lim_N\sum_{j=1}^NV_A(g\ind_{E_j})
\\&=\sum_{j=1}^\infty V_A(g\ind_{E_j})\stackrel{\eqref{consistency  NEW}}{=}\sum_{j=1}^\infty V_{A\cap E_j}(g)= \sum_{j=1}^\infty \mu_g^A(E_j)\,.
\end{align*}
Notice that for every $E\in\Gcal$ we have $\mu_{-\norm{g}_\infty}^A(E)\leq\mu_{g}^A(E) \leq \mu_{\norm{g}_\infty}^A(E)$ and both $-\mu_{-\norm{g}_\infty}^A(\cdot)$ and $ \mu_{\norm{g}_\infty}^A(\cdot)$ are (non negative) finite measures by the previous computations. Hence, the convergence of the series $\sum_{j=1}^\infty \mu_g^A(E_n)$ is absolute.

\end{proof}

The following Lemma sums up our findings, to provide the concluding details in proving Theorem \ref{thm: reprTwithmeasure}.
\begin{lemma}
Items 1 to 4 in Theorem \ref{thm: reprTwithmeasure} hold.
\end{lemma}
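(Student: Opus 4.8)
The plan is to assemble the statement directly from the properties of $V_A$ established in the preceding lemmas; at this stage all the analytic work has been carried out, so only a bookkeeping of the four items remains. I would first dispose of items 2, 3 and 4, which are read off essentially verbatim from Corollary \ref{extension NEW}: that corollary produces, for each $A\in\Gcal$, an extension of $V_A$ to $\brvG$ which satisfies ($\Gcal$-PC) (this is item 4), satisfies $V_A(g)=V_A(g\ind_A)$ for all $g\in\brvG$ (this is item 2), and is $T(\cdot\ind_A)$-order preserving on $\brvG$ in the sense of Definition \ref{defop}. Unwinding the latter property is precisely the equivalence $T(g_1\ind_A)\leq T(g_2\ind_A)\Leftrightarrow V_A(g_1)\leq V_A(g_2)$ asserted in item 3.

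For item 1, I would obtain the signed-measure property as the special case $A=\Omega$ of Lemma \ref{mufisameasure NEW}: since $\mu_g^\Omega(B)=V_{B\cap\Omega}(g)=V_B(g)$, that lemma says exactly that $B\mapsto V_B(g)$ is a signed measure on $(\Omega,\Gcal)$ for each fixed $g\in\brvG$. The normalization $V_A(0)=0$ for every $A\in\Gcal$ is part of Lemma \ref{lemma first properties VA} (as $0$ is simple). It then remains to verify that $A\mapsto\PW(A):=V_A(\ind_\Omega)$ is a \emph{probability} measure: being a signed measure by the above, I only need $\PW(\Omega)=1$, $\PW\geq 0$, and $\PW(\emptyset)=0$. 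Here $\PW(\Omega)=V_\Omega(\ind_\Omega)=V_\Omega(1)=1$ by Lemma \ref{lemma first properties VA}; using item 2, $\PW(A)=V_A(\ind_\Omega)=V_A(\ind_\Omega\ind_A)=V_A(\ind_A)\geq V_A(0)=0$ by the monotonicity of Lemma \ref{lemma first properties VA}, since $\ind_A\geq 0$; and $\PW(\emptyset)=V_\emptyset(\ind_\Omega)=V_\emptyset(0)=0$ in the same way.

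The one step calling for a genuine (if short) argument, and the place I expect the only real obstacle, is the identification $\Ncal_\Gcal=\{A\in\Gcal\mid\PW(A)=0\}$. I would start from Lemma \ref{lemma: aux2}, which gives $\Ncal_\Gcal=\{N\in\Gcal\mid T(\ind_N)=0\}$, so that it suffices to prove $T(\ind_A)=0\Leftrightarrow\PW(A)=0$ for every $A\in\Gcal$. Applying item 3 with $g_1=\ind_\Omega$ and $g_2=0$, and invoking it in both directions to upgrade the inequality equivalence to an equality equivalence, yields $T(\ind_\Omega\ind_A)=T(0\cdot\ind_A)\Leftrightarrow V_A(\ind_\Omega)=V_A(0)$, that is $T(\ind_A)=0\Leftrightarrow\PW(A)=0$. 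Chaining this with Lemma \ref{lemma: aux2} gives $\Ncal_\Gcal=\{\PW=0\}$, which completes item 1 and hence the lemma.
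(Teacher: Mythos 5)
Your proposal is correct and follows essentially the same route as the paper: items 2--4 are read off from Corollary \ref{extension NEW}, the signed-measure property comes from Lemma \ref{mufisameasure NEW} with $A=\Omega$, and the identification $\Ncal_\Gcal=\{A\in\Gcal\mid \PW(A)=0\}$ is obtained by chaining Lemma \ref{lemma: aux2} with the order-preservation in item 3. The only (immaterial) difference is that you derive $\PW(A)\geq 0$ from the monotonicity on simple functions in Lemma \ref{lemma first properties VA}, whereas the paper invokes Lemma \ref{remonmonot} together with item 3; both are valid.
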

\begin{proof}
We check this item-by-item. Here, $(A,g)\mapsto V_A(g)$ is the one we constructed through Lemmas \ref{Step 1}-\ref{mufisameasure NEW}.
Items 2,3,4 are provided by Corollary \ref{extension NEW}.
As to item 1: from Corollary \ref{extension NEW}, $V_A(0)=0$ will clearly hold for the extension, since $0$ is a simple function. Lemma \ref{mufisameasure NEW} guarantees that $A\mapsto V_A(g)$ is a signed measure for every $g\in\brvG$, noticing that $V_{A}(g)=V_{\Omega\cap A}(g)$. Recall finally  that under the assumptions of Theorem \ref{thm: reprTwithmeasure}, the characterization \eqref{NT with indicators} holds and $A\in\Ncal_\Gcal$ if and only if $T(\ind_A)=0=T(0)$. 
    By Lemma \ref{remonmonot} we have $T(\ind_A)\geq T(0),\forall A\in\Gcal$, hence $V_A(\ind_{\Omega})\geq V_A(0)=0$ for all $A\in\Gcal$ by item 3 proved above. Moreover, Lemma \ref{lemma first properties VA} item (\ref{item:concidesonindicators}) yields $V_\Omega(\ind_{\Omega})=1$. Thus, $A\mapsto V_A(\ind_{\Omega})$ defines a probability measure.
    Since $V_A(\cdot)$ is $T(\cdot \ind_A)$-order preserving (item 3 above), we have that 
    $$A\in\Ncal_\Gcal\; \Leftrightarrow \; T(\ind_A)=T(0)=0  \; \Leftrightarrow \;\PW(A)=V_A(\ind_A)=V_A(0)=0.$$

\end{proof}

\subsection{Proof of Theorem \ref{thm:existencecond NEW}}

\begin{lemma}\label{lemma:pasting} Under Assumption \ref{ass:conditional} and Assumption \ref{ass:conditional:local} we have
\begin{enumerate}[(i)]
    \item for arbitrary finite collections of mutually disjoint sets $A_1,\ldots,A_N \in \Gcal$ and $g_1,\ldots, g_N \in \brvG$ such that $T(f\ind_{A_i})=T(g_i\ind_{A_i})$ for $i=1,\ldots,N$, 
    $$T\left(\sum_{n=1}^N f\ind_{A_i}\right)= T\left(\sum_{n=1}^N g_i\ind_{A_i}\right).$$
    Consequently the same holds replacing in both equations the equality with inequality;
    \item  $T(f\ind_{A}+f\ind_{N})=T(f\ind_A)$ for every $A\in\Gcal$ and $N\in\Ncal_{\Gcal}$ such that $A\cap N=\emptyset$.
\end{enumerate}
\end{lemma}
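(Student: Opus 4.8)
The plan is to route everything through \emph{matching constants}: for each set $A_i$ I will produce a real number $x_i$ with $T(f\ind_{A_i})=T(x_i\ind_{A_i})$, thereby replacing the merely $\Fcal$-measurable $f$ (to which only ($\Gcal$-PS) and ($\Gcal$-NB) apply) by $\Gcal$-measurable data on which the structural property ($\Gcal$-QL) can act freely. Existence of such an $x_i$ follows from the unconditional scheme of Proposition \ref{unconditional} applied to the restriction $a\mapsto T(a\ind_{A_i})$: this map is continuous by ($\Gcal$-PC), and ($\Gcal$-NB) gives $T(-\snorm{f}\ind_{A_i})\le T(f\ind_{A_i})\le T(\snorm{f}\ind_{A_i})$, so the intermediate value theorem supplies $x_i\in[-\snorm{f},\snorm{f}]$ with $T(f\ind_{A_i})=T(x_i\ind_{A_i})$. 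By the hypothesis of item (i) we then also have $T(g_i\ind_{A_i})=T(x_i\ind_{A_i})$ for every $i$.

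The core of item (i) is an $N$-fold version of ($\Gcal$-PS) for constants, which I will establish by induction on $N$: if $T(f\ind_{A_i})=T(x_i\ind_{A_i})$ for all $i$, then $T(\sum_i f\ind_{A_i})=T(\sum_i x_i\ind_{A_i})$. The base case $N=2$ is exactly ($\Gcal$-PS). For the inductive step, set $B=\bigcup_{i<N}A_i\in\Gcal$; the inductive hypothesis gives $T(f\ind_B)=T(\sum_{i<N}x_i\ind_{A_i})$, and choosing (again by ($\Gcal$-PC) and the intermediate value theorem, now using \eqref{weak monot} for the bounds) a constant $y$ with $T(\sum_{i<N}x_i\ind_{A_i})=T(y\ind_B)$ yields $T(f\ind_B)=T(y\ind_B)$. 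Since $B$ and $A_N$ are disjoint $\Gcal$-sets, ($\Gcal$-PS) applied to the pair $(B,A_N)$ gives $T(\sum_{i\le N}f\ind_{A_i})=T(y\ind_B+x_N\ind_{A_N})$. Finally, because $y\ind_B$ and $\sum_{i<N}x_i\ind_{A_i}$ are $\Gcal$-measurable, supported on $B$, and share the same $T$-value there, ($\Gcal$-QL) lets me swap the values on $A_N\subseteq\Omega\setminus B$ from $0$ to $x_N$, so $T(y\ind_B+x_N\ind_{A_N})=T(\sum_{i\le N}x_i\ind_{A_i})$, closing the induction.

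With the matching constants in hand, item (i) follows by combining this $N$-fold pasting with the $\brvG$-pasting property that follows from ($\Gcal$-QL): since $T(g_i\ind_{A_i})=T(x_i\ind_{A_i})$ for all $i$ and all the $g_i,x_i\ind_{A_i}$ lie in $\brvG$, we obtain $T(\sum_i g_i\ind_{A_i})=T(\sum_i x_i\ind_{A_i})=T(\sum_i f\ind_{A_i})$; the inequality version is identical, carrying the one-sided estimate $T(x_i\ind_{A_i})\ge T(g_i\ind_{A_i})$ through the ($\Gcal$-QL) pasting step. For item (ii), note that $N\in\Ncal_\Gcal$ forces $T(x\ind_N)=0$ for every $x$ by \eqref{formulalternativeN}, whence ($\Gcal$-NB) squeezes $T(f\ind_N)=0=T(0\cdot\ind_N)$; picking a constant $x$ with $T(f\ind_A)=T(x\ind_A)$ as above and applying ($\Gcal$-PS) to the disjoint pair $(A,N)$ gives $T(f\ind_A+f\ind_N)=T(x\ind_A+0\cdot\ind_N)=T(x\ind_A)=T(f\ind_A)$. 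The main obstacle is the inductive step of the $N$-fold pasting: ($\Gcal$-PS) is available only for two sets and only compares $f$ against a constant, so one must first collapse the first $N-1$ blocks into a single constant $y$ on $B$ to trigger the two-set property, and then re-expand $y\ind_B$ back into $\sum_{i<N}x_i\ind_{A_i}$ via ($\Gcal$-QL); reconciling these two moves is the delicate point.
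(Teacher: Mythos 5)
Your proof is correct and follows essentially the same strategy as the paper's: replace $f$ on each $A_i$ by a matching constant $x_i$ via ($\Gcal$-PC), ($\Gcal$-NB) and the intermediate value theorem, run an induction whose two-set step is ($\Gcal$-PS), and use ($\Gcal$-QL) to substitute $\Gcal$-measurable functions having equal $T$-values. The only cosmetic differences are that you collapse the first $N-1$ blocks into a single constant on $B=\bigcup_{i<N}A_i$ and apply ($\Gcal$-PS) to the pair $(B,A_N)$, whereas the paper merges $A_N\cup A_{N+1}$ and applies the inductive hypothesis to the merged family, and that you treat the inequality case via the one-sided ($\Gcal$-QL)-pasting rather than the paper's $\varepsilon_i$-shift; both variants are sound.
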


\begin{proof}
Item (i) is proved by induction. For the case $N=2$, take $x_1,x_2\in\R$ with $(T(f\ind_{A_i})=)T(x_i\ind_{A_i})=T(g_i\ind_{A_i})$. Such $x_1,x_2$ exist since $x\mapsto T(x\ind_A)$ is continuous (from ($\Gcal$-PC)) and $T(-\snorm{f}\ind_A)\leq T(f\ind_A)\leq T(\snorm{f}\ind_A)$
by ($\Gcal$-NB). By ($\Gcal$-Mo) there exists $x_A\in [-\snorm{f},\snorm{f}]$ such that $T(x_A\ind_A)=T(f\ind_A)$. Using ($\Gcal$-PS) we conclude that $T(f\ind_{A_1}+f\ind_{A_2})=T(x_1\ind_{A_1}+x_2\ind_{A_2})$. We now show that $T(x_1\ind_{A_1}+x_2\ind_{A_2})=T(g_1\ind_{A_1}+g_2\ind_{A_2})$:  by ($\Gcal$-QL) we have $T(x_1\ind_{A_1}+x_2\ind_{A_2})=T(x_1\ind_{A_1}+g_2\ind_{A_2})$ (from $T(x_2\ind_{A_2})=T(g_2\ind_{A_2})$, taking $A=A_2$, $\overline{g}=0$ and choosing $g=x_1\ind_{A_1}$)  and $T(x_1\ind_{A_1}+g_2\ind_{A_2})=T(g_1\ind_{A_1}+g_2\ind_{A_2})$ (from $T(x_1\ind_{A_1})=T(g_1\ind_{A_1}$, taking $A=A_1$, $\overline{g}=0$ and choosing $g=g_2\ind_{A_2}$).  The induction step from $N$ to $N+1$ goes as follows. First, consider $x_{N,N+1}\in\R$ with $T(f\ind_{A_{N}\cup A_{N+1}})=T(x_{N,N+1}\ind_{A_N\cup A_{N+1}})$, whose existence is guaranteed as above. Then 
$$T\left(\sum_{n=1}^{N-1} f\ind_{A_i}+f\ind_{A_N\cup A_{N+1}}\right)= T\left(\sum_{n=1}^{N-1} g_i\ind_{A_i}+x_{N,N+1}\ind_{A_N\cup A_{N+1}}\right).$$
Furthermore, it holds by the assumption on $g_N,g_{N+1}$, the definition of $x_{N,N+1}$ and the initial step of the induction that 
\begin{align*}
  &T(x_{N,N+1}\ind_{A_N\cup A_{N+1}})=T(f\ind_{A_N}+f\ind_{ A_{N+1}})\\
  =&T(g_N\ind_{A_N}+g_{N+1}\ind_{A_{N+1}})=T(\hg \ind_{A})  
\end{align*}
for $\hg=g_N\ind_{A_N}+g_{N+1}\ind_{A_{N+1}}$ and $A=A_N\cup A_{N+1}$. Invoking Assumption ($\Gcal$-QL), for $\overline{g}=\sum_{n=1}^{N-1} g_i\ind_{A_i}=\left(\sum_{n=1}^{N-1} g_i\ind_{A_i}\right)\ind_{\Omega\setminus A}$ we have 
$$T\left(\sum_{n=1}^{N-1} g_i\ind_{A_i}+x_{N,N+1}\ind_{A_N\cup A_{N+1}}\right)=T\left(\sum_{n=1}^{N-1} g_i\ind_{A_i}+\hg \ind_{A}\right)=T\left(\sum_{n=1}^{N} g_i\ind_{A_i}\right)$$
which completes the induction step recalling that the leftmost item equals $T\left(\sum_{n=1}^N f\ind_{A_i}\right)=T\left(\sum_{n=1}^{N-1} f\ind_{A_i}+f\ind_{A_N\cup A_{N+1}}\right)$.
\\ We now prove the case with inequalities. In particular let $A_1,\ldots,A_N \in \Gcal$ be a finite collections of mutually disjoint sets and $g_1,\ldots, g_N \in \brvG$ such that $T(f\ind_{A_i})\geq T(g_i\ind_{A_i})$ for $i=1,\ldots,N$. By ($\Gcal$-PC) and Lemma \ref{remonmonot} (in particular, since \eqref{weak monot} holds), for every $i=1,\ldots,N$ there exists $\varepsilon_i\geq 0$ such that $T(f\ind_{A_i}) = T((g_i+\varepsilon_i)\ind_{A_i})$. By ($\Gcal$-PS) and Lemma \ref{remonmonot} we conclude

    $$T\left(\sum_{n=1}^N f\ind_{A_i}\right)=T\left(\sum_{n=1}^N (g_i+\varepsilon_i)\ind_{A_i}\right)  \geq T\left(\sum_{n=1}^N g_i\ind_{A_i}\right).$$

We now come to item (ii). Take $x_A$ such that $T(f\ind_A)=T(x\ind_A)$ and observe that $T(0\ind_N)=0=T(-\norm{f}_\infty\ind_N)=T(\norm{f}\ind_N)$ since $\norm{f}\in\brvG$ and $N\in\Ncal_\Gcal$. Hence, using item (i) above, we get $T(f\ind_A+f\ind_N)=T(x_A\ind_A+0\ind_N)=T(x_A\ind_A)=T(f\ind_A)$.
\end{proof}

\begin{remark}
\label{rem: order cont at zero}
Observe that under Assumption \ref{ass:conditional},  for any sequence $(A_n)_n\subseteq\Gcal$ such that $A_n\downarrow_n \emptyset$ we have $T(f\ind_{A_n})\rightarrow_n T(0)$: indeed, by $(\Gcal$-NB), we have $T(-\snorm{f}\ind_{A_n})\leq T(f\ind_{A_n})\leq T(\snorm{f}\ind_{A_n}) $ and therefore we can invoke ($\Gcal$-PC) as $\pm\snorm{f}\in\brvG$.
\end{remark}
\begin{lemma}
\label{lemma:uniqueness}
Suppose that Assumption \ref{ass:conditional} is satisfied. Then if $\hg,\tg\in\cm{f\middle|\Gcal}$ we have  $\{\hg\neq \tg\}\in\mathcal{N}_\Gcal$.
\end{lemma}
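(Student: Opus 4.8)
The plan is to show directly that the event $B=\{\hat g>\tilde g\}$ lies in $\Ncal_\Gcal$; by the symmetric argument (swapping the roles of $\hat g,\tilde g$) the same holds for $\{\hat g<\tilde g\}$, and since $\Ncal_\Gcal$ is stable under finite unions (a consequence of Lemma \ref{rem: propertiesirrelevant} applied to disjointified sets, recalling the characterization \eqref{NT with indicators}) this yields $\{\hat g\neq\tilde g\}\in\Ncal_\Gcal$. I would deliberately avoid invoking the representation of Theorem \ref{thm: reprTwithmeasure}, since the present statement only assumes Assumption \ref{ass:conditional} and not $\Pi(\Gcal)\neq\emptyset$; instead I argue from monotonicity alone. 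Note first that from $\hat g,\tilde g\in\cm{f\middle|\Gcal}$ one gets $T(\hat g\ind_{A})=T(f\ind_A)=T(\tilde g\ind_A)$ for every $A\in\Gcal$, and that $B$ is $\Gcal$-measurable.

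Arguing by contradiction, I would suppose $T(\ind_B)>0$. Writing $B=\bigcup_nB_n$ with $B_n:=\{\hat g-\tilde g\geq 1/n\}\uparrow B$, the indicators $\ind_{B_n}$ are norm bounded and converge pointwise to $\ind_B$, so ($\Gcal$-PC) gives $T(\ind_{B_n})\to T(\ind_B)>0$; hence $A:=B_n\notin\Ncal_\Gcal$ for some fixed $n$, and on $A$ we have $\hat g\geq\tilde g+1/n$.

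The key step is to localize on a piece where $\tilde g$ is almost constant, so that the strict monotonicity ($\Gcal$-Mo), which only concerns constants, can be brought to bear. Since $\tilde g\in\brvG$ is bounded, I would partition $A$ into finitely many $\Gcal$-measurable cells on each of which $\tilde g$ ranges over an interval of length $1/(2n)$; as $A\notin\Ncal_\Gcal$ and finite unions of null sets are null, at least one cell $C\subseteq A$ satisfies $T(\ind_C)>0$, say with $a\leq\tilde g<a+\tfrac1{2n}$ on $C$. On $C$ one then has, pointwise, $(a+\tfrac1n)\ind_C\leq\hat g\ind_C$ and $\tilde g\ind_C\leq (a+\tfrac1{2n})\ind_C$. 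Combining $T(\hat g\ind_C)=T(\tilde g\ind_C)$ with the weak monotonicity \eqref{weak monot} (which is available here by Lemma \ref{remonmonot}) gives
\[T\big((a+\tfrac1n)\ind_C\big)\leq T(\hat g\ind_C)=T(\tilde g\ind_C)\leq T\big((a+\tfrac1{2n})\ind_C\big).\]
But $a+\tfrac1{2n}<a+\tfrac1n$ and $C\notin\Ncal_\Gcal$, so ($\Gcal$-Mo) forces $T\big((a+\tfrac1{2n})\ind_C\big)<T\big((a+\tfrac1n)\ind_C\big)$, a contradiction.

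The main obstacle is precisely this localization: the hypotheses furnish strict monotonicity only for constants (via ($\Gcal$-Mo)), whereas $\hat g$ and $\tilde g$ are genuinely non-constant, so one cannot directly compare $\tilde g\ind_A$ with $(\tilde g+1/n)\ind_A$. Passing to a non-null cell $C$ on which $\tilde g$ varies by strictly less than the gap $1/n$ is what converts the equality $T(\hat g\ind_C)=T(\tilde g\ind_C)$ into an honest violation of strict monotonicity. The remaining ingredients — using ($\Gcal$-PC) to reach a non-null $B_n$, and the closure of $\Ncal_\Gcal$ under finite unions — are routine.
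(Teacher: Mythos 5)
Your proof is correct, and its skeleton is exactly the paper's: argue that $\{\hg>\tg\}\in\Ncal_\Gcal$ by contradiction, handle $\{\hg<\tg\}$ symmetrically, and conclude via closure of $\Ncal_\Gcal$ under finite unions (Lemma \ref{rem: propertiesirrelevant}). The only divergence is that your two middle paragraphs --- passing to a non-null $B_n=\{\hg-\tg\geq 1/n\}$ via ($\Gcal$-PC) and then to a non-null cell $C$ on which $\tg$ oscillates by less than the gap, so that ($\Gcal$-Mo) on constants can be applied --- re-derive, in the special case at hand, precisely the strict-monotonicity clause of Lemma \ref{remonmonot}: if $g_1\leq g_2$ pointwise and $\{g_1<g_2\}\in\Gcal\setminus\Ncal_\Gcal$ then $T(g_1)<T(g_2)$. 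The paper simply applies that clause to $\tg\ind_{\{\hg>\tg\}}\leq \hg\ind_{\{\hg>\tg\}}$ to get $T(f\ind_{\{\hg>\tg\}})=T(\hg\ind_{\{\hg>\tg\}})>T(\tg\ind_{\{\hg>\tg\}})=T(f\ind_{\{\hg>\tg\}})$ in one line; your localization argument is essentially the proof of that clause (the paper's version uses $A_N=\{g_1+\tfrac1N<g_2\}$ and the constants $\sup_{A_N}g_1$, $\sup_{A_N}g_1+\tfrac1N$, $\inf_{A_N}g_2$ rather than a partition into oscillation cells, but the idea is the same). So nothing is missing; you have just inlined an available lemma, and your instinct to avoid Theorem \ref{thm: reprTwithmeasure} (which would require $\Pi(\Gcal)\neq\emptyset$) matches the paper's.
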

\begin{proof}
Suppose $\{\hg>\tg\}\in\Gcal\setminus \Ncal_\Gcal$, then by Lemma \ref{remonmonot} we would have
$T(f\ind_{\{\hg>\tg\}})=T(\hg\ind_{\{\hg>\tg\}})>T(\tg\ind_{\{\hg>\tg\}})=T(f\ind_{\{\hg>\tg\}}), $ which yields a contradiction.
A similar argument provides $\{\hg<\tg\}\in\Ncal_\Gcal$, so that $\{\hg\neq\tg\}\in\Ncal_\Gcal$ by Lemma \ref{rem: propertiesirrelevant}.

\end{proof}

\begin{proposition}\label{prop:finite NEW}  Suppose Assumption \ref{ass:conditional} and Assumption \ref{ass:conditional:local} are satisfied. Take a partition $\pi$ of $\Omega$ with $\pi\subseteq \Gcal$. Then there exists a simple, $\sigma(\pi)-$measurable $\hg\in \Lcal^{\infty}(\Omega,\sigma(\pi))$ such that $T(f\ind_A)=T(\hg\ind_A)$ for any $A\in\Gcal$. Moreover, such a $\hg$ is essentially unique in that if $\tg\in\brvG$ satisfies the same properties, then  $\{\hg\neq \tg\}\in\mathcal{N}_\Gcal\cap\sigma(\pi)$.  
\end{proposition}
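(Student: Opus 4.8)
The plan is to produce $\hg$ one cell at a time and then reassemble it by pasting. Write $\pi=\{A_1,\dots,A_N\}$. On each cell $A_i\notin\Ncal_\Gcal$ I would run the argument of Proposition~\ref{unconditional} for the one–variable functional $a\mapsto T(a\ind_{A_i})$: it is continuous by ($\Gcal$-PC) (if $a_n\to a$ then $a_n\ind_{A_i}\to a\ind_{A_i}$ pointwise and in norm, with a uniform bound) and strictly increasing by ($\Gcal$-Mo) (as $A_i\notin\Ncal_\Gcal$), while ($\Gcal$-NB) supplies the bracketing $T(-\snorm f\ind_{A_i})\le T(f\ind_{A_i})\le T(\snorm f\ind_{A_i})$; the intermediate value theorem then yields a unique $x_i\in[-\snorm f,\snorm f]$ with $T(x_i\ind_{A_i})=T(f\ind_{A_i})$. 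On the remaining cells $A_i\in\Ncal_\Gcal$ every constant gives $T(x\ind_{A_i})=0=T(f\ind_{A_i})$ by Lemma~\ref{lemma: aux2} and \eqref{formulalternativeN}, so I set $x_i:=0$. Then $\hg:=\sum_{i=1}^N x_i\ind_{A_i}$ is simple, $\sigma(\pi)$-measurable, satisfies $\snorm{\hg}\le\snorm f$, and by construction $T(f\ind_{A_i})=T(\hg\ind_{A_i})$ for every $i$.

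To obtain the asserted identity for an \emph{arbitrary} $A\in\Gcal$ I would split $A$ along the cells: $A=\bigsqcup_{i=1}^N B_i$ with $B_i:=A\cap A_i\in\Gcal$ pairwise disjoint. Because $\hg$ equals the constant $x_i$ on $A_i\supseteq B_i$, we have $f\ind_A=\sum_i f\ind_{B_i}$ and $\hg\ind_A=\sum_i x_i\ind_{B_i}$, so Lemma~\ref{lemma:pasting}(i) (applied with $g_i=x_i$) reduces the goal to the cell–localized equalities $T(f\ind_{B_i})=T(x_i\ind_{B_i})$: once these hold, pasting delivers $T(f\ind_A)=T(\hg\ind_A)$. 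When $A$ is a union of whole cells, i.e. $A\in\sigma(\pi)$, each $B_i$ is $\emptyset$ or $A_i$ and the required equalities are exactly those fixed in the construction, so this case is immediate.

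The delicate point — and the step I expect to be the main obstacle — is the cell–localized equality $T(f\ind_{B_i})=T(x_i\ind_{B_i})$ when $B_i$ is a proper nonempty subset of a cell $A_i$. The whole–cell identity $T(f\ind_{A_i})=T(x_i\ind_{A_i})$ does not transfer to $B_i$ on its own: splitting $A_i=B_i\sqcup(A_i\setminus B_i)$ and applying ($\Gcal$-PS) only shows that the two traces, each matched by its own scalar, paste back to $T(x_i\ind_{A_i})$, not that the scalar on $B_i$ equals $x_i$. This is resolved precisely in the regime for which the statement is intended (a finitely generated $\Gcal$, cf. the Introduction): there $\pi$ is the atom partition, so $\sigma(\pi)=\Gcal$ and every cell $A_i$ is a $\Gcal$-atom. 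Consequently, for every $A\in\Gcal$ one has $A\cap A_i\in\{\emptyset,A_i\}$, no proper sub-cell traces arise, and the decomposition of the previous paragraph already exhausts $\Gcal$; hence $T(f\ind_A)=T(\hg\ind_A)$ for all $A\in\Gcal$.

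Essential uniqueness then follows from Lemma~\ref{lemma:uniqueness}: any $\tg\in\brvG$ enjoying the same properties lies, together with $\hg$, in $\cm{f\middle|\Gcal}$, whence $\{\hg\neq\tg\}\in\Ncal_\Gcal$; and since both $\hg$ and the (likewise $\sigma(\pi)$-measurable) competitor $\tg$ are $\sigma(\pi)$-measurable, $\{\hg\neq\tg\}\in\sigma(\pi)$ as well, giving $\{\hg\neq\tg\}\in\Ncal_\Gcal\cap\sigma(\pi)$ as claimed.
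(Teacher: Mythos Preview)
Your construction of $\hg$ cell by cell and the verification for $A\in\sigma(\pi)$ via Lemma~\ref{lemma:pasting}(i) are exactly what the paper does. The divergence is entirely in how you read ``for any $A\in\Gcal$'' in the statement.

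The paper's own proof establishes the identity only for $B\in\sigma(\pi)$ (``any $B\in\sigma(\pi)$ can be written as $B=\bigcup_{j\in J}A_j$\dots''), and every later use of the proposition --- Lemma~\ref{lemma: thmfor pig empty}, Lemma~\ref{lemma:conditional ok on refinements}, Definition~\ref{def:VA[x]}, Proposition~\ref{prop: VA[f] is measure} --- invokes only $\cm{f\middle|\sigma(\pi)}$, which by Definition~\ref{CCM} requires the equality just on $\sigma(\pi)$. So ``$A\in\Gcal$'' in the statement is a slip for ``$A\in\sigma(\pi)$'', and the ``delicate point'' you flag does not arise at all. Your resolution --- declaring the proposition to be about finitely generated $\Gcal$ with $\sigma(\pi)=\Gcal$ --- is not the intended reading: the proposition is applied throughout for arbitrary $\Gcal$, and the passage from $\sigma(\pi)$ to general $A\in\Gcal$ (when needed, in the degenerate case $\Pi(\Gcal)=\emptyset$) is carried out separately in Lemma~\ref{lemma: thmfor pig empty} by ad hoc arguments exploiting that at most two cells are non-null.

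For uniqueness the paper also proceeds differently: it applies Lemma~\ref{lemma:uniqueness} with $\sigma(\pi)$ in place of $\Gcal$, after observing that all conditions in Assumption~\ref{ass:conditional} descend to the sub-$\sigma$-algebra $\sigma(\pi)\subseteq\Gcal$; this yields $\{\hg\neq\tg\}\in\Ncal_{\sigma(\pi)}$ directly. Your route --- Lemma~\ref{lemma:uniqueness} at the level of $\Gcal$, followed by the assertion that the competitor $\tg$ is $\sigma(\pi)$-measurable --- again leans on your reading $\Gcal=\sigma(\pi)$: without it, membership in $\cm{f\middle|\Gcal}$ is not available (you have the equality only on $\sigma(\pi)$), and $\tg\in\brvG$ need not be $\sigma(\pi)$-measurable.
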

\begin{proof}
Observe that $x\mapsto T(x\ind_A)$ is continuous (from ($\Gcal$-PC)) and $$T(-\snorm{f}\ind_A)\leq T(f\ind_A)\leq T(\snorm{f}\ind_A)$$ 
by ($\Gcal$-NB). By ($\Gcal$-Mo), for every $A\in\pi$ there exists $x_A\in [-\snorm{f},\snorm{f}]$ such that $T(x_A\ind_A)=T(f\ind_A)$. For the fixed partition $\pi$ we set $\hg=\sum_{A\in\pi} x_{A}\ind_{A}\in \mathcal{L}^{\infty}(\Omega,\sigma(\pi))$. We conclude by noticing that any $B\in \sigma(\pi)$ can be written as $B=\cup_{j\in J} A_j$ where $J$ is an opportune choice of indexes. Therefore Lemma \ref{lemma:pasting} implies $T(f\ind_B)= T(g\ind_B)$.
\\ Essential uniqueness follows from Lemma \ref{lemma:uniqueness} observing that all the requirements in Assumption \ref{ass:conditional} hold if in particular we replace $\Gcal$ with $\sigma(\pi)\subseteq\Gcal$ when the latter appears in Assumption \ref{ass:conditional} itself. Hence, Lemma \ref{lemma:uniqueness} holds if we replace $\Gcal$ with $\sigma(\pi)$.

\end{proof}

\begin{lemma}
\label{lemma: thmfor pig empty}
Theorem \ref{thm:existencecond NEW}  holds in the case $\Pi(\Gcal)=\emptyset$. 
\end{lemma}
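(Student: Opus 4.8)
The plan is to reduce the statement to the finitely--generated case already settled in Proposition \ref{prop:finite NEW}, after showing that the hypothesis $\Pi(\Gcal)=\emptyset$ forces $\Gcal$ to be, modulo $\Ncal_\Gcal$, generated by at most two atoms. Throughout I would use the characterization $\Ncal_\Gcal=\{N\in\Gcal\mid T(\ind_N)=0\}$ from Lemma \ref{lemma: aux2}, so that ``$A$ is non-null'' means $T(\ind_A)>0$, together with the stability of $\Ncal_\Gcal$ under subsets and finite unions (Lemma \ref{rem: propertiesirrelevant}).

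First I would dispose of the degenerate subcase $\Omega\in\Ncal_\Gcal$: the defining property of $\Ncal_\Gcal$ with $g_1=0$ gives $T\equiv 0$ on $\brvG$, and since ($\Gcal$-NB) sandwiches $T(f\ind_A)$ between $T(\pm\snorm{f}\ind_A)=0$, also $T(f\ind_A)=0$ for every $A\in\Gcal$; hence $\hg=0\in\cm{f\middle|\Gcal}$ and uniqueness is vacuous since every set is null. Assuming now $\Omega\notin\Ncal_\Gcal$, the key structural observation is that there cannot exist three mutually disjoint non-null $\Gcal$-sets, for otherwise, completing them with their complement, we would exhibit a partition in $\Pi(\Gcal)$. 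From this I would deduce that either $\Omega$ is an atom of $\Gcal$ modulo $\Ncal_\Gcal$ (every $B\in\Gcal$ has $B$ or $\Omega\setminus B$ null), or $\Omega=A_1\cup A_2$ with $A_1,A_2$ disjoint, non-null, and each an atom: indeed, after one admissible split, any further split of $A_1$ or $A_2$ would produce three disjoint non-null sets. In both cases I set $\pi$ to be this partition ($\{\Omega\}$ or $\{A_1,A_2\}$), so that $\sigma(\pi)$ exhausts $\Gcal$ up to null sets.

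With $\pi$ fixed, Proposition \ref{prop:finite NEW} furnishes a simple $\hg\in\Scal(\sigma(\pi))\subseteq\brvG$ with $T(f\ind_B)=T(\hg\ind_B)$ for all $B\in\sigma(\pi)$. The remaining, genuinely load-bearing, step is to upgrade this to all $A\in\Gcal$. Because the $A_i$ are atoms, each $A\cap A_i$ is null-equivalent to $\emptyset$ or to $A_i$, so there is $B\in\sigma(\pi)$ with $A\triangle B\in\Ncal_\Gcal$. I would then prove separately that $T(f\ind_A)=T(f\ind_B)$ and $T(\hg\ind_A)=T(\hg\ind_B)$. Writing $N_1=A\setminus B$ and $N_2=B\setminus A$ (both null), one has $f\ind_A=f\ind_{A\cap B}+f\ind_{N_1}$ and $f\ind_B=f\ind_{A\cap B}+f\ind_{N_2}$, so Lemma \ref{lemma:pasting}(ii) gives $T(f\ind_A)=T(f\ind_{A\cap B})=T(f\ind_B)$; for $\hg$, since $\hg\in\brvG$ I may invoke the defining property of $\Ncal_\Gcal$ directly (with $g_1=\hg\ind_{A\cap B}$ and $g_2=\hg$) to obtain $T(\hg\ind_A)=T(\hg\ind_{A\cap B})=T(\hg\ind_B)$. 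Chaining these with the identity on $\sigma(\pi)$ yields $T(f\ind_A)=T(\hg\ind_A)$ for every $A\in\Gcal$, i.e. $\hg\in\cm{f\middle|\Gcal}$, and essential uniqueness is immediate from Lemma \ref{lemma:uniqueness}, which needs only Assumption \ref{ass:conditional}.

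I expect the main obstacle to be precisely this last passage from $\sigma(\pi)$ to $\Gcal$: the asymmetry that $\hg$ is $\Gcal$-measurable, so the raw definition of $\Ncal_\Gcal$ applies, while $f$ is merely $\Fcal$-measurable, which forces the use of Lemma \ref{lemma:pasting}(ii) and is exactly why ($\Gcal$-PS) and ($\Gcal$-NB) at $f$ are needed, must be handled with care, as must the verification that atomicity makes every $\Gcal$-set null-equivalent to a member of $\sigma(\pi)$.
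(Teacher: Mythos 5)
Your proposal is correct and follows essentially the same route as the paper's proof: dispose of the case $\Omega\in\Ncal_\Gcal$, split into the one-atom and two-atom cases forced by $\Pi(\Gcal)=\emptyset$, apply Proposition \ref{prop:finite NEW} to the resulting finite partition, and extend from $\sigma(\pi)$ to all of $\Gcal$ by discarding null pieces via Lemma \ref{lemma:pasting}(ii) for $f$ and the defining property of $\Ncal_\Gcal$ for $\hg$. The only cosmetic difference is that you transfer each $A\in\Gcal$ to a null-equivalent set in $\sigma(\pi)$, whereas the paper splits $A$ along $A_1,A_2$ and reassembles with ($\Gcal$-PS); both are valid.
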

\begin{proof}
The case $\Omega\in\Ncal_\Gcal$ can be handled in a trivial way taking $\hg=0$ and noticing that $\hg\in\cm{f\middle|\Gcal}$. We then assume $\Omega\in\Gcal\setminus\Ncal_\Gcal$. This in particular implies that for any partition $\pi\subseteq \Gcal$, at least one $A\in\pi$ satisfies $A\in\Gcal\setminus \Ncal_\Gcal$ (otherwise, $\Omega$ would be the finite union of irrelevant events, thus irrelevant itself by Lemma \ref{rem: propertiesirrelevant}).
Notice that $\Pi(\Gcal)=\emptyset$ implies that for any partition $\pi\subseteq \Gcal$ of $\Omega$, at most two of its elements do not belong to $\Ncal_\Gcal$. This leaves us with two alternatives (up to shuffling the elements of the partition): $(i)$ for some partition $\pi=\{A_1,\dots,A_N\}$ of $\Omega$, with $\pi\subseteq \Gcal$, we have $T(\ind_{A_1})>0,T(\ind_{A_2})>0$; $(ii)$ for every partition  $\pi=\{A_1,\dots,A_N\}$ of $\Omega$, with $\pi\subseteq \Gcal$, we have $T(\ind_{A_1})>0$, and $T(\ind_{A_j})=0$ for every $j=2,\dots,N$.

First consider case $(i)$. Take $g\in\cm{f\middle|\sigma(\pi)}$, the latter being nonempty by Proposition \ref{prop:finite NEW}. Take $B\in\Gcal$, and take $\pi^B:=\{A\cap B\mid A\in\pi\}\cup\{A\cap (\Omega\setminus B)\mid A\in\pi\}\subseteq \Gcal$, which is again a partition of $\Omega$. By Lemma \ref{remonmonot} and $\Pi(\Gcal)=\emptyset$ we must have that at exactly two elements of $\pi^B$ do not belong to $\Ncal_\Gcal$, and that  one belongs to $\{A_1\cap B, A_1\cap (\Omega\setminus B)\}$, the other belongs to $\{A_2\cap B,A_2\cap (\Omega\setminus B) \}$. 
Suppose that $A_1\cap B\in\Gcal\setminus \Ncal_\Gcal$. We have that $T(g\ind_{A_1})=T(f\ind_{A_1})$ by construction, but also $T(f\ind_{A_1})=T(f\ind_{A_1\cap B}+f\ind_{A_1\cap (\Omega\setminus B)})=T(f\ind_{A_1\cap B})$ using Lemma \ref{lemma:pasting} item (ii), since $A_1\cap (\Omega\setminus B)\in\Ncal_\Gcal$, and analogously also $T(g\ind_{A_1})=T(g\ind_{A_1\cap B})$, so that $T(g\ind_{A_1\cap B})=T(f\ind_{A_1\cap B})$. If instead we had $A_1\cap B\in\Ncal_\Gcal$, we would still get $T(g\ind_{A_1\cap B})=T(f\ind_{A_1\cap B})$ much more easily. Similarly, we obtain $T(g\ind_{A_2\cap B})=T(f\ind_{A_2\cap B})$, in both cases $A_2\cap B\in\Gcal\setminus\Ncal_\Gcal$ and  $A_2\cap B\in\Ncal_\Gcal$. Now we apply ($\Gcal$-PS) and see that 
\begin{align*}
    T(g\ind_B)&\stackrel{}{=}T(g\ind_{A_1\cap B}+g\ind_{A_2\cap B}+g\ind_{(\Omega\setminus (A_1\cup A_2))\cap B})\stackrel{(\star)}{=}T(g\ind_{A_1\cap B}+g\ind_{A_2\cap B})\\
    &\stackrel{\text{($\Gcal$-PS)}}{=}  T(f\ind_{A_1\cap B}+f\ind_{A_2\cap B})\\
    &\stackrel{(\star\star)}{=}T(f\ind_{A_1\cap B}+f\ind_{A_2\cap B}+f\ind_{(\Omega\setminus (A_1\cup A_2))\cap B})=T(f\ind_B).
\end{align*}
where in $(\star),(\star\star)$ we used the fact that $(\Omega\setminus (A_1\cup A_2))\cap B$ is the finite union of elements of $\Ncal_\Gcal$ (since $\pi^B\notin \Pi(\Gcal)$).

We now  move to case $(ii)$: Take $\pi=\{\Omega\}$ and apply Proposition \ref{prop:finite NEW}, to get $g\in\cm{f\middle|\sigma(\pi)}$. Take now $B\in\Gcal$. We have that either $B\in\Ncal_\Gcal$, in which case $T(f\ind_B)=T(g\ind_B)$, or $B\in\Gcal\setminus \Ncal_\Gcal$. Since we are in case $(ii)$ we conclude in this case that $\Omega\setminus B\in\Ncal_\Gcal$. Thus, by \eqref{def:ncal initial}, $T(g\ind_\Omega)=T(g\ind_{\Omega\cap B})=T(g\ind_B)$ and by Lemma \ref{lemma:pasting} item (ii) $T(f\ind_\Omega)=T(f\ind_B)$. Finally, $T(g\ind_B)=T(f\ind_B)$, and this works for any $B\in\Gcal$, concluding the proof.

\end{proof}
\begin{remark}
\label{rem: equiv relation}
Suppose $\Pi(\Gcal)\neq \emptyset$. By Theorem \ref{thm: reprTwithmeasure} item 1 there exists a probability measure $\PW$ on $\Gcal$ describing exactly the null sets of $T$. Clearly, if $\QW\neq \PW$ is another probability measure on $\Gcal$ such that $\QW(A)=0\Leftrightarrow A\in\Ncal_\Gcal$, we must have $\PW\sim\QW$. Notice that the existence of more than one such measures is is not excluded by Theorem \ref{thm: reprTwithmeasure}.  
Lemma \ref{lemma:uniqueness} shows that $\cm{f\middle|\Gcal}$ (if nonempty) is an equivalence class of $\brvG$ for $\PW-$a.s. equality, for any $\PW$ probability measure on $\Gcal$ satisfying $\PW(A)=0\Leftrightarrow A\in\Ncal_\Gcal$. Equivalently, $\cm{f\middle|\Gcal}$ is an equivalence class in $\brvG$ for the equivalence relation $g_1\sim g_2\Leftrightarrow\{g_1\neq g_2\}\in\Ncal_\Gcal$. 
Observe that whenever $g_1\sim g_2$  we have $T(g_1\ind_A)=T(g_2\ind_A)$ for every $A\in\Gcal$. This follows directly from the definition of $\Ncal_\Gcal$ given in \eqref{def:ncal initial}. In particular then we also have $V_A(g_1)=V_A(g_2)\,\forall A\in\Gcal$ (see Theorem \ref{thm: reprTwithmeasure} item 3). We conclude that for any $A\in\Gcal$ we can actually induce a map (called again $V_A(\cdot)$) on the quotient space $\brvG/\sim$. Notice that the set $\brvG/\sim$ coincides with $L^\infty(\Omega,\Gcal,\QW)$ for every probability measure $\QW$ on $\Gcal$ such that for $A\in\Gcal$ we have $\QW(A)=0\Leftrightarrow A\in\Ncal_\Gcal$, and more precisely for every $g\in\brvG$ the equivalence classes of $g$ under $\sim$ and under $\QW-$a.s. equality are exactly the same.
\end{remark}

In the remainder of the section we write with an abuse of notation $V_A(\cm{f\middle|\Gcal})$, meaning $V_A(g)$ for some $g \in \cm{f\middle|\Gcal}$.

\begin{lemma}
\label{lemma:conditional ok on refinements}
Suppose Assumption \ref{ass:conditional} is satisfied and $\Pi(\Gcal)\neq\emptyset$. Fix $A\in \Gcal$. Take $\pi, \pi'\in\Pi(\Gcal)$ such that $A\in\sigma(\pi)\cap\sigma(\pi')$. Then \[V_A(\cm{f\middle|\sigma(\pi)})=V_A(\cm{f\middle|\sigma(\pi')}).\]
\end{lemma}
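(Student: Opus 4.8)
The plan is to first reduce the statement to the case where one partition refines the other, and then to establish that special case block-by-block, exploiting the signed-measure structure of $V$ together with the pasting lemma.

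First I would check that the symbol $V_A(\cm{f\middle|\sigma(\pi)})$ is unambiguous. If $g_1,g_2\in\cm{f\middle|\sigma(\pi)}$, then by the essential-uniqueness part of Proposition \ref{prop:finite NEW} we have $\{g_1\neq g_2\}\in\Ncal_\Gcal\cap\sigma(\pi)\subseteq\Ncal_\Gcal$, i.e. $g_1\sim g_2$, so $V_A(g_1)=V_A(g_2)$ by Remark \ref{rem: equiv relation}. Next I would pass to the common refinement $\pi''=\{B\cap B'\mid B\in\pi,\,B'\in\pi',\,B\cap B'\neq\emptyset\}$, a partition into $\Gcal$-measurable sets refining both $\pi$ and $\pi'$. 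Since $\pi\in\Pi(\Gcal)$, Lemma \ref{Wakker NEW} gives $\pi''\in\Pi(\Gcal)$, and $A\in\sigma(\pi)\subseteq\sigma(\pi'')$ (likewise from $\pi'$). Hence it suffices to prove the identity when one partition refines the other: applying that case to the pairs $(\pi,\pi'')$ and $(\pi',\pi'')$ yields $V_A(\cm{f\middle|\sigma(\pi)})=V_A(\cm{f\middle|\sigma(\pi'')})=V_A(\cm{f\middle|\sigma(\pi')})$.

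So assume $\pi''$ refines $\pi$ and $A\in\sigma(\pi)$. Using Proposition \ref{prop:finite NEW} I would take the simple representatives $\hg=\sum_{B\in\pi}x_B\ind_B\in\cm{f\middle|\sigma(\pi)}$ with $T(x_B\ind_B)=T(f\ind_B)$, and $\hh=\sum_{C\in\pi''}y_C\ind_C\in\cm{f\middle|\sigma(\pi'')}$ with $T(y_C\ind_C)=T(f\ind_C)$. Because $A$ is a finite disjoint union of the blocks $B\in\pi$ with $B\subseteq A$, and since $B\mapsto V_B(\cdot)$ is a signed measure (Theorem \ref{thm: reprTwithmeasure}, item 1), one has $V_A(\hg)=\sum_{B\subseteq A}V_B(\hg)$ and $V_A(\hh)=\sum_{B\subseteq A}V_B(\hh)$. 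It is therefore enough to prove $V_B(\hg)=V_B(\hh)$ for each single block $B\in\pi$.

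Fixing such a $B$ and writing $B=\bigsqcup_{C\in\pi'',\,C\subseteq B}C$, I would compute the two sides of the $T$-comparison. On one hand $T(\hg\ind_B)=T(x_B\ind_B)=T(f\ind_B)$ by construction of $\hg$. On the other hand, applying the pasting statement of Lemma \ref{lemma:pasting}(i) to the disjoint family $\{C\in\pi''\mid C\subseteq B\}$, for which $T(f\ind_C)=T(y_C\ind_C)$, gives $T(f\ind_B)=T\big(\sum_{C\subseteq B}y_C\ind_C\big)=T(\hh\ind_B)$. Hence $T(\hg\ind_B)=T(\hh\ind_B)$, and the equivalence in Theorem \ref{thm: reprTwithmeasure} item 3 (with $A$ replaced by $B$, used in both directions) forces $V_B(\hg)=V_B(\hh)$. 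This block identity is the crux of the argument; the remaining steps are merely additivity of the signed measure and the identification of the finite-partition means via Proposition \ref{prop:finite NEW}. Summing over the blocks $B\subseteq A$ then yields $V_A(\hg)=V_A(\hh)$, and the reduction in the first paragraph upgrades this to the full statement.
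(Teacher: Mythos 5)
Your proof is correct, but it takes a long detour that the hypothesis makes unnecessary. The crux of your argument is the ``block identity'': for a block $B$ lying in both $\sigma$-algebras, both representatives satisfy $T(\cdot\,\ind_B)=T(f\ind_B)$ by the defining property of the conditional Chisini mean, and item 3 of Theorem \ref{thm: reprTwithmeasure} (used in both directions) then forces $V_B(\hg)=V_B(\hh)$. But that exact argument applies verbatim to $A$ itself: since $A\in\sigma(\pi)\cap\sigma(\pi')$ by hypothesis, Proposition \ref{prop:finite NEW} gives $T(\cm{f\middle|\sigma(\pi)}\ind_A)=T(f\ind_A)=T(\cm{f\middle|\sigma(\pi')}\ind_A)$ directly, and item 3 immediately yields $V_A(\cm{f\middle|\sigma(\pi)})=V_A(\cm{f\middle|\sigma(\pi')})$ --- which is precisely the paper's two-line proof. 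Your passage to the common refinement $\pi''$, the decomposition of $A$ into $\pi$-blocks, the appeal to finite additivity of $B\mapsto V_B(\cdot)$, and the use of Lemma \ref{lemma:pasting}(i) are all sound (and your preliminary check that $V_A(\cm{f\middle|\sigma(\pi)})$ is unambiguous, via Lemma \ref{lemma:uniqueness} and Remark \ref{rem: equiv relation}, is a legitimate point the paper leaves implicit), but none of this machinery is needed: you end up reproving at the level of individual blocks what is already available at the level of $A$. The only thing the longer route would buy is a statement for sets $A$ that are unions of $\pi$-blocks without assuming $A\in\sigma(\pi')$ a priori --- but that is not what the lemma asks for, and in any case $\sigma(\pi)$ consists exactly of such unions.
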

\begin{proof}
First, observe that by definition $$T(\cm{f\middle|\sigma(\pi)}\ind_A)=T(f\ind_A)=T(\cm{f\middle|\sigma(\pi')}\ind_A),$$ see Proposition \ref{prop:finite NEW}. Now, by Theorem \ref{thm: reprTwithmeasure} item 3 we conclude that the desired equality holds.

\end{proof}
By Lemma \ref{lemma:conditional ok on refinements} the following is well posed.
\begin{definition}
\label{def:VA[x]}
Suppose Assumption \ref{ass:conditional} is satisfied and $\Pi(\Gcal)\neq\emptyset$. 
For every $A\in\Gcal$ define $V_A[f]:=V_A(\cm{f\middle|\sigma(\pi)})$ for every $\pi\in\Pi(\Gcal)$ such that $A\in\sigma(\pi)$.
\end{definition}

\begin{proposition}
\label{prop: VA[f] is measure}
Suppose Assumption \ref{ass:conditional} is satisfied and $\Pi(\Gcal)\neq\emptyset$. Then
$A\mapsto V_A[f]$ defines a signed measure on $\Gcal$.
\end{proposition}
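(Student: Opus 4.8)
The plan is to verify directly the two defining properties of a signed measure for the set function $A\mapsto V_A[f]$: that it vanishes on $\emptyset$ and that it is $\sigma$-additive. The first is immediate: for any $\pi\in\Pi(\Gcal)$ we have $\emptyset\in\sigma(\pi)$, and writing $g:=\cm{f\middle|\sigma(\pi)}$, locality (Lemma \ref{lemma first properties VA} item (\ref{item:concidesonindicators})) gives $V_\emptyset[f]=V_\emptyset(g)=V_\emptyset(g\ind_\emptyset)=V_\emptyset(0)=0$. For $\sigma$-additivity I would first establish finite additivity and then continuity from above at $\emptyset$, and combine the two. The subtlety to keep in mind throughout is that the representative of $\cm{f\middle|\sigma(\pi)}$ changes as the partition $\pi$ (hence the admissible set $A$) varies, so one cannot directly invoke Theorem \ref{thm: reprTwithmeasure} item 1, which yields a signed measure only for a \emph{fixed} $g\in\brvG$.

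For finite additivity I would take mutually disjoint $A_1,\dots,A_N\in\Gcal$ with union $E=\bigcup_i A_i$. Fixing any $\pi_0\in\Pi(\Gcal)$ (nonempty by hypothesis) and refining it by the finitely many atoms of $\sigma(A_1,\dots,A_N)$ produces a $\Gcal$-measurable refinement $\pi$ of $\pi_0$; such refinements remain in $\Pi(\Gcal)$ (as exploited in Lemma \ref{Wakker NEW}), and $A_1,\dots,A_N,E\in\sigma(\pi)$. With $g:=\cm{f\middle|\sigma(\pi)}$, Definition \ref{def:VA[x]} yields $V_{A_i}[f]=V_{A_i}(g)$ and $V_E[f]=V_E(g)$. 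Since for this \emph{fixed} $g$ the map $B\mapsto V_B(g)$ is a signed measure on $\Gcal$ by Theorem \ref{thm: reprTwithmeasure} item 1, its finite additivity gives $V_E[f]=V_E(g)=\sum_{i=1}^N V_{A_i}(g)=\sum_{i=1}^N V_{A_i}[f]$.

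Next I would prove continuity from above at $\emptyset$: if $A_n\downarrow_n\emptyset$ in $\Gcal$, then $V_{A_n}[f]\to 0$. For each $n$ pick $\pi_n\in\Pi(\Gcal)$ with $A_n\in\sigma(\pi_n)$ (as above). The crucial point is that, by Proposition \ref{prop:finite NEW}, $\cm{f\middle|\sigma(\pi_n)}$ has a representative $g_n\in\Scal(\Gcal)$ with $\snorm{g_n}\leq\snorm{f}=:k$, \emph{uniformly in $n$}. Hence $-k\ind_\Omega\leq g_n\leq k\ind_\Omega$ pointwise, and the monotonicity of $V_{A_n}(\cdot)$ on simple functions (Lemma \ref{lemma first properties VA} item (\ref{item:VAnondecre})) gives the squeeze
\[
V_{A_n}(-k\ind_\Omega)\leq V_{A_n}[f]=V_{A_n}(g_n)\leq V_{A_n}(k\ind_\Omega).
\]
Both outer terms are evaluated at the \emph{fixed} functions $\pm k\ind_\Omega\in\brvG$, so by Lemma \ref{mufisameasure NEW} (equivalently Theorem \ref{thm: reprTwithmeasure} item 1) the maps $B\mapsto V_B(\pm k\ind_\Omega)$ are signed measures on $\Gcal$ and are therefore continuous from above; as $A_n\downarrow\emptyset$ both bounds tend to $V_\emptyset(\pm k\ind_\Omega)=0$, forcing $V_{A_n}[f]\to 0$.

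Finally I would assemble the pieces. Given disjoint $(E_j)_{j\geq1}\subseteq\Gcal$ with union $E$, set $F_N:=\bigcup_{j>N}E_j$, so that $E=E_1\cup\cdots\cup E_N\cup F_N$ is a finite disjoint decomposition and finite additivity yields $V_E[f]=\sum_{j=1}^N V_{E_j}[f]+V_{F_N}[f]$. Disjointness of the $E_j$ forces $F_N\downarrow_N\emptyset$, whence $V_{F_N}[f]\to 0$ by the continuity step; letting $N\to\infty$ gives $V_E[f]=\sum_{j=1}^\infty V_{E_j}[f]$, and the absolute convergence is inherited from the two dominating finite measures. The main obstacle is exactly the dependence of the representative $\cm{f\middle|\sigma(\pi)}$ on the chosen partition, which blocks a direct appeal to Theorem \ref{thm: reprTwithmeasure} item 1 for the full $\sigma$-additivity; this is precisely what the uniform bound $\snorm{g_n}\leq\snorm{f}$ overcomes, by sandwiching $V_{A_n}[f]$ between two genuine signed measures whose continuity at $\emptyset$ is already available.
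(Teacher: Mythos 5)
Your proof is correct, and while the skeleton (finite additivity plus vanishing of the tail $V_{F_N}[f]$) matches the paper's, the key step — showing $V_{A_n}[f]\to 0$ for $A_n\downarrow\emptyset$ — is handled by a genuinely different mechanism. The paper argues by contradiction: it assumes $\inf_N\abs{V_\Omega(\cm{f\middle|\sigma(\widehat\pi_N)}\ind_{F_N})}>\varepsilon$, uses pointwise continuity of $V_\Omega(\cdot)$ to manufacture a constant $x$ with $0<V_\Omega(x)<\varepsilon$, transfers the inequality back to $T$ via the order-preserving property (Theorem \ref{thm: reprTwithmeasure} item 3), and then contradicts Remark \ref{rem: order cont at zero}, i.e.\ the fact that $T(f\ind_{A_n})\to T(0)$, which itself rests on ($\Gcal$-NB) and ($\Gcal$-PC). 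You instead stay entirely at the level of $V$: the uniform bound $\snorm{g_n}\leq\snorm{f}$ on the Chisini representatives from Proposition \ref{prop:finite NEW}, together with monotonicity of $V_{A}(\cdot)$ on simple functions (Lemma \ref{lemma first properties VA} item (\ref{item:VAnondecre})), sandwiches $V_{A_n}[f]$ between $V_{A_n}(\pm\snorm{f}\ind_\Omega)$, which are genuine finite signed measures in $A$ by Lemma \ref{mufisameasure NEW} and hence continuous from above at $\emptyset$. This buys a direct (non-contradiction) argument that needs neither the intermediate-value step for $V_\Omega$ nor the detour through $T$; the paper's route, by contrast, only uses the bound on $f$ itself via ($\Gcal$-NB) rather than the bound on the representatives. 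Your additional remarks — that $V_\emptyset[f]=0$ by locality, and that absolute convergence of $\sum_j V_{E_j}[f]$ follows from domination by the nonnegative measures $V_\cdot(\snorm{f}\ind_\Omega)$ and $-V_\cdot(-\snorm{f}\ind_\Omega)$ — are correct and in fact make explicit points the paper's proof of this proposition leaves implicit (it only records the domination argument inside Lemma \ref{mufisameasure NEW}).
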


\begin{proof}
First we show finite additivity.  Let $\{E_1,\dots,E_N\}\subseteq\Gcal$ be a given partition of $E\in\Gcal$. Take $E_{0}:=\Omega\setminus E$ and $\pi=\{E_0,\dots,E_{N}\}$. Up to further refinements suppose $\pi\in\Pi(\Gcal)$ and $E_h\in\sigma(\pi)$ for every $h$ (this is possible by Lemma \ref{Wakker NEW}). Then $E\in\sigma(\pi)$ and $$V_E[f]=V_E(\cm{f\middle|\sigma(\pi)})\stackrel{\text{Thm.}\ref{thm: reprTwithmeasure}}{=}\sum_{j=1}^NV_{E_j}(\cm{f\middle|\sigma(\pi)})\,.$$
Observe now that $E_j\in\sigma(\pi)$ for every $j=1,\dots,N$, so that $V_{E_j}(\cm{f\middle|\sigma(\pi)})=V_{E_j}[f]$ by definition for every $j=1,\dots,N$.
We now show $\sigma$-additivity. Consider a sequence $(E_n)_n$ of disjoint sets in $\Gcal$, and set $\pi_N:=\{E_1,\dots,E_N,\bigcup_{k\geq N}E_k\}$ and $\pi\in\Pi(\Gcal)$ (which is nonempty by assumption). Denote by $\widehat{\pi}_N$ a common refinement between $\pi$ and $\pi_N$. Then $\widehat{\pi}_N\in \Pi(\Gcal)$ (Lemma \ref{Wakker NEW}) and $\bigcup_{k\geq N} E_k\in \sigma(\pi_N)\subset\sigma(\widehat{\pi}_N)$). Now, using the definition of $V_\cdot[f]$ and \eqref{consistency NEW}:
$$V_{\bigcup_{k\geq N}E_k}[f]=V_{\bigcup_{k\geq N}E_k}\left(\cm{f\middle|\sigma(\widehat{\pi}_N)}\right)=V_\Omega\left(\cm{f\middle|\sigma(\widehat{\pi}_N)}\ind_{\bigcup_{k\geq N}E_k}\right).$$
If we prove that right hand side tends to zero as $N$ increases, we are done.
Suppose by contradiction that for some subsequence, relabelled with the same index $N$, we had
$$\inf_N\abs{V_\Omega(\cm{f\middle|\sigma(\widehat{\pi}_N)}\ind_{\bigcup_{k\geq N}E_k})}>\varepsilon>0 \text{ for some } \varepsilon>0.$$ This can only happen in two cases, the first being: $$\inf_NV_\Omega(\cm{f\middle|\sigma(\widehat{\pi}_N)}\ind_{\bigcup_{k\geq N}E_k})>\varepsilon>0.$$ 
By pointwise continuity of $V_A(\cdot)$ (see Theorem \ref{thm: reprTwithmeasure} item 4) we have that there exists $x\in\R$ with $0=V_A(0)<V_\Omega(x)<\varepsilon$. Hence by Theorem  \ref{thm: reprTwithmeasure} item 3 we have $\inf_N T(\cm{f\middle|\sigma(\widehat{\pi}_N)}\ind_{\bigcup_{k\geq N}E_k})\geq T(x)>T(0)$. Also, we have by definition of $\cm{f\middle|\sigma(\widehat{\pi}_N)}$ that $T(\cm{f\middle|\sigma(\widehat{\pi}_N)}\ind_{\bigcup_{k\geq N}E_k})=T(f\ind_{\bigcup_{k\geq N}E_k})$, but then $\inf_N T(f\ind_{\bigcup_{k\geq N}E_k})\geq T(x)>T(0)$ would contradict Remark \ref{rem: order cont at zero}. The second case is $\sup_N V_\Omega(\cm{f\middle|\sigma(\widehat{\pi}_N)}\ind_{\bigcup_{k\geq N}E_k})<-\varepsilon<0$, which  can be handled similarly yielding again a contradiction.

\end{proof}

\begin{proposition}
\label{V[A]orderpres}
Suppose Assumption \ref{ass:conditional} is satisfied and $\Pi(\Gcal)\neq\emptyset$. Fix $g\in\brvG$ and $A\in\Gcal$. Then 

$$T(f\ind_A)\leq T(g\ind_A)\quad\Leftrightarrow \quad V_A[f]\leq V_A(g).$$

\end{proposition}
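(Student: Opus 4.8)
The plan is to recognise this as an essentially immediate consequence of item 3 of Theorem~\ref{thm: reprTwithmeasure}, once Definition~\ref{def:VA[x]} is unwound. The key point is that $V_A[f]$ is by definition the value $V_A(\hg)$ of the functional $V_A$ at any representative $\hg$ of the finitely generated conditional Chisini mean $\cm{f\middle|\sigma(\pi)}$, and that any such $\hg$ reproduces $T$ on the set $A$, namely $T(\hg\ind_A)=T(f\ind_A)$. Hence comparing $T(f\ind_A)$ with $T(g\ind_A)$ is the same as comparing $T(\hg\ind_A)$ with $T(g\ind_A)$, and the latter comparison is exactly the one governed by $V_A$ via item 3 of Theorem~\ref{thm: reprTwithmeasure}.

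Concretely, first I would fix a suitable partition. Since $\Pi(\Gcal)\neq\emptyset$, I pick $\pi_0\in\Pi(\Gcal)$ and refine it using $A$, setting $\pi=\{B\cap A\mid B\in\pi_0\}\cup\{B\cap(\Omega\setminus A)\mid B\in\pi_0\}\subseteq\Gcal$. By Lemma~\ref{Wakker NEW} this refinement still belongs to $\Pi(\Gcal)$, and moreover $A=\bigcup_{B\in\pi_0}(B\cap A)$ is a union of atoms of $\pi$, so $A\in\sigma(\pi)$. By Proposition~\ref{prop:finite NEW} the set $\cm{f\middle|\sigma(\pi)}$ is nonempty; I choose any $\hg\in\cm{f\middle|\sigma(\pi)}\subseteq\brvG$. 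By the defining property of the conditional Chisini mean, $T(\hg\ind_B)=T(f\ind_B)$ for every $B\in\sigma(\pi)$, so in particular, since $A\in\sigma(\pi)$,
\[T(\hg\ind_A)=T(f\ind_A).\]
On the other hand, Definition~\ref{def:VA[x]} gives $V_A[f]=V_A(\cm{f\middle|\sigma(\pi)})=V_A(\hg)$, this value being independent of the choice of $\pi$ by Lemma~\ref{lemma:conditional ok on refinements} and independent of the representative $\hg$ within $\cm{f\middle|\sigma(\pi)}$ by Remark~\ref{rem: equiv relation}.

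It then remains to apply item 3 of Theorem~\ref{thm: reprTwithmeasure} to the pair $\hg,g\in\brvG$, which yields
\[T(\hg\ind_A)\leq T(g\ind_A)\quad\Longleftrightarrow\quad V_A(\hg)\leq V_A(g).\]
Substituting $T(\hg\ind_A)=T(f\ind_A)$ on the left and $V_A(\hg)=V_A[f]$ on the right gives precisely
\[T(f\ind_A)\leq T(g\ind_A)\quad\Longleftrightarrow\quad V_A[f]\leq V_A(g),\]
which is the claim, both implications being captured at once by this equivalence chain. I do not expect any genuine obstacle here: the statement is bookkeeping on top of Theorem~\ref{thm: reprTwithmeasure}. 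The only points needing a little care are producing a partition $\pi\in\Pi(\Gcal)$ that resolves $A$ (handled by the refinement above together with Lemma~\ref{Wakker NEW}) and confirming that the chosen representative $\hg$ satisfies $T(\hg\ind_A)=T(f\ind_A)$ exactly, which is immediate from the definition of $\cm{f\middle|\sigma(\pi)}$ once we know $A\in\sigma(\pi)$.
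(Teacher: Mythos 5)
Your proof is correct and follows essentially the same route as the paper's: pick $\pi\in\Pi(\Gcal)$ refining $A$ via Lemma~\ref{Wakker NEW}, use $T(\hg\ind_A)=T(f\ind_A)$ for $\hg\in\cm{f\middle|\sigma(\pi)}$ from Proposition~\ref{prop:finite NEW}, and transfer the comparison through the order-preservation of $V_A$ in Theorem~\ref{thm: reprTwithmeasure}. The only cosmetic difference is that you correctly invoke item 3 of that theorem (order preservation), whereas the paper's proof cites item 2 at that step, which appears to be a mis-citation.
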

\begin{proof}
By Lemma \ref{Wakker NEW}, we can take $\pi\in\Pi(\Gcal)$ with $A\in\sigma(\pi)$.
We see that $T(f\ind_A)\leq T(g\ind_A)
   \Leftrightarrow T(\cm{f\middle|\sigma(\pi)}\ind_A)\leq T(g\ind_A)$, which follows from the definition of $\cm{f\middle|\sigma(\pi)}$ in Proposition \ref{prop:finite NEW}, and $ T(\cm{f\middle|\sigma(\pi)}\ind_A)\leq T(g\ind_A)
   \Leftrightarrow V_A(\cm{f\middle|\sigma(\pi)}\ind_A)\leq V_A(g\ind_A)$ is a consequence of item 2 in Theorem \ref{thm: reprTwithmeasure}. Additionally,  $ V_A(\cm{f\middle|\sigma(\pi)}\ind_A)\leq V_A(g\ind_A)
   \Leftrightarrow V_A[f]\leq V_A(g)$ follows from Definition \ref{def:VA[x]}.

\end{proof}

\begin{proposition}
\label{prop: increase locally}
Suppose Assumption \ref{ass:conditional} is satisfied and $\Pi(\Gcal)\neq\emptyset$. Let $g\in\brvG$ be given. Suppose that for some $B\in\Gcal$ we have $T(f\ind_B)>T(g\ind_B)$. Then there exist $\varepsilon>0$ and $\Omega_0\in\Gcal\setminus\Ncal_\Gcal$ such that $T(f\ind_A)\geq T\left((g+\varepsilon\ind_{\Omega_0}\right)\ind_A)$ for every $A\in\Gcal,A\subseteq \Omega_0$. 
\end{proposition}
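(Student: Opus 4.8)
The plan is to transport the whole problem to the level of the signed measures constructed in Section~\ref{proofthm reprTwithmeasure} and to solve it there by a Hahn--Jordan argument, then translate back via the order-preserving property. First I would reformulate the hypothesis: by Proposition~\ref{V[A]orderpres}, the assumption $T(f\ind_B)>T(g\ind_B)$ is equivalent to $V_B[f]>V_B(g)$. Recall from Proposition~\ref{prop: VA[f] is measure} that $A\mapsto V_A[f]$ is a signed measure on $\Gcal$, and from Lemma~\ref{mufisameasure NEW} (taking $A=\Omega$) that for any fixed $h\in\brvG$ the map $A\mapsto V_A(h)$ is a signed measure on $\Gcal$. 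For $\varepsilon>0$ set $h_\varepsilon:=g+\varepsilon\ind_\Omega\in\brvG$ and define the finite signed measure
\[
\rho_\varepsilon(A):=V_A[f]-V_A(h_\varepsilon),\qquad A\in\Gcal .
\]

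The key idea is to fix $\varepsilon$ \emph{before} performing the decomposition. Since $h_{1/n}\to g$ pointwise and in norm, the pointwise continuity of $V_B(\cdot)$ (Theorem~\ref{thm: reprTwithmeasure} item 4, obtained in Corollary~\ref{extension NEW}) gives $V_B(h_{1/n})\to V_B(g)$, hence $\rho_{1/n}(B)\to V_B[f]-V_B(g)>0$. I would therefore pick $\varepsilon>0$ small enough that $\rho_\varepsilon(B)>0$. Applying the Hahn--Jordan decomposition to the finite signed measure $\rho_\varepsilon$ produces a positive set $P\in\Gcal$; put $\Omega_0:=B\cap P$. Then $\rho_\varepsilon(\Omega_0)\ge\rho_\varepsilon(B)>0$ and $\rho_\varepsilon(A)\ge 0$ for every $A\in\Gcal$ with $A\subseteq\Omega_0$, i.e.
\[
V_A[f]\ge V_A(h_\varepsilon)\qquad\text{for all }A\in\Gcal,\ A\subseteq\Omega_0 .
\]
To see $\Omega_0\notin\Ncal_\Gcal$, note that for $N\in\Ncal_\Gcal$ one has $T(h\ind_N)=T(0)=0$ for every $h$, so by Theorem~\ref{thm: reprTwithmeasure} item 3 both $V_N[f]=0$ and $V_N(h_\varepsilon)=0$, whence $\rho_\varepsilon(N)=0$; since $\rho_\varepsilon(\Omega_0)>0$, the set $\Omega_0$ cannot be null.

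Finally I would translate the measure inequality back to $T$. For $A\subseteq\Omega_0$ we have $h_\varepsilon\ind_A=(g+\varepsilon\ind_{\Omega_0})\ind_A$, so by locality (Theorem~\ref{thm: reprTwithmeasure} item 2) $V_A(h_\varepsilon)=V_A\big((g+\varepsilon\ind_{\Omega_0})\ind_A\big)=V_A(g+\varepsilon\ind_{\Omega_0})$, and thus $V_A[f]\ge V_A(g+\varepsilon\ind_{\Omega_0})$. Choosing, for each such $A$, a representative $\hg\in\cm{f\middle|\sigma(\pi)}$ with $A\in\sigma(\pi)$ (so that $V_A[f]=V_A(\hg)$ and $T(f\ind_A)=T(\hg\ind_A)$) and applying the order equivalence of Theorem~\ref{thm: reprTwithmeasure} item 3 to $\hg$ and $g+\varepsilon\ind_{\Omega_0}$, both in $\brvG$, yields $T(f\ind_A)\ge T\big((g+\varepsilon\ind_{\Omega_0})\ind_A\big)$, as required. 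The main obstacle is precisely the ordering of these moves: a direct Hahn decomposition of the gap $V_\cdot[f]-V_\cdot(g)$ only delivers $\ge 0$, which cannot absorb the strictly positive increment $V_A(g+\varepsilon)-V_A(g)$ on subsets where that gap vanishes; incorporating $\varepsilon$ first and using strictness of $\rho_\varepsilon(B)>0$ together with pointwise continuity is what makes the positive set genuinely non-null and the $\varepsilon$-improvement uniform on it.
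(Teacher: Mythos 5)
Your proof is correct and follows essentially the same route as the paper's: both arguments fix $\varepsilon>0$ small enough (via pointwise continuity) that the signed measure $A\mapsto V_A[f]-V_A(g+\varepsilon)$ remains strictly positive at $B$, apply the Hahn decomposition to extract a positive set, verify it is not in $\Ncal_\Gcal$ because null sets carry zero measure, and translate back through the order-preserving property of $V_A$. The only cosmetic differences are that the paper takes $\Omega_0$ to be the Hahn positive set itself rather than its intersection with $B$, and selects $\varepsilon$ using ($\Gcal$-PC) of $T$ rather than of $V_B(\cdot)$; neither changes the substance.
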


\begin{proof}
We start observing that by ($\Gcal$-PC) we have  $T(f\ind_B)> T((g+\varepsilon)\ind_B)$ for some $\varepsilon>0$, which implies $ V_A[f]-V_A\left(g+\varepsilon\right)$ . Define now the signed measure $\mu$ as $A\mapsto \mu(A):= V_A[f]-V_A\left(g+\varepsilon\right)$. Observe that $\mu(B)>0$ by Proposition \ref{V[A]orderpres}. Take the Hahn Decomposition of $\mu$ (see \cite[Theorem 6.14]{Rudin87}), with  $\Omega_0\in\Gcal$ satisfying $\mu(A)\geq 0$ for every $A\in\Gcal,A\subseteq\Omega_0$ and $\mu(A)\leq 0$ for every $A\in\Gcal,A\subseteq\Omega\setminus \Omega_0$. Observe that $\mu(\Omega_0)>0$, otherwise  we would have $\mu(B)=\mu(B\cap(\Omega\setminus\Omega_0)) \leq 0$ which is a contradiction with the previously established fact that $\mu(B)>0$. This implies $\Omega_0\in\Gcal\setminus \Ncal_\Gcal$: indeed, if this were not the case, $T(f\ind_{\Omega_0})=0=T((g+\varepsilon)\ind_{\Omega_0})$ by Lemma \ref{lemma:pasting} item (ii) and definition of $\Ncal_\Gcal$, which in turns gives by Proposition \ref{V[A]orderpres} $\mu(\Omega_0)=V_{\Omega_0}[f]-V_{\Omega_0}(g+\varepsilon)\leq 0$, a contradiction.

\end{proof}

\begin{corollary}
\label{cor:aumentare una g}
Suppose Assumption \ref{ass:conditional} and Assumption \ref{ass:conditional:local} are satisfied and $\Pi(\Gcal)\neq\emptyset$.  Suppose that  $g\in\brvG$ satisfies $T(f\ind_A)\geq T(g\ind_A)$ for every $A\in\Gcal$ and the inequality is strict for some $ B\in\Gcal$. Then there exists a set $\Omega_0\in\Gcal\setminus \Ncal_\Gcal$ and $\varepsilon>0$ such that $T(f\ind_A)\geq T\left((g+\varepsilon\ind_{\Omega_0})\ind_A\right)$ for all $A\in\Gcal$.
\end{corollary}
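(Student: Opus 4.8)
The plan is to feed the strict inequality at $B$ into Proposition \ref{prop: increase locally} and then upgrade its \emph{local} conclusion (valid only for sets contained in $\Omega_0$) to a \emph{global} one, using the standing hypothesis $T(f\ind_A)\geq T(g\ind_A)$ for all $A\in\Gcal$ together with the pasting machinery of Lemma \ref{lemma:pasting}.

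First, since by assumption $T(f\ind_B)>T(g\ind_B)$ for some $B\in\Gcal$, the hypotheses of Proposition \ref{prop: increase locally} are met, and it yields $\varepsilon>0$ and $\Omega_0\in\Gcal\setminus\Ncal_\Gcal$ such that, writing $h:=g+\varepsilon\ind_{\Omega_0}$,
\[T(f\ind_A)\geq T(h\ind_A)\qquad\text{for every }A\in\Gcal\text{ with }A\subseteq\Omega_0.\]
This $\Omega_0$ and this $\varepsilon$ will be the ones claimed in the statement; it remains only to remove the restriction $A\subseteq\Omega_0$.

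Now I would fix an arbitrary $A\in\Gcal$ and decompose it into the disjoint pieces $A\cap\Omega_0$ and $A\setminus\Omega_0$. On the first piece, $A\cap\Omega_0\subseteq\Omega_0$, so the displayed inequality applies directly and gives $T(f\ind_{A\cap\Omega_0})\geq T(h\ind_{A\cap\Omega_0})$. On the second piece $\ind_{\Omega_0}$ vanishes, so $h\ind_{A\setminus\Omega_0}=g\ind_{A\setminus\Omega_0}$, and the standing hypothesis $T(f\ind_{A\setminus\Omega_0})\geq T(g\ind_{A\setminus\Omega_0})$ reads exactly $T(f\ind_{A\setminus\Omega_0})\geq T(h\ind_{A\setminus\Omega_0})$.

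Finally I would paste these two inequalities on the disjoint sets $A\cap\Omega_0$ and $A\setminus\Omega_0$ by the inequality version of Lemma \ref{lemma:pasting} item (i), obtaining $T(f\ind_A)\geq T(h\ind_A)$ for every $A\in\Gcal$, which is the assertion. The only genuine ingredient beyond Proposition \ref{prop: increase locally} is this pasting step, so I expect it to be where the care is needed; note that it is precisely here that Assumption \ref{ass:conditional:local} (via ($\Gcal$-PS) and ($\Gcal$-NB) at $f$) enters, which explains why the corollary assumes it whereas Proposition \ref{prop: increase locally} does not.
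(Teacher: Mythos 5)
Your proposal is correct and follows essentially the same route as the paper's own proof: invoke Proposition \ref{prop: increase locally} to get $\varepsilon$ and $\Omega_0$, split an arbitrary $A\in\Gcal$ into $A\cap\Omega_0$ and $A\setminus\Omega_0$, observe that $(g+\varepsilon\ind_{\Omega_0})\ind_{A\setminus\Omega_0}=g\ind_{A\setminus\Omega_0}$ so the standing hypothesis covers the second piece, and glue via the inequality version of Lemma \ref{lemma:pasting} item (i). Your closing remark about where Assumption \ref{ass:conditional:local} enters is also accurate.
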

\begin{proof}
Take $\varepsilon>0$, $\Omega_0\in\Gcal\setminus \Ncal_\Gcal$ as in Proposition \ref{prop: increase locally}. Take $A\in\Gcal$. We then have
\begin{align*}
(\star):&\quad T(f\ind_{A\cap (\Omega\setminus \Omega_0)})\geq T(g\ind_{A\cap (\Omega\setminus \Omega_0)})=T((g+\varepsilon \ind_{\Omega_0})\ind_{A\cap (\Omega\setminus \Omega_0)})\text{ by assumption }\\
(\star\star):&\quad T(f\ind_{A\cap \Omega_0})\geq T((g+\varepsilon \ind_{\Omega_0})\ind_{A\cap \Omega_0})\text{ by Proposition }\ref{prop: increase locally}.    
\end{align*}

Now applying Lemma \ref{lemma:pasting}  we have
\begin{align*}
T(f\ind_A)&=T(f\ind_{A\cap\Omega_0}+f\ind_{A\cap (\Omega\setminus \Omega_0)})
\\ &\stackrel{}{\geq } T((g+\varepsilon \ind_{\Omega_0})\ind_{A\cap\Omega_0}+  (g+\varepsilon \ind_{\Omega_0})\ind_{A\cap (\Omega\setminus \Omega_0)})=T\left((g+\varepsilon\ind_{\Omega_0})\ind_A\right)
\end{align*}

\end{proof}

We conclude this section with  proof of the main result of the paper, namely Theorem \ref{thm:existencecond NEW}.

\begin{proof}[Proof of Theorem \ref{thm:existencecond NEW}]Since the case $\Pi(\Gcal)=\emptyset$ is already covered by Lemma \ref{lemma: thmfor pig empty}, we consider now the case $\Pi(\Gcal)\neq\emptyset$. Observe that Assumption \ref{ass:conditional} and Assumption \ref{ass:conditional:local} are satisfied, so that we can exploit all the tools we have previously developed.
Recall that $A\mapsto V_A(\ind_{\Omega})$ is a probability measure on $\Gcal$, call it $\PW$ (see Theorem \ref{thm: reprTwithmeasure} item 1). Observe that for $g_1,g_2\in\brvG$, $g_1=g_2\,\PW-$a.s. implies $T(g_1)=T(g_2)$ (see Remark \ref{rem: equiv relation}). We introduce the set $$\Tcal_\Gcal:=\{g\in\brvG\mid T(f\ind_A)\geq T(g\ind_A)\,\forall\,A\in\Gcal\}.$$
Observe that $\Tcal_\Gcal\neq \emptyset$ since $-\snorm{f}\in\Tcal_\Gcal$. Furthermore, whenever $g_1,g_2\in \Tcal_\Gcal$ and $A\in\Gcal$, also $g_1\ind_A+g_2\ind_{\Omega\setminus A}\in\Tcal_\Gcal$: indeed $\Gcal-$ measurability is easily seen, and for any $B\in\Gcal$ we have by Lemma \ref{lemma:pasting}
\begin{align*}
&T((g_1\ind_A+g_2\ind_{\Omega\setminus A})\ind_B)=T(g_1\ind_{A\cap B}+g_2\ind_{(\Omega\setminus A)\cap B}) \\&\leq  T(f\ind_{A\cap B}+f\ind_{(\Omega\setminus A)\cap B})=T(f\ind_B) 
\end{align*}
Hence, the set $\Tcal_\Gcal$ is upward directed meaning that whenever $g_1,g_2\in \Tcal_\Scal$ we get that the pointwise maximum  $g_1\vee g_2$ belongs to $\Tcal_\Gcal$ (selecting $A=\{g_1\geq g_2\}$). Take now $\essup_\PW \{[g]_{\PW}\mid g\in \Tcal_{\Gcal}\}$, and $\hg$ a $\Gcal$-measurable representative for the equivalence class of the essential supremum. There exists a maximizing sequence $(g_n)_n\subseteq \Tcal_\Gcal$ since $\Tcal_\Gcal$ is upward directed, for which have $g_n\uparrow_n \hg$ $\PW-$a.s. (See \cite[Section A.5]{FS11} ). We can actually assume that $g_n(\omega)\uparrow_n \hg(\omega)\,\forall\omega\in\Omega$: by Remark \ref{rem: equiv relation} we can modify each $g_n$ on a set of probability zero, setting it to be equal to $\hg$ itself there, without affecting the values of $T(g_n\ind_A),A\in\Gcal$. By ($\Gcal$-PC) we have $T(\hg\ind_A)\leq T(f\ind_A)$ for all $A\in\Gcal$, since such an inequality is satisfied by $(g_n)$ for each $n$. It is not hard to see that $-\snorm{f}\leq \hg\leq \snorm{f},\PW-$a.s.: first observe that $-\snorm{f}\in\Tcal_\Gcal$ as previously observed, hence by definition $\hg\geq -\snorm{f},\PW-$a.s.. If moreover we had $\PW(\{\hg(\omega)>\snorm{f}\})>0$, we would have that for $n$ big enough  $V_{\Omega}(\ind_{E_n}))=\PW(E_n)>0$ for and $E_n:=\{g_n>\snorm{f}+\frac{1}{n}\}$. This in turns would give $E_n\notin\Ncal_\Gcal$ (Theorem \ref{thm: reprTwithmeasure} item 1), and 
by ($\Gcal$-Mo) we would conclude $T(g_n\ind_{E_n})\geq T((\snorm{f}+\frac{1}{n})\ind_{E_n})>T(\snorm{f}\ind_{E_n})\geq T(f\ind_{E_n})$, contradicting $g_n\in\Tcal_\Gcal$. Hence we can assume that $\abs{\hg(\omega)}\leq \snorm{f}, \forall \omega\in\Omega$ again modifying each $g_n$ on a set of probability zero, without affecting the values of $T(g_n\ind_A),A\in\Gcal$,  by Remark \ref{rem: equiv relation}. We conclude that $\hg\in\Tcal_\Gcal$. If we had that for some $B\in\Gcal$, $T(f\ind_B)>T(\hg\ind_B)$ we would have by Corollary \ref{cor:aumentare una g} that $T(f\ind_A)\geq T\left((\hg+\varepsilon\ind_{\Omega_0})\ind_A)\right),\forall A\in\Gcal$ for some $\varepsilon>0, \Omega_0\in\Gcal$ with $\Omega_0\in\Gcal\setminus\Ncal_\Gcal$, which implies $\hg+\varepsilon\ind_{\Omega_0}\in\Tcal_\Gcal$. Since $\Omega_0\in\Gcal\setminus\Ncal_\Gcal$ we have $\PW(\Omega_0)>0$, so that $\PW(\{\hg+\varepsilon\ind_{\Omega_0}>\hg\})>0$. This contradicts the maximality of $\hg$ as a representative of the essential supremum. Finally, the claimed essential uniqueness follows from Lemma \ref{lemma:uniqueness}.

\end{proof}

\appendix

\renewcommand{\thesection}{\Alph{section}}
\section{Appendix}
\subsection{On Debreu's Representation}\label{debreu}
\begin{definition}
\label{defop}
Let  $\Xcal$ be a set, and let $\Psi:\Xcal\rightarrow\R$ be given.
A map $\Phi:\Xcal\rightarrow\R$ is $\Psi$-order preserving on $\Xcal$ if for any two functions $f,g\in\Xcal$ we have: $\Phi(f)\leq \Phi(g)\Leftrightarrow \Psi(f)\leq \Psi(g)$.
\end{definition}

\begin{lemma}
\label{lemmaop}
For a functional $\Phi:\Xcal\rightarrow\R$ the following are equivalent:
\begin{enumerate}[(i)]
    \item \label{opleq} $\Phi$ is $\Psi$-order preserving on on $\Xcal$;
    \item \label{opstrict} for any two functions $f,g\in\Xcal$ we have: $\Phi(f)< \Phi(g)\Leftrightarrow \Psi(f)< \Psi(g)$.
\end{enumerate} 
\end{lemma}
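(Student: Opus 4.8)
The plan is to exploit the single structural fact that makes this equivalence work, namely the \emph{totality} of the order on the codomain $\R$: for any two reals $a,b$ one has $a<b$ if and only if $\lnot(b\leq a)$, and dually $a\leq b$ if and only if $\lnot(b<a)$. Since both $\Phi$ and $\Psi$ take values in $\R$, each of the two conditions (\ref{opleq}) and (\ref{opstrict}) can be rewritten purely in terms of the other by negating and swapping the two arguments. There is no analytic content whatsoever; the argument is a short exercise in propositional logic applied to a linear order.

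Concretely, for the implication (\ref{opleq})$\Rightarrow$(\ref{opstrict}) I would fix arbitrary $f,g\in\Xcal$ and argue that $\Phi(f)<\Phi(g)$ is equivalent to $\lnot(\Phi(g)\leq\Phi(f))$ by totality of the order on $\R$; then apply the order-preserving hypothesis (\ref{opleq}) \emph{with the roles of $f$ and $g$ interchanged} to get $\Phi(g)\leq\Phi(f)\Leftrightarrow\Psi(g)\leq\Psi(f)$; negating both sides and using totality once more converts $\lnot(\Psi(g)\leq\Psi(f))$ back into $\Psi(f)<\Psi(g)$, which chains the equivalences into $\Phi(f)<\Phi(g)\Leftrightarrow\Psi(f)<\Psi(g)$. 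For the converse (\ref{opstrict})$\Rightarrow$(\ref{opleq}) I would run the symmetric computation: write $\Phi(f)\leq\Phi(g)\Leftrightarrow\lnot(\Phi(g)<\Phi(f))$, invoke (\ref{opstrict}) with swapped arguments to replace the strict $\Phi$-inequality by the strict $\Psi$-inequality, and negate again to recover $\Psi(f)\leq\Psi(g)$.

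The only point requiring any care — and the closest thing to an obstacle — is bookkeeping the interchange of arguments and the alternation between $\leq$ and $<$ under negation; it is precisely the totality of $\R$ (equivalently, the trichotomy law) that licenses the passages $\lnot(b\leq a)\Leftrightarrow a<b$ and $\lnot(b<a)\Leftrightarrow a\leq b$, and this is the hypothesis one must not forget to use. I would therefore state the two rewriting rules explicitly at the outset so that both implications reduce to a transparent three-step chain of equivalences, and no case distinction is needed.
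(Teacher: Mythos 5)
Your proposal is correct: the duality $a<b\Leftrightarrow\lnot(b\leq a)$ on the totally ordered codomain $\R$, applied with the arguments swapped, immediately converts each of the two equivalences into the other. This is essentially the same argument as the paper's, which merely packages the same use of totality/trichotomy as a pair of short proofs by contradiction rather than as your direct chain of negated equivalences.
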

\begin{proof}$\,$

\eqref{opleq}$\Rightarrow$\eqref{opstrict}: For a $\Psi$-order preserving $\Phi$, $\Phi(f)< \Phi(g)$ but $\Psi(f)=\Psi(g)$ implies $\Psi(f)\geq \Psi(g)$ and $\Phi(f)\geq \Phi(g)$, a contradiction, and $\Psi(f)<\Psi(g)$ but $\Phi(f)= \Phi(g)$ implies $\Phi(f)\geq\Phi(g)$, which in turns yields $\Psi(f)\geq \Psi(g)$, a contradiction.

\eqref{opstrict}$\Rightarrow$\eqref{opleq}: if $\Psi(f)=\Psi(g)$ and $\Phi(f)\gtrless\Phi(g)$ then by \eqref{opstrict} $\Psi(f)\gtrless\Psi(g)$, a contradiction, and if $\Phi(f)=\Phi(g)$ but $\Psi(f)\gtrless \Psi(g)$ then $\Phi(f)\gtrless\Phi(g)$, a contradiction.

\end{proof}

\medskip

Let $N = \{ 1, ... , n \}$ be a set of indexes and fix a subset $I \subset N$ given by $I=\{i_1< i_2< \ldots < i_k\}$. Then the complementary indexes are given by $I^c=\{j_1<\ldots<j_{n-k}\}$ and $I\cup I^C= N$. Given two vectors $y\in\R^k$ and $z\in \R^{n-k}$ we define the aggregated vector $yIz\in\R^n$ given by the vector formed by $y$ and $z$ conserving the order of the indexes $I$. More precisely if we call $w=yIz$ then $w_{i_h}=y_h$ for any $h=1,\ldots,k$ and $w_{j_h}=z_h$ for every $h=1,\ldots,n-k$.    
\begin{definition}
 A given preorder $\succeq$ on $\R^n$ induces on $\R^{k}$ a preorder $\succeq_I$, within the arrangement $I = \{i_1, ... , i_k\}$ for $k<n$, by
\begin{equation*}
(x_{1}, ... , x_k) \succeq_{I} (y_{1}, ... , y_k) \Leftrightarrow 
\exists z\in\R^k  :
xIz \succeq yIz.
\end{equation*}
\end{definition}

\begin{definition}\label{Sure:Thing}
The preorder $\succeq$ on $\R^n$ satisfies the Sure Thing Principle if for any $I \subset N$, $\succeq_{I}$ is independent of the choice of $z$. Namely if there exists $z\in\R^k$ such that $xIz \succeq yIz$ then $xIz' \succeq yIz'$ for any other $z'\in \R^k$.
\end{definition}

\begin{definition}
An index $I=\{i\}$ is said to be irrelevant if for any $x\in\R^{n-1}$ we have $xIz\sim xIz'$ for any $z,z'\in\R$; otherwise it is said to be essential.
\end{definition}

The next result is the main theorem proved by Debreu in \cite{De60}.

\begin{theorem}\label{Debreu}
Let $\succeq$ be a complete preordering of $\R^n$ such that $\forall y \in \R^n$ the sets $\{ x \in \R^n : x \succeq y\}$ and $\{ x \in \R^n : y \succeq x\}$ are closed. If $\succeq$ satisfies the Sure Thing Principle and more than two indexes are essential, then there is a continuous utility function determined up to an increasing affine transformation\footnote{For any other order preserving, continuous $V:\R^n\rightarrow\R$ such that $V(x)=\sum_{i=1}^N V_i(x_i)$ where $V_i:\R\rightarrow\R$ for $i=1,\dots,N$, there exists real constants $a>0,b$ such that $U(x)=a V(x)+b$ for every $x\in\R^N$.}, i.e. there is $U : \R^n \to \R$ order preserving and continuous such that
\begin{equation}
U(x) = \sum_{i=1}^{n} U_i(x_i) \qquad \forall x=(x_1, ... , x_n) \in \R^n,
\end{equation}
for $U_i : \R \to \R$ $\forall i=1, ... , n$. 
\end{theorem}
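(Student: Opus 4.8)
The plan is to prove the theorem in two stages: first produce \emph{some} continuous representation of $\succeq$, and then upgrade it to an \emph{additively separable} one by exploiting the Sure Thing Principle. For the first stage I would invoke the classical Eilenberg--Debreu utility representation theorem: since $\R^n$ is a connected and second countable topological space and $\succeq$ is a complete preorder whose upper and lower contour sets $\{x\in\R^n:x\succeq y\}$ and $\{x\in\R^n:y\succeq x\}$ are closed for every $y$, there exists a continuous $V:\R^n\to\R$ with $x\succeq y\Leftrightarrow V(x)\geq V(y)$. This already supplies an order preserving continuous utility; what remains is to show that $V$ can be replaced by a representation of the form $\sum_{i=1}^n U_i(x_i)$.

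For the second stage I would first reformulate the Sure Thing Principle as \emph{coordinate independence}: for every $I\subset N$ the induced order $\succeq_I$ on $\R^{|I|}$ is well defined, i.e. independent of the completion $z$ chosen on $I^c$. Equivalently, for each coordinate $i$ the conditional preference obtained by freezing the remaining coordinates does not depend on the frozen values; combined with the continuity of $V$ this endows each factor $\R$ with a one dimensional continuous order enjoying the intermediate value property. The hypothesis that at least three indices are essential guarantees that no factor is degenerate and, crucially, that the pairwise independence relations across coordinates interlock.

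The heart of the argument --- and the step I expect to be the main obstacle --- is the passage from mere independence to genuine additivity. Here I would follow the machinery of additive conjoint measurement. Using the connectedness of $\R$ and the intermediate value property of $V$ one first establishes \emph{restricted solvability} (given two alternatives one can always adjust a single coordinate so as to achieve indifference within the attainable range) together with an \emph{Archimedean} property (inferred from continuity, which forbids infinitely long bounded standard sequences). With three or more essential factors, independence across all pairs of coordinates forces the \emph{double cancellation} (Thomsen) condition automatically; this is precisely why ``more than two indexes are essential'' cannot be dropped, since for exactly two factors the Thomsen condition is an independent requirement not implied by independence alone. Anchoring standard sequences at a fixed reference point then yields, on each factor $\R$, a continuous function $U_i$ such that $\sum_{i=1}^n U_i(x_i)$ is a strictly increasing transform of $V$, and is therefore itself order preserving and continuous, establishing the additive representation.

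Finally, for the uniqueness clause I would show that any two additive continuous representations of the same structure, in the presence of at least two essential factors, are related by a common positive affine rescaling. Normalizing the additive constants at the reference point pins down each $U_i$ up to a positive multiplicative factor, and comparing standard sequences built on two distinct essential factors forces these factors to coincide; passing to the sum $U=\sum_i U_i$ then gives the claimed uniqueness up to an increasing affine transformation $U\mapsto aU+b$ with $a>0$.
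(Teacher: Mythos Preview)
The paper does not prove this theorem at all: it is stated in the appendix as ``the main theorem proved by Debreu in \cite{De60}'' and is used as a black box in the proof of Lemma~\ref{Step 1}. There is therefore no ``paper's own proof'' to compare your proposal against.

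That said, your outline is a faithful sketch of the classical argument that appears in Debreu's original paper and in the subsequent literature on additive conjoint measurement (Krantz--Luce--Suppes--Tversky, Wakker). The two-stage decomposition (first a continuous representation via Eilenberg--Debreu, then additivity via solvability, the Archimedean property, and the automatic Thomsen condition when three or more factors are essential) is exactly the standard route, and your remark that the ``more than two essential indexes'' hypothesis is what makes double cancellation a consequence rather than an independent axiom is correct and is the crux of why the $n\geq 3$ case is cleaner than $n=2$. The uniqueness argument you describe is also the standard one. So your proposal is appropriate as an outline, but be aware that the solvability/standard-sequence construction is where the real technical work lies and would need to be fleshed out considerably to constitute a proof; in the context of this paper, simply citing \cite{De60} is what the authors do.
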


\subsection{Auxiliary results}

\begin{lemma}\label{extension:brv NEW}
Let $\Ucal:\brv\rightarrow\R$ be given, such that $\Ucal$ satisfies ($\Gcal$-PC) and
\begin{equation}
\label{weak monot general}    
 g_1,g_2\in\brvG,g_1(\omega)\leq g_2(\omega)\,\forall\omega\in\Omega\Longrightarrow \Ucal(g_1)\leq \Ucal(g_2).
\end{equation}
Let $\hPhi:\Scal(\Gcal)\rightarrow \R$ be $\Ucal$-order preserving on $\Scal(\Gcal)$, with $x\mapsto \hPhi(x)$ continuous on $\R$. Then $\hPhi$ extends to a ($\Gcal$-PC), $\Ucal$-order preserving functional on $\brvG$. 
\end{lemma}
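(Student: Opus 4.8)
The plan is to reduce everything to the behaviour of $\hPhi$ on constant functions, where continuity is assumed, by exploiting that $\Ucal$ takes no more values on $\brvG$ than it already does on constants. First I would record that $\hPhi$ is monotone on $\Scal(\Gcal)$: if $s_1,s_2\in\Scal(\Gcal)$ with $s_1\leq s_2$ pointwise, then \eqref{weak monot general} gives $\Ucal(s_1)\leq\Ucal(s_2)$, and since $\hPhi$ is $\Ucal$-order preserving this forces $\hPhi(s_1)\leq\hPhi(s_2)$. In the same way, two constants $c\ind_\Om,c'\ind_\Om$ with $\Ucal(c\ind_\Om)=\Ucal(c'\ind_\Om)$ must satisfy $\hPhi(c\ind_\Om)=\hPhi(c'\ind_\Om)$, a fact I will use repeatedly for well-posedness.

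Next, set $u(c):=\Ucal(c\ind_\Om)$ and $\phi(c):=\hPhi(c\ind_\Om)$. The map $u$ is nondecreasing by \eqref{weak monot general} and continuous by ($\Gcal$-PC) (since $c_n\to c$ in $\R$ yields $c_n\ind_\Om\to c\ind_\Om$ pointwise and norm bounded), while $\phi$ is continuous by hypothesis. The crucial observation is that for every $g\in\brvG$ with $M:=\snorm{g}$, monotonicity \eqref{weak monot general} applied to $-M\leq g\leq M$ gives $u(-M)\leq\Ucal(g)\leq u(M)$, so by the intermediate value theorem there is $c\in[-M,M]$ with $u(c)=\Ucal(g)$. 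I then define the extension by $\hPhi(g):=\phi(c)$ for such a $c$. This is well posed: any other $c'$ with $u(c')=\Ucal(g)$ satisfies $u(c)=u(c')$, hence $\phi(c)=\phi(c')$ by the remark above; and it agrees with the original $\hPhi$ on $\Scal(\Gcal)$, since for simple $s$ the chosen $c$ satisfies $\Ucal(c\ind_\Om)=\Ucal(s)$ with both arguments simple, so order preservation gives $\phi(c)=\hPhi(s)$.

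It then remains to verify the two desired properties. For $\Ucal$-order preservation on $\brvG$, I would take $g_1,g_2$, pick constants $c_1,c_2$ with $u(c_i)=\Ucal(g_i)$, and chain equivalences: $\hPhi(g_1)\leq\hPhi(g_2)\Leftrightarrow\phi(c_1)\leq\phi(c_2)\Leftrightarrow\Ucal(c_1\ind_\Om)\leq\Ucal(c_2\ind_\Om)\Leftrightarrow\Ucal(g_1)\leq\Ucal(g_2)$, the middle equivalence being order preservation of $\hPhi$ on the simple functions $c_i\ind_\Om$. For ($\Gcal$-PC), given a norm bounded sequence $g_n\to g$ pointwise I would use ($\Gcal$-PC) of $\Ucal$ to get $\Ucal(g_n)\to\Ucal(g)$, choose the representing constants $c_n,c$ inside a fixed compact interval $[-M,M]$, and argue by contradiction: a subsequence along which $\phi(c_{n_k})$ stays bounded away from $\phi(c)$ admits, by compactness, a further subsequence $c_{n_k}\to c^\ast$; then continuity of $\phi$ and $u$, together with $u(c_{n_k})=\Ucal(g_{n_k})\to\Ucal(g)=u(c)$, force $u(c^\ast)=u(c)$ and hence $\phi(c^\ast)=\phi(c)$, contradicting the separation. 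I expect the main obstacle to be precisely this last point: order preservation alone does not transport continuity from $\Ucal$ to $\hPhi$, so one cannot simply pass the identity $\hPhi(s)=h(\Ucal(s))$ (valid for the monotone correspondence $h$ relating $\hPhi$ and $\Ucal$ on simples) through a limit; it is the compactness-plus-continuity argument carried out on the constants that circumvents the possible lack of continuity of $h$.
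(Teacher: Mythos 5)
Your proof is correct, but it takes a genuinely different route from the paper's. The paper extends $\hPhi$ by setting $\Phi(g):=\inf\{\hPhi(s)\mid s\in\Scal(\Gcal),\ s\geq g\}$ and then establishes ($\Gcal$-PC) through a fairly delicate contradiction argument: it approximates $g_n$ and $g$ from above by simple functions realizing the infima, splits into two cases according to the sign of the persistent gap, and uses connectedness of $\hPhi(\R)$ to interpolate two constants $x_1,x_2$ whose images under $\Ucal$ separate $\Ucal(s_n)$ from $\Ucal(v_n)$, contradicting ($\Gcal$-PC) of $\Ucal$; order preservation is then checked by monotone approximation. You instead observe that, by \eqref{weak monot general} and the sandwich $-\snorm{g}\ind_\Omega\leq g\leq\snorm{g}\ind_\Omega$, the value $\Ucal(g)$ always lies in the range of the continuous map $c\mapsto\Ucal(c\ind_\Omega)$, so every $g$ is $\Ucal$-equivalent to a constant; defining the extension as $\hPhi$ of that constant makes well-posedness, consistency on $\Scal(\Gcal)$, and order preservation immediate from order preservation on simples, and reduces ($\Gcal$-PC) to a compactness-plus-continuity argument on a bounded interval of $\R$. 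Both arguments lean on the same two ingredients --- continuity of $x\mapsto\hPhi(x)$ on constants and ($\Gcal$-PC) of $\Ucal$ --- but yours isolates the real reason the extension exists (the problem is one-dimensional, since $\Ucal(\brvG)$ coincides with $\Ucal$'s range on constants) and avoids the infimum construction and the two-case $\varepsilon$-chasing entirely; the resulting extension necessarily coincides with the paper's, since any ($\Gcal$-PC) extension is determined by its values on simple functions. The one point worth making explicit in a write-up is that the intermediate value theorem is what furnishes the representing constant, which requires the continuity of $c\mapsto\Ucal(c\ind_\Omega)$ that you correctly derive from ($\Gcal$-PC).
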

\begin{proof}
Observe that since $\hPhi$ is $\Ucal$-order preserving on $\Scal(\Gcal)$, we have $\hPhi(g_1)\leq \hPhi(g_2)$ whenever we take $g_1,g_2\in\Scal(\Gcal)$ with $g_1(\omega)\leq g_2(\omega)\,\forall\,\omega\in\Omega$.
Define 
$$\Phi(g):=\inf\{\hPhi(s)\mid s\in\Scal(\Gcal),s(\omega)\geq g(\omega)\,\forall\,\omega\in\Omega\},\quad g\in\brvG.$$  
Clearly $\hPhi(\norm{g}_\infty)\geq \Phi(g)\geq \hPhi(-\norm{g}_\infty)$, and the definition is consistent i.e. $\Phi(g)=\hPhi(g)$ for every $g\in\Scal(\Gcal)$. $\Phi$ is monotone as
\begin{equation}
    \label{weak monot Phi}
    g_1,g_2\in\brvG,g_1(\omega)\leq g_2(\omega)\,\forall\omega\in\Omega\Longrightarrow \Phi(g_1)\leq \Phi(g_2).
\end{equation}

Moreover we have the following property: let $\{h_n\}_n$ be a minimizing sequence for $\Phi(g)$ and $\{g_n\}_n\subset \Scal(\Gcal)$ is such that $g_n\geq f$, and $\norm{g_n-g}_\infty\leq \frac{1}{n}$, then $s_n=h_n\wedge g_n$ is still simple for each $n$, $\norm{s_n-f}\leq \frac1n\rightarrow_n 0$, $\Phi(f)\leq \hPhi(s_n)\leq \hPhi(h_n)\rightarrow_n \Phi(f)$ by monotonicity of $\hPhi$ on simple functions. 

We now show that $g\mapsto \Phi(g)$ is ($\Gcal$-PC) on $\brvG$. Take a norm bounded sequence $\{g_n\}_n\subseteq \brvG$ such that  $g_n\rightarrow_n g $ pointwise. Suppose that $\Phi(g_n)$ is not converging to $\Phi(g)$. Then up to taking a subsequence (relabelled again with $n$) we have $2\varepsilon\leq \inf_n\abs{\Phi(g_n)-\Phi(g)}$ for some $\varepsilon>0$.   

Observe, by what was argued at the beginning of this proof, that we can take sequences of simple functions $s_n,v_n$ with, for all $n$ big enough,
\begin{center}
  \begin{tabular}{  c c c c c}
 $s_n\geq g_n$  & (i); & $\,\,\,$ & $v_n\geq g$ & (ii);\\
 $\norm{s_n-g_n}_\infty\leq\frac1n$ & (iii);  & $\,\,\,$ &  $\norm{v_n-g}_\infty\leq \frac1n$ & (iv).
\end{tabular}  
\end{center}
and 
\begin{center}   
\begin{tabular}{ c c}
   $\abs{\Phi(s_n)-\Phi(g_n)}=\abs{\hPhi(s_n)-\Phi(g_n)}\leq\frac1n$& (v); \\ 
   $\abs{\Phi(v_n)-\Phi(g)}=\abs{\hPhi(v_n)-\Phi(g)}\leq\frac1n$ & (vi).   
\end{tabular} 
\end{center}
which yield the existence of $N$ such that $\varepsilon\leq \inf_{n>N}\abs{\Phi(s_n)-\Phi(v_n)}$. Now we have two alternatives: either (a) there is a subsequence (to be relabelled again with $n$) such that $\hPhi(s_n)=\Phi(s_n)\geq \varepsilon+\Phi(v_n)=\varepsilon+\hPhi(v_n)$ for all $n$, or (b) there is a subsequence (to be relabelled again with $n$) such that $\hPhi(s_n)=\Phi(s_n)\leq -\varepsilon+\Phi(v_n)=-\varepsilon+\hPhi(v_n)$  for all $n$.

In case (a)  observe that for $n$ big enough we have 
by (v) and (vi) that
\begin{center}   
\begin{tabular}{ c c c c c}
   $\hPhi(v_n)\leq \frac{\varepsilon}{2}+\Phi(g)$ & (vii); &\,\,\,&   $\Phi(g_n)+\frac{\varepsilon}{3}\geq \hPhi(s_n)$ & (viii).   
\end{tabular} 
\end{center}
 Then, since in case (a) it holds that $\hPhi(s_n)\geq \varepsilon+\hPhi(v_n)\geq \varepsilon+\Phi(g)$ (the last inequality coming from (ii) and the definition of $\Phi$),  we conclude  $\frac{\varepsilon}{3}+\Phi(g_n)\geq \hPhi(s_n)\geq \varepsilon+\Phi(g)$, so that
 \begin{center}   
\begin{tabular}{ c c }
   $\Phi(g_n)\geq \frac{2\varepsilon}{3}+\Phi(g)$ & (ix). 
\end{tabular} 
\end{center}
We infer that
\begin{align*}
  \sup_{x\in\R}\hPhi(x)&\geq \hPhi(s_n)\stackrel{(i)}{\geq} \Phi(g_n)\stackrel{(ix)}{\geq}\frac{2\varepsilon}{3}+ \Phi(g)\\
  &> \frac{\varepsilon}{2}+\Phi(g)\stackrel{(vii)}{\geq} \hPhi(v_n)\stackrel{(ii)}{\geq} \Phi(g)\geq \inf_{x\in\R}\hPhi(x).  
\end{align*}

Since $x\mapsto \hPhi(x)$ is norm continuous on $\R$ and $\hPhi(\R)$ is connected, there exist $x_1,x_2\in\R$ with 
$$\hPhi(s_n)\geq \Phi(g_n)\geq\frac{2\varepsilon}{3}+ \Phi(g)> \Phi(x_1)>\Phi(x_2)>\frac{\varepsilon}{2}+\Phi(g)\geq \hPhi(v_n)\geq \Phi(g)$$ and in particular since $\Phi(x_1)=\hPhi(x_1),\Phi(x_2)=\hPhi(x_2)$
$$\hPhi(s_n)> \hPhi(x_1)>\hPhi(x_2)> \hPhi(v_n)$$
We see that we must have $x_1>x_2$ by \eqref{weak monot Phi}.  Since for $s_1,s_2$ simple functions $\hPhi(s_1)< \hPhi(s_2)$ if and only if $\Ucal(s_1)<\Ucal(s_2)$ (see Lemma \ref{lemmaop}) we conclude that $\Ucal(s_n)>\Ucal(x_1)>\Ucal(x_2)>\Ucal(v_n)$. This is clearly a contradiction since $\Ucal$ satisfies ($\Gcal$-PC)  and we have  $v_n \rightarrow_n g$ and $s_n\rightarrow_n g$ pointwise (the second convergence following from (iii), (iv)).

In case (b) for $n$ big enough  $\hPhi(v_n)\leq \frac{\varepsilon}{2}+\Phi(g)$ and
$$ \inf_{x\in\R}\hPhi(x)<\hPhi(s_n)\leq-\varepsilon+ \hPhi(v_n)\leq -\frac{\varepsilon}{2}+\Phi(g)<  \Phi(g) \leq  \hPhi(s_n)<  \sup_{x\in\R}\hPhi(x).$$
Since $\hPhi(\R)$ is again connected for the same reason as in case (a), there exist $x_1,x_2\in\R$ with 
$$ \hPhi(s_n)\leq-\varepsilon+ \hPhi(v_n)\leq -\frac{\varepsilon}{2}+\Phi(g)<\Phi(x_1)<\Phi(x_2)<  \Phi(g) \leq  \hPhi(s_n) $$ and $x_1<x_2$ by \eqref{weak monot Phi}.  Using again Lemma \ref{lemmaop} we conclude that $\Ucal(s_n)<\Ucal(x_1)<\Ucal(x_2)<\Ucal(v_n)$. This is clearly contradicts again ($\Gcal$-PC).

We finally show that $\Phi$ is $\Ucal$-order preserving on $\brvG$. Let indeed $g_1,g_2\in\brvG$ be given. Then we can take sequences of simple functions $g^1_n\uparrow g_1$ and $g^2_n\downarrow g_2$.
If $\Ucal(g_1)\leq \Ucal(g_2)$, by \eqref{weak monot general} we have  $\Ucal(g^1_n)\leq \Ucal(g_1)\leq \Ucal(g_2)\leq \Ucal(g^2_n)$, thus $\Ucal(g^1_n)\leq \Ucal(g^2_n)$, and since $\hPhi$ is $\Ucal$-order preserving on $\Scal(\Gcal)$ we have $\Phi(g^1_n)=\hPhi(g^1_n)\leq \Phi(g^2_n)=\Phi(g_n^2)$. Taking the limit the inequality is preserved and, recalling that $\hPhi$ is ($\Gcal$-PC), we get $\Phi(g_1)\leq \Phi(g_2)$. If conversely $\Phi(g_1)\leq \Phi(g_2)$, by \eqref{weak monot Phi} we get $\Phi(g^1_n)\leq \Phi(g_1)\leq \Phi(g_2)\leq \Phi(g^2_n)$, thus $\hPhi(g^1_n)=\Phi(g^1_n)\leq \Phi(g^2_n)=\hPhi(g_n^2)$ and since $\hPhi$ is $\Ucal$-order preserving we get $\Ucal(g^1_n)\leq \Ucal(g_n^2)$ for all $n$. Passing to the limit and recalling that $\Ucal$ is ($\Gcal$-PC) we conclude $\Ucal(g_1)\leq \Ucal(g_2)$.

\end{proof}

\begin{lemma}
\label{lemma: aux1}
Assume that $T$ satisfies ($\Gcal$-QL), $T(0)=0$, and the property in Eq. \eqref{weak monot}.
Then property in Eq. \eqref{formulalternativeN} holds.
\end{lemma}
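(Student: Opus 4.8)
The plan is to establish the set equality in \eqref{formulalternativeN} by a double inclusion, writing $\Mcal_\Gcal:=\{A\in\Gcal\mid T(x\ind_A)=0\text{ for all }x\in\R\}$ for its right-hand side. The inclusion $\Ncal_\Gcal\subseteq\Mcal_\Gcal$ needs only $T(0)=0$ together with the raw definition \eqref{def:ncal initial}: given $N\in\Ncal_\Gcal$, I would instantiate the defining identity with $g_1=0$ and $g_2$ equal to the constant $x$ (which lies in $\brvG$), obtaining $T(x\ind_N)=T(0+x\ind_N)=T(0)=0$ for every $x\in\R$, hence $N\in\Mcal_\Gcal$. This direction uses neither ($\Gcal$-QL) nor \eqref{weak monot}.

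For the reverse inclusion $\Mcal_\Gcal\subseteq\Ncal_\Gcal$ the first move is to upgrade the hypothesis from constants to all bounded functions: if $A\in\Mcal_\Gcal$ and $h\in\brvG$ is arbitrary, I would set $c=\snorm{h}$, so that $-c\ind_A\leq h\ind_A\leq c\ind_A$ holds pointwise on $\Omega$ with all three functions in $\brvG$. Weak monotonicity \eqref{weak monot} then sandwiches $T(-c\ind_A)\leq T(h\ind_A)\leq T(c\ind_A)$, and since the two outer values vanish because $A\in\Mcal_\Gcal$, we conclude $T(h\ind_A)=0$ for \emph{every} $h\in\brvG$, not merely for constants.

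To finish, fix $A\in\Mcal_\Gcal$ and arbitrary $g_1,g_2\in\brvG$; the goal is $T(g_1+g_2\ind_A)=T(g_1)$. By the previous step both $T((g_1+g_2)\ind_A)$ and $T(g_1\ind_A)$ equal $0$, that is, the identity $T((g_1+g_2)\ind_A+\bar g\ind_{\Omega\setminus A})=T(g_1\ind_A+\bar g\ind_{\Omega\setminus A})$ holds for the choice $\bar g=0$. I would then apply the equality form of ($\Gcal$-QL), the consequence recorded in the footnote to its definition, to the pair $(g_1+g_2,\,g_1)$; this propagates the equality to every $g\in\brvG$. Choosing $g=g_1$ and using the pointwise identities $(g_1+g_2)\ind_A+g_1\ind_{\Omega\setminus A}=g_1+g_2\ind_A$ and $g_1\ind_A+g_1\ind_{\Omega\setminus A}=g_1$ yields $T(g_1+g_2\ind_A)=T(g_1)$. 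Since $g_1,g_2$ are arbitrary, $A\in\Ncal_\Gcal$.

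The easy inclusion is essentially definitional, so all the substance sits in the reverse direction, and I expect the load-bearing step to be the monotone squeeze of the second paragraph, which collapses $T(\cdot\,\ind_A)$ to the identically-zero functional on $A$. Once that localization is secured, ($\Gcal$-QL) performs the purely formal task of detaching the part of the arguments supported off $A$, and no quantitative estimates remain.
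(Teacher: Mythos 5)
Your proof is correct and follows essentially the same route as the paper's: the easy inclusion by taking $g_1=0$ in \eqref{def:ncal initial}, and for the converse a monotone sandwich between $\pm$ multiples of $\ind_A$ of size $\snorm{\cdot}$ combined with the equality form of ($\Gcal$-QL). Your organization is marginally tidier — by first collapsing $T(\cdot\,\ind_A)$ to zero on all of $\brvG$ and then choosing $g=g_1$ in the ($\Gcal$-QL) extension, you absorb in one stroke the identity $T(g_1\ind_{\Omega\setminus A})=T(g_1)$, which the paper must establish by a separate second sandwich (and explicitly flags as not following automatically from ($\Gcal$-QL)).
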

\begin{proof}
Clearly $\Ncal_\Gcal\subseteq \left\{A\in\Gcal\mid T(x\ind_A)=0\text{ for all }x\in\R\right\}$ by taking $g_1=0$ in Eq. \eqref{def:ncal initial}, while for the converse start  observing that given $g_1,g_2\in\brvG$, since for all $\omega\in\Omega$
\begin{align*}
  &g_1(\omega)\ind_{\Omega\setminus A}(\omega)-(\snorm{g_1}+\snorm{g_2})\ind_A(\omega)\\
  &\leq g_1(\omega)+g_2(\omega)\ind_{A}(\omega)\\
  &\leq g_1(\omega)\ind_{\Omega\setminus A}(\omega)+(\snorm{g_1}+\snorm{g_2})\ind_A(\omega),  
\end{align*}
then we have by Eq. \eqref{weak monot} that 
\begin{align*}
  &T(g_1\ind_{\Omega\setminus A}-(\snorm{g_1}+\snorm{g_2})\ind_A\\
  &\leq T(g_1+g_2\ind_{A})\\
  &\leq T(g_1\ind_{\Omega\setminus A}+(\snorm{g_1}+\snorm{g_2})\ind_A).  
\end{align*}
 If  $A\in\Gcal$ satisfies $T(x\ind_A)=0=T(0)$ for all $x\in\R$, by ($\Gcal$-QL) we get that  $T(g_1\ind_{\Omega\setminus A}\pm(\snorm{g_1}+\snorm{g_2})\ind_A)=T(g_1\ind_{\Omega\setminus A})$. We conclude showing that $T(g_1\ind_{\Omega\setminus A})=T(g_1)$ (which does not come automatically from ($\Gcal$-QL)) yielding  $T(g_1+g_2\ind_A)=T(g_1)$ and, in turns, $A\in\Ncal_\Gcal$ as defined in Eq. \eqref{def:ncal initial}. Observe indeed that, arguing as above, $T(g_1\ind_{\Omega\setminus A}-\snorm{g_1}\ind_A)\leq T(g_1)\leq T(g_1\ind_{\Omega\setminus A}-\snorm{g_1}\ind_A)$, the latter providing the desired equality via ($\Gcal$-QL).

\end{proof}

\begin{lemma}[Properties of irrelevant events]
\label{rem: propertiesirrelevant}
Suppose that $T$ satisfies $T(0)=0$ and ($\Gcal$-QL). Then:
\begin{enumerate}[(i).]
    \item if $A,B\in\Gcal$, $A\subseteq B$ and $B\in\Ncal_\Gcal$ then $A\in\Ncal_\Gcal$;
    \item whenever $\{A_n\}_n\subseteq \Ncal_\Gcal$ is a finite collection  of irrelevant events, their union is irrelevant as well (i.e. $\bigcup_n A_n\in\Ncal_\Gcal$). 
\end{enumerate}
If additionally $T$ satisfies ($\Gcal$-PC), item (ii) holds also for countable collections 
\end{lemma}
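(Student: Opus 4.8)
The plan is to derive everything directly from the defining property
\[
N\in\Ncal_\Gcal \iff T(g_1+g_2\ind_N)=T(g_1)\quad\forall\,g_1,g_2\in\brvG,
\]
peeling sets off one at a time rather than trying to handle unions globally. Throughout, the only structural input is this definition, together with ($\Gcal$-PC) for the countable case.

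For item (i) I would start from $A\subseteq B$, which gives the algebraic identity $\ind_A=\ind_A\ind_B$, hence $g_2\ind_A=(g_2\ind_A)\ind_B$ for every $g_2\in\brvG$. Fixing arbitrary $g_1,g_2\in\brvG$ and applying the definition of $\Ncal_\Gcal$ to $B$ with the choice $h_1:=g_1$ and $h_2:=g_2\ind_A\in\brvG$, I obtain
\[
T\big(g_1+g_2\ind_A\big)=T\big(g_1+(g_2\ind_A)\ind_B\big)=T(g_1),
\]
so that $A\in\Ncal_\Gcal$.

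For the finite union in item (ii), by induction it suffices to treat $A\cup B$ with $A,B\in\Ncal_\Gcal$. I would disjointify, writing $\ind_{A\cup B}=\ind_A+\ind_{B\setminus A}$, and note that $B\setminus A\subseteq B$, so $B\setminus A\in\Ncal_\Gcal$ by item (i). Then for arbitrary $g_1,g_2\in\brvG$ I apply the defining property twice: first to the null set $A$ with $h_1:=g_1+g_2\ind_{B\setminus A}$ and $h_2:=g_2$, giving $T(g_1+g_2\ind_{B\setminus A}+g_2\ind_A)=T(g_1+g_2\ind_{B\setminus A})$; then to the null set $B\setminus A$ to reduce $T(g_1+g_2\ind_{B\setminus A})=T(g_1)$. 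Chaining the two equalities, and recalling that $g_2\ind_A+g_2\ind_{B\setminus A}=g_2\ind_{A\cup B}$, yields $T(g_1+g_2\ind_{A\cup B})=T(g_1)$.

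For the countable case I would set $B_N:=\bigcup_{n=1}^N A_n$, which lies in $\Ncal_\Gcal$ by the finite case, and put $A:=\bigcup_{n\geq 1}A_n\in\Gcal$. Fixing $g_1,g_2\in\brvG$, the sequence $g_1+g_2\ind_{B_N}$ is norm bounded by $\snorm{g_1}+\snorm{g_2}$ and converges pointwise on $\Omega$ to $g_1+g_2\ind_A$ because $B_N\uparrow A$. Since $T(g_1+g_2\ind_{B_N})=T(g_1)$ for every $N$, ($\Gcal$-PC) lets me pass to the limit and conclude $T(g_1+g_2\ind_A)=T(g_1)$, i.e. $A\in\Ncal_\Gcal$. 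The only point requiring genuine care is this last step: one must arrange a uniformly bounded, everywhere pointwise convergent approximating sequence in order to invoke ($\Gcal$-PC), which is precisely why the countable statement carries this extra hypothesis, whereas items (i) and (ii) need nothing beyond the defining property of $\Ncal_\Gcal$.
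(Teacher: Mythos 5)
Your proposal is correct and follows essentially the same route as the paper: item (i) via the identity $g_2\ind_A=(g_2\ind_A)\ind_B$ and the defining property of $\Ncal_\Gcal$ applied to $B$, item (ii) by disjointifying (using (i)) and iterating the defining property, and the countable case by passing to the limit of the finite unions with ($\Gcal$-PC) on a uniformly bounded, pointwise convergent sequence. The only cosmetic difference is that you induct two sets at a time while the paper reduces at once to a pairwise disjoint family; the substance is identical.
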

\begin{proof}
(i) follows observing that $T(g_1+g_2\ind_A)=T(g_1+g_2\ind_A\ind_B)=T(g_1)$ using in the last equality that $g_2\ind_A\in\brvG$ and $B\in\Ncal_\Gcal$. 
We check (ii) for the countable case $\{A_n\}_{n\in\NN}$, since the finite one is simpler but similar. Without loss of generality we can assume the sets $A_n$ of being pairwise disjoint, otherwise we might reduce to this setup taking suitable subsets (which will still belong to $\Ncal_\Gcal$ by (i)). For any $g_1,g_2\in\brvG$ we have for any $g_1,g_2\in\brvG$ \[T(g_1)=T(g_1+g_2\ind_{A_1})=\dots=T\left(g_1+g_2\sum_{j=1}^N\ind_{A_j}\right)=T\left(g_1+g_2\ind_{\bigcup_{j=1}^NA_j}\right)\]
by iteration of the defining property in Eq. \eqref{def:ncal initial}. Thus $\bigcup_{j=1}^NA_j\in \Ncal_{\Gcal}$. Taking the limit as $N\rightarrow\infty$ and using ($\Gcal$-PC) we get analogously that $\bigcup_nA_n\in\Ncal_\Gcal$ (observe that the pointwise limit of $g_1+g_2\ind_{\bigcup_{j=1}^NA_j}$ as $N\rightarrow \infty$ is in fact $g_1+g_2\ind_{\bigcup_{n}A_n}$).

\end{proof}

\begin{lemma}[On monotonicity]
\label{remonmonot}
Assume $T:\mathcal{L}^{\infty}(\Omega,\Fcal)\to \R$ satisfies $T(0)=0$, ($\Gcal$-Mo), ($\Gcal$-QL), and that the restriction of $T$ on $\brvG$ is $\snorm{\cdot}$ continuous (which is implied by ($\Gcal$-PC)). Then $T$ is monotone on $\brvG$, in that Eq. \eqref{weak monot} holds. Moreover, the monotonicity is strict: if $g_1,g_2\in\brvG$ satisfy $g_1(\omega)\leq g_2(\omega)\,\forall\omega\in\Omega$ and $\{g_1<g_2\}\in\Gcal\setminus\Ncal_\Gcal$, then $T(g_1)<T(g_2)$.
\end{lemma}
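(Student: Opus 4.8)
The plan is to establish the weak inequality \eqref{weak monot} first and then bootstrap it, with the single strict input ($\Gcal$-Mo), to the strict statement. First I would prove weak monotonicity on simple functions by a one-coordinate-at-a-time telescoping. Given $g_1,g_2\in\brvG$ with $g_1\leq g_2$ pointwise, I choose a common finite $\Gcal$-measurable partition $A_1,\dots,A_d$ so that $g_1=\sum_i x_i\ind_{A_i}$ and $g_2=\sum_i y_i\ind_{A_i}$ with $x_i\leq y_i$, and interpolate via $h_j=\sum_{i\leq j}y_i\ind_{A_i}+\sum_{i>j}x_i\ind_{A_i}$, so that $h_0=g_1$ and $h_d=g_2$. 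Passing from $h_{j-1}$ to $h_j$ only changes the value on $A_j$ from $x_j$ to $y_j$: if $A_j\in\Ncal_\Gcal$ the value of $T$ is unchanged directly by the definition \eqref{def:ncal initial} of an irrelevant event (write $h_{j-1}=h+x_j\ind_{A_j}$ with $h$ supported off $A_j$), whereas if $A_j\in\Gcal\setminus\Ncal_\Gcal$ and $x_j<y_j$ then ($\Gcal$-Mo) gives $T(h_{j-1})<T(h_j)$; in all cases $T(h_{j-1})\leq T(h_j)$, hence $T(g_1)\leq T(g_2)$. To reach general $g_1\leq g_2$ in $\brvG$ I would approximate uniformly, taking dyadic simple functions $g_1^n\uparrow g_1$ and $g_2^n\downarrow g_2$ (so that $g_1^n\leq g_2^n$ pointwise and both converge in $\snorm{\cdot}$), apply the simple case, and pass to the limit using the assumed $\snorm{\cdot}$-continuity of $T$ on $\brvG$.

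For the strict part, set $\bar A=\{g_1<g_2\}\in\Gcal\setminus\Ncal_\Gcal$. The crucial reduction is to trade the merely pointwise gap for a uniform one on a non-irrelevant subset. I would decompose $\bar A$ as the countable union, over $n\in\NN$ and integers $k$ with $|k|/n\leq\snorm{g_1}$, of the sets $B_{n,k}=\{g_2-g_1\geq 2/n\}\cap\{k/n\leq g_1<(k+1)/n\}$. Since $\bar A\notin\Ncal_\Gcal$ while $\Ncal_\Gcal$ is closed under countable unions by Lemma \ref{rem: propertiesirrelevant}, at least one piece $B:=B_{n,k}$ lies in $\Gcal\setminus\Ncal_\Gcal$; on $B$ one has $g_1\leq a$ and $g_2\geq b$ with the constants $a=(k+1)/n<b=(k+2)/n$. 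I would then run the sandwich with the common outside profile $g_2\ind_{\Omega\setminus B}$: weak monotonicity gives $T(g_1)\leq T(g_1\ind_B+g_2\ind_{\Omega\setminus B})\leq T(a\ind_B+g_2\ind_{\Omega\setminus B})$, property ($\Gcal$-Mo) applied on $B\in\Gcal\setminus\Ncal_\Gcal$ with $a<b$ furnishes the strict step $T(a\ind_B+g_2\ind_{\Omega\setminus B})<T(b\ind_B+g_2\ind_{\Omega\setminus B})$, and weak monotonicity once more gives $T(b\ind_B+g_2\ind_{\Omega\setminus B})\leq T(g_2)$; chaining these yields $T(g_1)<T(g_2)$.

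The main obstacle is precisely this last reduction from a pointwise to a uniform gap: it amounts to knowing that an increasing sequence of irrelevant events cannot exhaust the non-irrelevant event $\bar A$, i.e. the countable-union closure of $\Ncal_\Gcal$. That closure is the part of Lemma \ref{rem: propertiesirrelevant} requiring ($\Gcal$-PC), which is in force in the standing setting (Assumption \ref{ass:conditional}); norm continuity alone suffices for the weak part. Everything else is a routine combination of weak monotonicity with the single strict input ($\Gcal$-Mo), and ($\Gcal$-QL) enters only implicitly through Lemma \ref{rem: propertiesirrelevant}.
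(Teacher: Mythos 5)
Your proposal is correct and follows essentially the same strategy as the paper: weak monotonicity is obtained by telescoping one cell of a common partition at a time (using ($\Gcal$-Mo) on non-null cells and the definition \eqref{def:ncal initial} on null ones) and then passing to general $g_1\leq g_2$ by uniform approximation with simple functions; the strict part is obtained by locating a non-irrelevant subset of $\{g_1<g_2\}$ carrying a uniform gap and running a sandwich argument around a single application of ($\Gcal$-Mo). The one place where you genuinely deviate is the strict step: the paper slices only by gap size, taking $A_N=\{g_1+\tfrac1N<g_2\}$, and then compares $\sup_{A_N}(g_1)+\tfrac1N$ with $\inf_{A_N}(g_2)$ --- an inequality that does not actually follow from the pointwise gap (e.g.\ $g_1(\omega)=\omega$, $g_2(\omega)=\omega+\tfrac2N$ on $A_N$). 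Your finer double-indexed decomposition $B_{n,k}=\{g_2-g_1\geq 2/n\}\cap\{k/n\leq g_1<(k+1)/n\}$ produces honest constants $a<b$ with $g_1\leq a$ and $g_2\geq b$ on the selected non-null piece, so your version of the sandwich is airtight and in fact repairs this imprecision. You are also right, and commendably explicit, that both arguments lean on the countable-union closure of $\Ncal_\Gcal$ from Lemma \ref{rem: propertiesirrelevant}, which needs ($\Gcal$-PC) rather than mere $\snorm{\cdot}$-continuity; this is a shared reliance with the paper's own proof and is harmless since the lemma is only ever invoked under Assumption \ref{ass:conditional}.
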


\begin{proof}
Consider $g_1,g_2\in \brvG$, $g_1(\omega)\leq g_2(\omega)\,\forall\omega\in\Omega$. We consider the set $A=\{g_1<g_2\}$ and $\Omega\setminus A=\{g_1=g_2\}$. If  $A\in \Ncal_\Gcal$, $T(g_1)= T(g_1\ind_A+g_1\ind_{\Omega\setminus A})=T(g_1\ind_A+g_2\ind_{\Omega\setminus A})=T(g_2)$.

 We now suppose $A\in\Gcal\setminus \Ncal_\Gcal$. We prove that $T(g_1)\leq T(g_2)$.
 For the moment, let us suppose that $g_1,g_2\in\Scal(\Gcal)$ are simple functions, with $g_1(\omega)\leq g_2(\omega)$ for all $\omega\in\Omega$ and $\{g_1<g_2\}\in\Gcal\setminus\Ncal_\Gcal$. We show that $T(g_1)\leq T(g_2)$.
 We can write $g_1=\sum_{A\in\pi}x^1_A\ind_A$, $g_2=\sum_{A\in\pi}x^2_A\ind_A$ for a (common) partition $\pi\subseteq \Gcal$. Then $\{g_1<g_2\}=\bigcup\{A\in\pi\mid x_A^1<x_A^2\}$. By Lemma \ref{rem: propertiesirrelevant} we must have that at least one of the sets among the $\{A\in\pi\mid x_A^1<x_A^2\}$ needs not belong to $\Ncal_\Gcal$. Call $\pi^+:=\{A\in\pi\mid A\in\Gcal\setminus \Ncal_\Gcal, x^1_A<x^2_A\}\neq \emptyset$, $\pi^=:=\{B\in\pi\mid B\in\Gcal\setminus \Ncal_\Gcal, x^1_B=x^2_B\}(\neq \emptyset)$  and $\pi^0:=\{B\in\pi\mid B\in\Ncal_\Gcal\}$. Clearly $\pi=\pi^+\cup\pi^=\cup\pi^0$. To simplify the discussion, in the equations below in case either $\pi^==\emptyset$ or $\pi^0=\emptyset$ the corresponding summation is set to $0$ by defult, so that it is simply ignored.  Setting $\bar C=\bigcup_{C\in \pi^0}C$, we see
 \small{
\begin{align}
T(g_1)&=T\left(\sum_{A\in\pi^+}x^1_A\ind_A+\sum_{B\in\pi^=}x^1_B\ind_B+\sum_{C\in\pi^0}x^1_C\ind_C\right)  \label{1-2}\\
&<T\left(x^2_{A_0}\ind_{A_0}+\sum_{A\in\pi^+, A\neq A_0}x^1_A\ind_A+\sum_{B\in\pi^=}x^1_B\ind_B+\sum_{C\in\pi^0}x^1_C\ind_C\right) 
\label{2-3}\\
&<T\left(\sum_{A\in\pi^+}x^2_A\ind_A+\sum_{B\in\pi^=}x^1_B\ind_B+\left(\sum_{C\in\pi^0}x^1_C\ind_C\right)\ind_{\bar C} \right)\label{3-4}\\
&=T\left(\sum_{A\in\pi^+}x^2_A\ind_A+\sum_{B\in\pi^=}x^2_B\ind_B+\left(\sum_{C\in\pi^0}x^2_C\ind_C\right)\ind_{\bar C}\right)\label{5-6}\\
&=T\left(\sum_{A\in\pi^+}x^2_A\ind_A+\sum_{B\in\pi^=}x^2_B\ind_B+\sum_{C\in\pi^0}x^2_C\ind_C\right)=T(g_2).\notag
\end{align}}%
where between \eqref{1-2} and\eqref{2-3} we selected $A_0\in\pi^+$ and used ($\Gcal$-Mo), and the same procedure was iterated to get to \eqref{3-4}. Between \eqref{3-4} and \eqref{5-6} we used the fact that $x^1_B=x^2_B$ for every $B\in\pi^=$, and $\bar C\in \Ncal_{\Gcal}$.
In particular $T(g_1)<T(g_2)$. Suppose now $g_1,g_2\in\brvG$ are generic, as before. Consider sequences $\{g_1^n\}_{n\in\NN},\{g_2^n\}_{n\in\NN}$ such that $g_1^n\uparrow_ng_1, g_2^n\downarrow_ng_2$ with $\snorm{g^n_1-g_1}\downarrow_n 0$,  $\snorm{g^n_2-g_2}\downarrow_n 0$. Observe that then by construction for every $n\in\NN$ we have $\{g_1<g_2\}\subseteq \{g_1^n<g_2^n\}$, so that  $\{g_1^n<g_2^n\}\in\Gcal\setminus\Ncal_\Gcal$ for every $n\in\NN$ (if this were not the case, we would conclude that $\{g_1<g_2\}\in\Ncal_\Gcal$ by Lemma \ref{rem: propertiesirrelevant}). By the previous argument we get $T(g_1^n)< T(g_2^n)$ for all $n\in \NN$. Passing to the limit we get the desired inequality.
As to strict monotonicity, observe that if $\{g_1<g_2\}\in\Gcal\setminus\Ncal_\Gcal$, we must have for some $N\in\NN$  that $A_N:=\{g_1+\frac1N<g_2\}\in\Gcal\setminus\Ncal_\Gcal$ (since $\{g_1<g_2\}=\bigcup_N\{g_1+\frac1N<g_2\}$, if all the terms in RHS were irrelevant, so would be LHS by Lemma \ref{rem: propertiesirrelevant}). Thus denoting $\sup_{E}h:=\sup_{\omega\in E}h(\omega)$ and analogously for $\inf_{E}h$
\begin{align*}
T(g_1)&=T(g_1\ind_{A_N}+g_1\ind_{\Omega\setminus A_N})\leq T\left(\sup_{A_N}(g_1)\ind_{A_N}+g_1\ind_{\Omega\setminus A_N}\right)\\&<T\left(\left(\sup_{A_N}(g_1)+\frac{1}{N}\right)\ind_{A_N}+g_1\ind_{\Omega\setminus A_N}\right)\\
&\leq T\left(\inf_{A_N}(g_2)\ind_{A_N}+g_1\ind_{\Omega\setminus A_N}\right)\leq T(g_2)    
\end{align*}
where the strict inequality comes from ($\Gcal$-Mo), and all the other inequalities follow from Eq. \eqref{weak monot} that was just proved above (observe that in particular $\inf_{A_N}(g_2)\ind_{A_N}+g_1\ind_{\Omega\setminus A_N}\leq g_2$ on $\Omega$).

\end{proof}

\subsection{Proofs of Section \ref{example:RM}}\label{scal:RM}
In this section we provide all the mathematical details of the announced Theorem \ref{scal:RM:prop}. Indeed one must be careful when shifting from pointwise defined random variables in $\brv$ to equivalences classes in $\Linfty$.  
In particular as anticipated in Section \ref{preliminaries} we will use the following suggestive notation: capital letters stand for equivalence classes, and lower case letters stand for measurable functions. Typically for $ f\in \brv, g\in\brvG $ $X\in\Linfty$ and $Y\in\LinftyG$ we will write $f\in X, g\in Y$ meaning that $X=[f]_{\PW}$ is the equivalence class of $f$, and $Y=[g]_{\PW}$ is the equivalence class of $g$. We recall that for any $A\in \Fcal$ we shall denote by $\ind_A$ the indicator function in $\brv$ and by $\Ind_A=[\ind_A]_{\PW}$ the equivalence class generated in $\Linfty$.

\begin{proof}[Proof of Theorem \ref{scal:RM:prop}]

We start proving the reverse implication.  Let $$\rho_\Gcal:\Linfty\rightarrow\LinftyG$$ be a conditional convex Risk Measure with $\rho_\Gcal(0)=0$ $\PW$-a.s..  Then 
for every $X\in\Linfty$, $Y\in \LinftyG$ we have $\rho_0(X+Y)=\Ep{\rho_{\Gcal}(X+Y)}=\rho_0(X)-\Ep{Y}$. For $X_1\leq X_2$ $\PW$-a.s. we immediately have  $\Ep{ \rho_{\Gcal}(X_1)}\geq \Ep{ \rho_{\Gcal}(X_2)}$ $\PW$-a.s.. 
\\ We recall that for any conditional convex Risk Measure null in $0$ we have the pasting property $\rho_{\Gcal}(X_1)\Ind_{A}+\rho_{\Gcal}(X_2)\Ind_{\Omega\setminus A}=\rho_{\Gcal}(X_1\Ind_{A}+X_2\Ind_{\Omega\setminus A})$ $\PW$-a.s. for any $X_1,X_2\in \Linfty$ and $A\in\Gcal$, which implies the property $\rho_{\Gcal}(X)\Ind_{A}=\rho_{\Gcal}(X\Ind_{A})$ whenever $\rho_{\Gcal}(0)=0$ (see \cite[Proposition 1]{DetlefsenScandolo05}  for further details). In particular, $\rho_0$ automatically satisfies ($\Gcal$-PS). 
\\Let  $N\geq 1$, $0\leq \lambda_1,\dots,\lambda_N\leq 1$ and $A_1,\dots,A_N\in\Gcal$ a partition of $\Omega$.  We have for all $X_1,X_2\in \Linfty$ and $\Lambda=\sum_{j=1}^N\lambda_j\Ind_{A_j}$
\begin{align*}
\rho_0\left(\Lambda X_1+\left(1-\Lambda\right)X_2\right)& =  \Ep{\rho_{\Gcal}(\Lambda X_1+\left(1-\Lambda\right))X_2} 
\\ & \leq  \Ep{\Lambda\rho_{\Gcal}( X_1)+\left(1-\Lambda\right)\rho_{\Gcal}(X_2)} 
\\ & =  \sum_{j=1}^N\lambda_j \rho_0(X_1\Ind_{A_j})+\sum_{j=1}^N(1-\lambda_j) \rho_0(X_2\Ind_{A_j}),  
\end{align*}
where in the last equality we applied the property that $\rho_{\Gcal}(X)\Ind_{A}=\rho_{\Gcal}(X\Ind_{A})$ for any $X\in \Linfty$ and $A\in\Gcal$. 

We now show the direct implication. Let $\rho_0:\Linfty\to \R$ satisfy $\rho_0(0)=0$ jointly with conditions in items 1 to 4 of Theorem \ref{scal:RM:prop}. We define the new functional $T:\brv\to\R$ as the map 
$$f\mapsto T(f):= \rho_0(-[f]_{\PW}).$$
The functional $T$ inherits from $\rho_0$ the following properties: $T(0)=0$, and 
$$f_1,f_2\in\brv,f_1(\omega)\leq f_2(\omega)\,\forall\omega\in\Omega\Longrightarrow T(f_1)\leq T(f_2),$$
$$T(f+g)=T(f)+\Ep{g} \quad \forall\; f\in\brv\;g\in \brvG.$$

\noindent $T$ is also ($\Gcal$-QL) since it is linear of $\LinftyG$ (by $\rho_0(0)=0$ and the property in item 1 of Theorem \ref{scal:RM:prop}).
We see then (by \eqref{formulalternativeN} and the discussion leading to it)
$$\mathcal{N}_{\Gcal}=\{A\in \Gcal\mid T(x\ind_A)=0\,\forall x\in\R\}=\{A\in\Gcal\mid \PW(A)=0\}.$$
Moreover $T$ is ($\Gcal$-Mo): for $x,y\in\R$ with $x<y$, all $g\in\brvG$, $A\in\Gcal\setminus\Ncal_\Gcal$ we have $$T(x\ind_A+g\ind_{\Omega\setminus A})=\Ep{x\ind_A+g\ind_{\Omega\setminus A}}< \Ep{y\ind_A+g\ind_{\Omega\setminus A}}=T(y\ind_A+g\ind_{\Omega\setminus A}).$$ 

\noindent Property ($\Gcal$-PC) for $T$ follows immediately from Dominated Convergence Theorem and $T(g)=\Ep{g}$ for any $g\in\brvG$.

\noindent Finally $T$ is ($\Gcal$-PS) by assumption, and ($\Gcal$-NB) as for every $A\in\Gcal$,  for $f\in\brv$,  $-\snorm{f}\ind_A \leq f\ind_A\leq \snorm{f}\ind_A$. This implies by monotonicity above that $T(-\snorm{f}\ind_A)\leq T(f\ind_A)\leq T(\snorm{f}\ind_A)$.

\medskip

The functional $T$ satisfies all the assumption of Theorem \ref{thm:existencecond NEW} and for any $f\in\brv$ we can guarantee the existence of the conditional Chisini mean $\cm{f\middle |\Gcal}$ such that for every $g\in \cm{f\middle |\Gcal}$ we have $T(f\ind_A)=T(g\ind_A)$ for all $A\in\Gcal$.

We now define $\rho_{\Gcal}:\Linfty \to \LinftyG$ by
$$\rho_{\Gcal}(X) := -[g]_{\PW} \quad \text{ where } g\in \cm{f\middle |\Gcal}.$$
First we observe that $\rho_{\Gcal}$ is well defined. On the one hand the definition does not depend on the choice of $g$ as $\PW(g=\tilde g)=1$ for all $g,\tilde g\in \cm{f\middle |\Gcal}$. On the one hand for any $f_1,f_2\in\brv$ such that $\PW(f_1=f_2)$ we have $\cm{f_1\middle |\Gcal} \equiv \cm{f_2\middle |\Gcal}$. Indeed, if by contradiction there existed $g_1\in \cm{f_1\middle |\Gcal}$ and  $g_2\in \cm{f_2\middle |\Gcal}$ such that $\Gcal\setminus \Ncal_\Gcal \ni A=\{g_1> g_2\}$  then $T(f_1\ind_A)=T(g_1\ind_A)=\Ep{g_1\ind_A}> \Ep{g_2\ind_A}= T(g_2\ind_A)= T(f_2\ind_A)$. But $T(f_1\ind_A)=\rho_0(-[f_1\ind_A]_{\PW})= \rho_0(-[f_2\ind_A]_{\PW})= T(f_2\ind_A)$, a contradiction. The case $\{g_1<g_2\}\in\Gcal\setminus \Ncal_\Gcal$ can be excluded in a similar way. On the other hand the definition does not depend on the choice of $g$ as $\PW(g=\tilde g)=1$ for all $g,\tilde g\in \cm{f\middle |\Gcal}$: if this were the case, one could reach a contradiction in the same way as we just did above.

We conclude this first part of our proof by showing that $\rho_{\Gcal}$ is a conditional convex Risk Measure. Before starting with the main properties we notice that 
\begin{equation}
    \label{eq locality rhoG}
\rho_{\Gcal}(X\Ind_A)=\rho_{\Gcal}(X)\Ind_A\,\,\,\PW\text{-a.s} \,\,\,\forall \,X\in\Linfty,\,A\in\Gcal,
\end{equation}
which can be checked by direct verification.
Let $X_1\leq X_2$ $\PW$-a.s. then we find $X_1\ni f_1\leq f_2 \in X_2$ and $g_1\in\cm{f_1|\Gcal},g_2\in\cm{f_2|\Gcal}$ so that for all $A\in\Gcal$ $$\Ep{g_1\ind_A}=T(f_1\ind_A)=\rho_0(-X_1\Ind_A)\leq \rho_0(-X_2\Ind_A)=T(f_2\ind_A)=\Ep{g_2\ind_A}.$$ This implies $\PW(g_1\leq g_2)=1$ i.e. 
$\rho_{\Gcal}(X_1)\geq \rho_{\Gcal}(X_2)$ $\PW$-a.s.. Let now $X\in\Linfty$ and $c\in\LinftyG$. For any $A\in\Gcal$  
$$\Ep{\rho_{\Gcal}(X+c)\Ind_A}=\rho_{0}((X+c)\Ind_A)= \rho_{0}(X\Ind_A)-\Ep{c\Ind_A}=\Ep{(\rho_{\Gcal}(X)-c)\Ind_A},$$
hence $\rho_{\Gcal}(X+c)=\rho_{\Gcal}(X)-c$ $\PW$-a.s..
Notice that this property, as usual when dealing with convex Risk Measures, yields Lipschitz continuity for $\rho_\Gcal$: if $X_1,X_2\in\Linfty$ are given, we have $$\rho_\Gcal(X_1)=\rho_\Gcal(X_1-X_2+X_2)\leq \rho_\Gcal(X_2-\snorm{X_1-X_2})=\rho_\Gcal(X_2)+\snorm{X_1-X_2}$$
and interchanging the roles of $X_1,X_2$ we get $$\snorm{\rho_\Gcal(X_1)-\rho_\Gcal(X_2)}\leq \snorm{X_1-X_2}.$$

Let $X_1,X_2\in \Linfty$ and $\Lambda\in \LinftyG$ with $0\leq \Lambda\leq 1$. We start assuming that $\Lambda$ is simple, in that  $\Lambda=\sum_{j=1}^N\lambda_j\Ind_{A_j}$ $\PW$-a.s. for $0\leq \lambda_1,\dots,\lambda_N\leq 1$ and $A_1,\dots,A_N\in\Gcal$ being a partition of $\Omega$.
\\ Suppose that for some $\varepsilon>0$ we had $\PW(B_\varepsilon)>0$ where \[B_\varepsilon=\left\{\rho_{\Gcal}(\Lambda X_1+(1-\Lambda)X_2)>\Lambda\rho_{\Gcal}(X_1)+(1-\Lambda)\rho_{\Gcal}(X_2)+\varepsilon\right\}\in\Gcal.\]
Then 
\begin{eqnarray*}
&&\rho_0(\Lambda (X_1\Ind_{B_\varepsilon})+(1-\Lambda)(X_2\Ind_{B_\varepsilon}))=\Ep{\rho_{\Gcal}(\Lambda (X_1\Ind_{B_\varepsilon})+(1-\Lambda)(X_2\Ind_{B_\varepsilon}))}\\
&=&\Ep{\rho_{\Gcal}(\Lambda (X_1)+(1-\Lambda)(X_2))\Ind_{B_\varepsilon}}\\
&>& \Ep{\Lambda\rho_{\Gcal} (X_1)\Ind_{B_\varepsilon}+(1-\Lambda)\rho_{\Gcal}(X_2)\Ind_{B_\varepsilon}}
\\& = & \Ep{\Lambda\rho_{\Gcal} (X_1\Ind_{B_\varepsilon})+(1-\Lambda)\rho_{\Gcal}(X_2\Ind_{B_\varepsilon})}
\\ & = &\Ep{\sum_{j=1}^N\lambda_j\Ind_{A_j}\rho_{\Gcal} (X_1\Ind_{B_\varepsilon})+\big(1-\sum_{j=1}^N\lambda_j\Ind_{A_j}\big)\rho_{\Gcal} (X_2\Ind_{B_\varepsilon})}
\\ & = & \sum_{j=1}^N\lambda_j\rho_{0} (X_1\Ind_{B_\varepsilon}\Ind_{A_j})+\sum_{j=1}^N\big(1-\lambda_j\big)\rho_{0} (X_2\Ind_{B_\varepsilon}\Ind_{A_j}) 
\end{eqnarray*}
using \eqref{eq locality rhoG} in the last step.
This is a contradiction, since in \eqref{eq:for convexity} we can choose $X\ind_{B_\varepsilon}$ and $Y\ind_{B_\varepsilon}$. We conclude that
\[\PW\left(\left\{\rho_{\Gcal}(\Lambda X_1+(1-\Lambda)X_2)\leq \Lambda\rho_{\Gcal}(X_1)+(1-\Lambda)\rho_{\Gcal}(X_2)+\varepsilon\right\}\right)=1\]
for every $\varepsilon>0$ which in turns implies $$\rho_{\Gcal}(\Lambda X_1+(1-\Lambda)X_2)\leq \Lambda\rho_{\Gcal}(X_1)+(1-\Lambda)\rho_{\Gcal}(X_2)\quad \PW\text{-a.s. } $$ 
for every $X_1,X_2\in \Linfty$ and simple $\Lambda\in \LinftyG$ with $0\leq \Lambda\leq 1$ $\PW$-a.s..  We can finally remove the requirement for  $\Lambda$ to be simple. Indeed we can find $\Lambda_n\in \LinftyG$, simple and with $0\leq \Lambda_n\leq 1$ $\PW$-a.s., satisfying $\snorm{\Lambda_n-\Lambda}\to 0$. Then 
\begin{align*}
   &\snorm{\Lambda_n X_1+(1-\Lambda_n)X_2-(\Lambda X_1+(1-\Lambda)X_2)}\to 0, \\
   &\snorm{\Lambda_n \rho_\Gcal(X_1)+(1-\Lambda_n)\rho_\Gcal(X_2)-(\Lambda \rho_\Gcal(X_1)+(1-\Lambda)\rho_\Gcal(X_2))}\to 0.
\end{align*}
By Lipschitz continuity of $\rho_\Gcal$ 
\begin{equation}\label{limit:A}\rho_\Gcal((\Lambda_n X_1+(1-\Lambda_n)X_2))\to \rho_\Gcal((\Lambda X_1+(1-\Lambda)X_2))\end{equation}
and  since we have proved that for every $n$
$$\rho_{\Gcal}(\Lambda_n X_1+(1-\Lambda_n)X_2)\leq \Lambda_n\rho_{\Gcal}(X_1)+(1-\Lambda_n)\rho_{\Gcal}(X_2) \to \Lambda \rho_\Gcal(X_1)+(1-\Lambda)\rho_\Gcal(X_2)$$
by \eqref{limit:A} we have the desired property.                                                                                                                       

\end{proof}

\bibliographystyle{abbrv} 
 \bibliography{cas-refs.bib}

\begin{thebibliography}{10}

\bibitem{CVMM}
S.~Cerreia-Vioglio, F.~Maccheroni, M.~Marinacci, and L.~Montrucchio.
\newblock Risk measures: rationality and diversification.
\newblock {\em Math. Finance}, 21(4):743--774, 2011.

\bibitem{Ch29}
O.~Chisini.
\newblock Sul concetto di media.
\newblock {\em Periodico di Matematiche}, 4:106--116, 1929.

\bibitem{CoquetPeng02}
F.~Coquet, Y.~Hu, J.~M\'{e}min, and S.~Peng.
\newblock Filtration-consistent nonlinear expectations and related
  {$g$}-expectations.
\newblock {\em Probab. Theory Related Fields}, 123(1):1--27, 2002.

\bibitem{deF31}
B.~de~Finetti.
\newblock Sul concetto di media.
\newblock {\em Giornale dell'Istituto Italiano degli Attuari}, 2:298--396,
  1931.

\bibitem{De60}
G.~Debreu.
\newblock Topological methods in cardinal utility theory.
\newblock In {\em Mathematical methods in the social sciences 1959}, pages
  16--26. Stanford Univ. Press, Stanford, Calif., 1960.

\bibitem{DenisHuPeng11}
L.~Denis, M.~Hu, and S.~Peng.
\newblock Function spaces and capacity related to a sublinear expectation:
  application to {$G$}-{B}rownian motion paths.
\newblock {\em Potential Anal.}, 34(2):139--161, 2011.

\bibitem{DenisMartini06}
L.~Denis and C.~Martini.
\newblock A theoretical framework for the pricing of contingent claims in the
  presence of model uncertainty.
\newblock {\em Ann. Appl. Probab.}, 16(2):827--852, 2006.

\bibitem{DetlefsenScandolo05}
K.~Detlefsen and G.~Scandolo.
\newblock Conditional and dynamic convex risk measures.
\newblock {\em Finance Stoch.}, 9(4):539--561, 2005.

\bibitem{FS11}
H.~F\"{o}llmer and A.~Schied.
\newblock {\em Stochastic finance}.
\newblock De Gruyter Graduate. De Gruyter, Berlin, 2016.
\newblock An introduction in discrete time, Fourth edition.

\bibitem{Ko30}
A.~N. Kolmogorov.
\newblock Sur la notion de moyenne.
\newblock {\em Atti Accademia Nazionale dei Lincei}, 9:388--391, 1930.

\bibitem{MP93}
P.~Muliere and G.~Parmigiani.
\newblock Utility and means in the 1930s.
\newblock {\em Statist. Sci.}, 8(4):421--432, 1993.

\bibitem{Na31}
M.~Nagumo.
\newblock On mean values.
\newblock {\em Tokio Buturigakko-Zassi}, 40:520--527, 1931.

\bibitem{Peng04}
S.~Peng.
\newblock Nonlinear expectations, nonlinear evaluations and risk measures.
\newblock In {\em Stochastic methods in finance}, volume 1856 of {\em Lecture
  Notes in Math.}, pages 165--253. Springer, Berlin, 2004.

\bibitem{Rudin87}
W.~Rudin.
\newblock {\em Real and complex analysis}.
\newblock McGraw-Hill Book Co., New York, third edition, 1987.

\bibitem{WZ99}
P.~P. Wakker and H.~Zank.
\newblock State dependent expected utility for {S}avage's state space.
\newblock {\em Math. Oper. Res.}, 24(1):8--34, 1999.

\end{thebibliography}

\end{document}